\numberwithin{equation}{section}
\newtheorem{thm}{Theorem}[section]
\newtheorem{prop}[thm]{Proposition}
\newtheorem{lem}[thm]{Lemma}
\newtheorem{cor}[thm]{Corollary}
\theoremstyle{remark}
\newtheorem{rem}[thm]{Remark}
\theoremstyle{assumption}
\newcommand{\BBB}{\mathbb}
\newcommand{\R}{{\BBB R}}
\newcommand{\N}{{\BBB N}}
\newcommand{\lec}{{\ \lesssim \ }}
\newcommand{\cross}{\times}
\newcommand{\be}{\beta}
\newcommand{\si}{\sigma}
\newcommand{\ta}{\tau}
\newcommand{\x}{\xi}
\newcommand{\y}{\eta}
\newcommand{\La}{\Lambda}
\newcommand{\Om}{\Omega}
\newcommand{\F}{\mathcal{F}}
\newcommand{\ti}{\widetilde}
\newcommand{\ha}{\widehat}
\newcommand{\hs}{\hspace}
\title[Operator splitting for dispersion-generalized Benjamin-Ono equations]{Operator splitting for dispersion-generalized Benjamin-Ono equations
}
\author[Takanobu Tokumasu]{Takanobu Tokumasu}
\address[]{Graduate School of Mathematics, Nagoya University,
Chikusa-ku, Nagoya, 464-8602, Japan}
\begin{document}

\begin{abstract}
We consider the operator splitting for a class of nonlinear equation, which includes the KdV equation, the Benjamin-Ono equation, and the Burgers equation. We prove a first-order approxomation in $\Delta t$ in the Sobolev space for the Godunov splitting, and second-order approximation for the Strang splitting.
\end{abstract}
\maketitle
\setcounter{page}{001}

\begin{section}{Introduction}
In this paper, we consider the operator splitting of
\begin{equation}
\begin{cases}
\partial_{t} u + u u_{x} - Ku = 0, \hs{15pt}  (x,t) \in \R \times [0,T],  \\
u(\cdot,0) = u_{0} \in H^{s} (\R)
\end{cases}
\label{eq:DBO}
\end{equation}
where $K = \F^{-1}[k(\x)\F]$ is a Fourier multiplier, and $u_{0}$ and $u$ are $\R$-valued. Throughout this paper, we assume that $p \in [0,\infty)$ and $k$ satisfies
\begin{align}
\Re(k(\x)) \leq 0 \, ,  \hs{15pt} k(-\x) = \overline{k(\x)} \, ,  \hs{15pt} | k(\x) | \lec \langle \x \rangle^{p} \, ,\label{K1} \\
|(\x + \y)k(\x + \y) - \y k(\y) - \x k(\x)| \lec |\x| \langle \y \rangle^{p} + |\y| \langle \x \rangle^{p} .
\label{K2}
\end{align}
for all $\x \, , \, \y \in \R$, where $\langle \x \rangle = (1 + |\x|^{2})^{1/2}$. Note that \eqref{K2} is satisfied if $k \in C^{1}(\R)$. Here we give some examples. When $k(\x) = - i \x^{3}$, \eqref{eq:DBO} is the Korteweg-de Vries (KdV) equation. When $k(\x) = i \x |\x|$, \eqref{eq:DBO} is the Benjamio-Ono equation. When $k(\x) = - \x^{2}$, \eqref{eq:DBO} is the Burgers equation.
The following are the case $p$ is a fractional number. The extended Whitham equation, which is a model for surface water waves (see \cite{Dutykh} and \cite{Saut}), is
\begin{equation*}
u_{t} + u u_{x} - \int_{\R} e^{i x \x} (1 + \be |\x|^{2})^{\frac{1}{2}} \Big{(} \frac{\tanh \x}{\x} \Big{)}^{\frac{1}{2}}  i \xi \ha{u}(\x) d \x = 0,
\end{equation*}
where $\be$ is a measure of surface tension. (see \cite{Saut}) When $\be > 0$, this satisfies \eqref{K1} and \eqref{K2} with $p=3/2$. The case $\be = 0$ is the case of no surface tension and called the Whitham equation (see \cite{Dutykh}), and also the model for purely gravitational waves (see \cite{Saut}), and this satisfies \eqref{K1} and \eqref{K2} with $p = 1/2$.

The operator splitting method is very popular for numerical computing, and this is explained formally as follows.
Let $\Phi_{C}[t_{0}]u_{0} \in X$, where $X$ is some normed space, denotes the solution of the differential equation $\partial_{t}u = C(u)$ at $t= t_{0}$, where $u_{0}$ is the initial value. Typical $C$ includes a differential operator in the spatial variables.
Here, we assume that $C = A + B$. In our setting, $A = K$ and $B(u) = - u u_{x}$. In the Godunov operator splitting, the approximate solution is
\begin{equation}
u(t_{n}) \approx \Big{(} \Phi_{A}(\Delta t) \circ \Phi_{B}(\Delta t) \Big{)}^{n} u_{0}, \nonumber
\end{equation}
where $n \in \N$, $\Delta t \ll 1$, and $t_{n} = n \Delta t$. In the Strang operator splitting, the approximate solution is
\begin{equation}
u(t_{n}) \approx \Big{(} \Phi_{A}(\frac{\Delta t}{2}) \circ \Phi_{B}(\Delta t)  \circ \Phi_{A}(\frac{\Delta t}{2}) \Big{)}^{n} u_{0}. \nonumber
\end{equation}

Let $N = N(\Delta t, T)$ be the largest integer such that $N \Delta t \leq T$ for given $T>0$ and $\Delta t > 0$. Let $n \in \N$ satisfy $1 \leq n \leq N$. The Godunov splitting for \eqref{eq:DBO} is defined in $(x,t,\ta) \in \R \cross \Pi_{G}^{(n)}$ as
\begin{numcases}
{}
\partial_{t} v + v v_{x} = 0,  \hs{15pt} x \in \R \, , \, ( t,\ta) \in \Sigma_{G,1}^{(n)},
\label{eq:B} \\
\partial_{\tau} v - Kv = 0, \hs{15pt} x \in \R \, , \, (t,\ta) \in \Sigma_{G,2}^{(n)},
\label{eq:L} \\
v (0,0) = u_{0} \in H^{s}(\R), \label{IVL}
\end{numcases}
where $\Pi_{G}^{(n)} = \Sigma_{G,1}^{(n)} \cup \Sigma_{G,2}^{(n)}$ and 
\begin{equation}
\Sigma_{G,1}^{(n)} = \bigcup_{l = 1}^{n} \Om_{1}^{(l)} \, , \, \Sigma_{G,2}^{(n)} = \bigcup_{l = 1}^{n} \Om_{2}^{(l)}, \nonumber
\end{equation}
and
\begin{equation}
\Om_{1}^{(n)} = ( t_{n-1} , t_{n} ] \cross \{ t_{n-1} \} \, , \, \Om_{2}^{(n)} = [ t_{n-1} , t_{n} ] \cross ( t_{n-1} ,t_{n} ]. \nonumber
\end{equation}
The Strang splitting for \eqref{eq:DBO} is defined in $(x,t,\tau) \in \R \cross \Pi_{S}^{(n)}$ as
\begin{numcases}
{}
\partial_{t} v + v v_{x} = 0, \hs{15pt} x \in \R \, , \, (t,\ta) \in \Sigma_{S,1}^{(n)},
\label{eq:SB} \\
\partial_{\tau} v - Kv = 0, \hs{15pt} x \in \R \, , \, (t,\ta) \in \Sigma_{S,2}^{(n)},
\label{eq:SL} \\
v(0,0) = u_{0} \in H^{s} (\R), \label{IVS}
\end{numcases}
where $\Pi_{S}^{(n)} = \Sigma_{S,1}^{(n)} \cup \Sigma_{S,2}^{(n)}$ and
\begin{equation}
\Sigma_{S,1}^{(n)} = \bigcup_{l = 1}^{n} \Big{(} \Om_{1,1}^{(l)} \cup \Om_{1,2}^{(l)} \Big{)} \, , \, \Sigma_{S,2}^{(n)} = \bigcup_{l = 1}^{n} \Big{(} \Om_{2,1}^{(l)} \cup \Om_{2,2}^{(l)} \Big{)}. \nonumber
\end{equation}
and
\begin{align}
\Om_{1,1}^{(n)} = ( t_{n-1} , t_{n- 1/2} ] \cross \{ t_{n-1} \} \, , \, \Om_{1,2}^{(n)} = ( t_{n- 1/2} , t_{n} ] \cross [ t_{n- 1/2} , t_{n} ] \, , \nonumber \\
\Om_{2,1}^{(n)} = [ t_{n-1} , t_{n- 1/2} ] \cross ( t_{n-1} ,t_{n- 1/2} ] \, , \, \Om_{2,2}^{(n)} = \{ t_{n- 1/2} \} \cross ( t_{n- 1/2} , t_{n} ], \nonumber
\end{align}
where $t_{n - 1/2} = (n - 1/2) \Delta t$ for $n \in \N$.

Since the splitting method requires the existence of the solution for the full equation (\ref{eq:DBO}), we give some known results. For the KdV equation, Colliander-Keel-Steffilani-Takaoka-Tao proved global-wellposedness (GWP) in $H^{s}$ for $s > -3/4$ (see \cite{Colliander}) and Kishimoto proved GWP in $H^{-3/4}$ (see \cite{Kishimoto}). For the Benjamin-Ono equation, Tao proved GWP in $H^{1}$ (see \cite{Tao2}) and Ionescu-Kenig proved  GWP in $L^{2}$ (see \cite{Ionescu}). For $k(\x) = -i \x |\x|^{p-1}$, Guo proved the local-wellposedness (LWP) in $H^{3-p}$ for $p \in [2,3]$ and the GWP in $H^{(p-1)/2}$ for $p \in (7/3, 3]$ are also proved in \cite{Guo}.

On the other hand, we have no results for the the global existence of the solution for the full equation (\ref{eq:DBO}) for general $k$. Throughout this paper, we assume that there exists a solution $u \in C([0,T]:H^{s})$ of (\ref{eq:DBO}) and a constant $C_{0} > 0$ satisfies
\begin{equation}
\sup_{t \in [0,T]} \| u(t) \|_{H^{s}(\R)} \leq C_{0}. \label{bdd-u}
\end{equation}

The splitting method also requires the solvability for (\ref{eq:B}), (\ref{eq:L}), (\ref{eq:SB}), and (\ref{eq:SL}). The global existence for the linear equations are obvious. In \cite{Kato} Kato proved the existence and the uniqueness of a local solution for a class of nonlinear equations, which includes the nonlinear equations, (\ref{eq:B}) and (\ref{eq:SB}). (Theorem \ref{thm:Burger})

In this paper, we prove two types of the error estimate. These proofs are based on the method used in \cite{Tao}. The first is the first-order approximation in $\Delta t$ for the Godunov splitting, that is the following.
\begin{thm} \label{thm:L}
Let $T > 0$ and $s > 3/2 + \max \{1,p \}$. Assume that $u \in C([0,T]:H^{s}(\R))$ satisfies $(\ref{eq:DBO})$ on $[0,T]$ and $C_{0}>0$ satisfies $(\ref{bdd-u})$. Then there exists $\overline{\Delta t}= \overline{\Delta t}(C_{0}, s, p, T) > 0$ such that for all $\Delta t \in [0,\overline{\Delta t}]$, there exists a unique solution $v \in C(\Pi_{G}^{(N)} : H^{s})$ of $(\ref{eq:B})$--$(\ref{IVL})$. In addition, there exists $C = C (C_{0}, s, p, T) >0 $ such that
\begin{equation}
\sup_{ t \in [0,N \Delta t] } \| v(t,t) - u(t) \|_{H^{s - \max \{ 1,p \}}} \leq C \Delta t. \nonumber
\end{equation}
\end{thm}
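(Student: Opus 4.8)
The plan is to run the standard \emph{consistency $+$ stability} argument for operator splitting, working in two scales: $H^{s}$ for a priori bounds on both the exact solution $u$ and the split solution $v$, and $H^{\sigma}$ with $\sigma:=s-\max\{1,p\}$ for the error itself. The hypothesis $s>3/2+\max\{1,p\}$ is precisely $\sigma>3/2$, which I will use repeatedly: it makes $H^{\sigma}$ a Banach algebra, gives $H^{\sigma}\hookrightarrow W^{1,\infty}$, and supports the Kato--Ponce-type commutator estimates for the inviscid Burgers flow. Since $\Re k\le 0$, the propagator $e^{\tau K}$ ($\tau\ge 0$) is a contraction on every $H^{r}$, and $\|Ku\|_{H^{r}}\lesssim\|u\|_{H^{r+p}}$ by \eqref{K1}; by Theorem~\ref{thm:Burger} the Burgers flow $\Phi_{B}(\tau)$ exists on $H^{s}$ for a time depending only on the $H^{s}$-size of the datum, satisfies $\frac{d}{d\tau}\|\Phi_{B}(\tau)\phi\|_{H^{s}}\lesssim\|\Phi_{B}(\tau)\phi\|_{H^{\sigma}}\|\Phi_{B}(\tau)\phi\|_{H^{s}}$, and is Lipschitz in $H^{\sigma}$ with constant controlled by the $H^{s}$-sizes of its two data. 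First I would combine these with a continuity argument anchored on \eqref{bdd-u}: once $\|v\|_{H^{\sigma}}$ is known to stay bounded, the $H^{s}$ energy inequality above is \emph{linear} in $\|v\|_{H^{s}}$, so over the $N\sim T/\Delta t$ sub-steps the $H^{s}$-norm of $v$ grows by at most $e^{CT}$; this yields $\overline{\Delta t}$ and $C_{1}=C_{1}(C_{0},s,p,T)$ with the existence and uniqueness of $v$ on $\Pi_{G}^{(N)}$ and $\sup_{\Pi_{G}^{(N)}}\|v\|_{H^{s}}\le C_{1}$.

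Next, and this is the technical heart, I would prove the one-step error bound: for $\|\phi\|_{H^{s}}\le C_{1}$ and $0<h\le\overline{\Delta t}$,
\[
\big\|\,e^{hK}\Phi_{B}(h)\phi-\Phi_{A+B}(h)\phi\,\big\|_{H^{\sigma}}\lesssim h^{2}.
\]
Writing $u(\tau)=\Phi_{A+B}(\tau)\phi$, $w(\tau)=\Phi_{B}(\tau)\phi$, $B(f)=-ff_{x}$ and $B'(u)[h]=-\partial_{x}(uh)$, Duhamel's formula for each flow gives
\[
e^{hK}\Phi_{B}(h)\phi-\Phi_{A+B}(h)\phi=\int_{0}^{h}\big(e^{hK}B(w(\tau))-e^{(h-\tau)K}B(u(\tau))\big)\,d\tau ,
\]
whose integrand vanishes at $\tau=0$; hence the left-hand side equals $\int_{0}^{h}\!\int_{0}^{s}\partial_{s}\big(e^{hK}B(w(s))-e^{(h-s)K}B(u(s))\big)\,ds\,d\tau$, and it suffices to bound the inner derivative in $H^{\sigma}$ by a constant depending only on $C_{1},s,p$. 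Using $\partial_{s}w=B(w)$ and $\partial_{s}u=Ku+B(u)$, that derivative is
\[
\underbrace{e^{(h-s)K}\big(KB(u(s))-B'(u(s))[Ku(s)]\big)}_{\text{(i) commutator term}}\;+\;\underbrace{\big(e^{hK}B'(w(s))[B(w(s))]-e^{(h-s)K}B'(u(s))[B(u(s))]\big)}_{\text{(ii) Burgers--Burgers term}},
\]
the second of which again vanishes at $s=0$ and therefore carries an extra power of $s$.

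The commutator term is exactly where \eqref{K2} enters. With $P:=K\partial_{x}$ and $R(f,g):=P(fg)-fP(g)-gP(f)$, the elementary identity $KB(u)-B'(u)[Ku]=u_{x}\,Ku-\tfrac12 R(u,u)$ turns an object that naively has order $p+1$ into one of order $\max\{1,p\}$: the bilinear symbol of $R$ is $i\big((\xi+\eta)k(\xi+\eta)-\eta k(\eta)-\xi k(\xi)\big)$, so by \eqref{K2} it is $\lesssim|\xi|\langle\eta\rangle^{p}+|\eta|\langle\xi\rangle^{p}$, whence $\|R(u,u)\|_{H^{\sigma}}\lesssim\|u\|_{H^{s}}^{2}$; and $\|u_{x}\,Ku\|_{H^{\sigma}}\lesssim\|u_{x}\|_{H^{s-1}}\|Ku\|_{H^{s-p}}\lesssim\|u\|_{H^{s}}^{2}$ because $\min(s-1,s-p)=\sigma>1/2$. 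For the Burgers--Burgers term I would use its divergence form $B'(v)[B(v)]=\tfrac13\partial_{x}^{2}(v^{3})$, the elementary semigroup bound $|e^{sk(\xi)}-1|\lesssim\min(1,s\langle\xi\rangle^{p})$ for the differences $e^{hK}-e^{(h-s)K}$, and the $H^{\sigma}$-Lipschitz estimate $\|w(s)-u(s)\|_{H^{\sigma}}\lesssim s$ — this is the place where the balance $\sigma+\max\{1,p\}=s$ is exactly used up. Integrating both terms over $0\le s\le\tau\le h$ produces the claimed $h^{2}$ bound.

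The global estimate is then a telescoping/discrete-Gronwall argument. With $E(t_{n})=v(t_{n},t_{n})-u(t_{n})$ and $\phi_{n-1}=v(t_{n-1},t_{n-1})$, write
\[
E(t_{n})=\Big(e^{\Delta tK}\Phi_{B}(\Delta t)\phi_{n-1}-e^{\Delta tK}\Phi_{B}(\Delta t)u(t_{n-1})\Big)+\Big(e^{\Delta tK}\Phi_{B}(\Delta t)u(t_{n-1})-\Phi_{A+B}(\Delta t)u(t_{n-1})\Big);
\]
the first piece is $\le(1+C\Delta t)\|E(t_{n-1})\|_{H^{\sigma}}$ by the contraction of $e^{\Delta tK}$ and the $H^{\sigma}$-Lipschitz bound for $\Phi_{B}(\Delta t)$, the second is $\le C'\Delta t^{2}$ by the local error estimate. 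Since $E(t_{0})=0$ and $N\Delta t\le T$, discrete Gronwall gives $\|E(t_{n})\|_{H^{\sigma}}\le C'e^{CT}\Delta t$; the same estimate at the intermediate diagonal points $(t,t)\in\Om_{2}^{(n)}$, where $v(t,t)=e^{hK}\Phi_{B}(h)\phi_{n-1}$ with $h=t-t_{n-1}$, yields the asserted supremum bound, and feeding $\|v-u\|_{H^{\sigma}}\lesssim\Delta t$ back into Step~1 closes the bootstrap for $\sup\|v\|_{H^{\sigma}}$ (hence for $\sup\|v\|_{H^{s}}$), provided $\overline{\Delta t}$ is small. I expect the main obstacle to be the local error estimate — specifically tracking the derivative losses so that \emph{exactly} $\max\{1,p\}$ derivatives are lost: in the commutator term this is forced and rests essentially on \eqref{K2}, while in the Burgers--Burgers term it requires a careful balancing of the semigroup bound against the regularity available from $H^{s}$. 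The a priori/bootstrap argument and the final summation are comparatively routine.
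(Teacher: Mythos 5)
Your overall architecture is the classical one for splitting schemes --- a one-step (local) error bound of order $h^{2}$, Lipschitz stability of the numerical flow in $H^{\sigma}$, and a telescoping/discrete Gronwall summation --- and your commutator term (i) is exactly the object the paper calls $X$: the identity $KB(u)-B'(u)[Ku]=u_{x}Ku-\tfrac12R(u,u)$ with $R$ controlled by \eqref{K2} is the same computation as in Lemma \ref{lem:bGWL2}, and it correctly costs only $\max\{1,p\}$ derivatives. The genuine gap is in your Burgers--Burgers term (ii). There you must bound $e^{hK}B'(w)[B(w)]-e^{(h-s)K}B'(u)[B(u)]$ in $H^{\sigma}$, and $B'(w)[B(w)]=\tfrac13\partial_{x}^{2}(w^{3})$ is a \emph{second-order} differential expression: every estimate you propose for it (the crude bound $\|\partial_{x}^{2}(w^{3})\|_{H^{\sigma}}\lesssim\|w\|_{H^{\sigma+2}}^{3}$, the semigroup difference $|e^{sk(\xi)}-1|\lesssim\min(1,s\langle\xi\rangle^{p})$ which only trades the factor $s$ for $p$ more derivatives, or the Lipschitz bound on $w-u$ which must then be taken in $H^{\sigma+2}$) requires $\sigma+2\le s$, i.e.\ $\max\{1,p\}\ge 2$. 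For $p<2$ --- which includes the Whitham ($p=1/2$) and extended Whitham ($p=3/2$) examples the theorem is designed to cover --- your local error estimate only closes in $H^{s-\max\{2,p\}}$, which is strictly weaker than the claimed $H^{s-\max\{1,p\}}$. This is not a detail you can patch by rebalancing: the second $s$-derivative of the Lie commutator unavoidably produces $\partial_{x}^{2}$ applied to a cubic, and in a pure norm bound on the one-step defect there is nothing to integrate by parts against.

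The paper avoids this by never differentiating the nonlinear flow twice in time. It derives a PDE for the error $w(t)=v(t,t)-u(t)$ along the diagonal, $w'=-ww_{x}-(uw)_{x}+Kw+F$ with forcing $F=v_{t}+vv_{x}$ (Lemma \ref{lem:bGWL1}); the quadratic terms $ww_{x}$ and $(uw)_{x}$ are absorbed by the Kato--Ponce cancellation $|\langle w,ww_{x}+(uw)_{x}\rangle_{H^{s_{1}}}|\lesssim\|w\|_{H^{s_{1}}}^{2}(\|w\|_{H^{s_1}}+\|u\|_{H^{s_{1}+1}})$ of Proposition \ref{prop:ibp}, costing no derivatives on $w$, while $F$ vanishes at the start of each step and evolves only by $\partial_{\tau}F-KF=X$, so $\|F\|_{H^{s_{1}}}\lesssim\Delta t$ costs only the $\max\{1,p\}$ derivatives hidden in $X$ (Lemma \ref{lem:bGWL2}). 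In other words, the energy pairing supplies exactly the integration by parts that the telescoping framework lacks. Your a priori/bootstrap step and the final summation are sound (and match Lemma \ref{lem:LHs} and Corollary \ref{cor:rmL}), and your argument does prove the theorem in the regime $p\ge 2$ (KdV, Benjamin--Ono, Burgers); but as written it does not establish the stated result for general $p\ge 0$. To repair it you would either have to accept the larger loss $\max\{2,p\}$, or replace the norm bound on the one-step defect by an energy estimate on the error equation --- at which point you have essentially reconstructed the paper's proof.
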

\begin{rem}\label{rem:rev}
Theorem \ref{thm:L} is also true for the Cauchy problem of
\begin{numcases}
{}
\partial_{t} v - Kv = 0,  \hs{15pt} x \in \R \, , \, (t,\ta) \in \Sigma_{G,1}^{(n)},
\nonumber \\
\partial_{\ta} v + v v_{x} = 0, \hs{15pt} x \in \R \, , \, (t,\ta) \in \Sigma_{G,2}^{(n)},
\nonumber \\
v (0,0) = u_{0} \in H^{s}(\R). \nonumber
\end{numcases}
The proof is the same as that of Theorem \ref{thm:L}.
\end{rem}
The second is the second-order approximation in $\Delta t$ for the Strang splitting, that is the following.
\begin{thm}\label{thm:S}
Let $T > 0$ and $s > 3/2 + 3 \max \{1,p \}$. Assume that $u \in C([0,T]:H^{s}(\R))$ satisfies $(\ref{eq:DBO})$ on $[0,T]$ and $C_{0} > 0$ satisfies $(\ref{bdd-u})$. Then there exists $\overline{\Delta t} = \overline{\Delta t}( C_{0}, s, p, T) > 0$ such that for all $\Delta t \in [0,\overline{\Delta t}]$, there exists a unique solution $v \in C(\Pi_{S}^{(N)} : H^{s})$ of the Cauchy problem $(\ref{eq:SB})$--$(\ref{IVS})$. In addition, there exists $C = C (C_{0}, s, p, T) >0 $ such that
\begin{equation}
\sup_{ t \in [0,N \Delta t] } \| v(t,t) - u(t) \|_{H^{s - 3 \max \{ 1,p \}}} \leq C (\Delta t)^{2}. \nonumber
\end{equation}
\end{thm}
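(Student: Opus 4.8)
The plan is to follow the scheme of Theorem~\ref{thm:L} verbatim, replacing the single-commutator consistency analysis of the Godunov scheme by a double-commutator one adapted to the symmetric Strang step. Write $\mu=\max\{1,p\}$; let $\Phi_B(h)$ be the time-$h$ flow of $\partial_\sigma w+ww_x=0$, locally well posed in $H^\sigma$ for $\sigma>3/2$ by Theorem~\ref{thm:Burger}, and $\Phi_A(h)=e^{hK}$, which by \eqref{K1} (notably $\Re k\le 0$) is a contraction on every $H^\sigma$. Unwinding the regions $\Omega_{i,j}^{(n)}$ shows that the diagonal values obey $v(t_n,t_n)=\Psi^n u_0$, where $\Psi$ is a symmetric one-step Strang operator built from $\Phi_A(\tfrac{\Delta t}{2})$, $\Phi_B(\Delta t)$, $\Phi_A(\tfrac{\Delta t}{2})$ (consecutive linear half-steps of adjacent cells fusing, by $e^{\tfrac{\Delta t}{2}K}e^{\tfrac{\Delta t}{2}K}=e^{\Delta t K}$); the only structural facts I will use are this symmetry and that collapsing either sub-flow to the identity turns $\Psi$ into the exact flow $\Phi_B(\Delta t)$, resp. $e^{\Delta t K}$, of the other. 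For $t$ interior to a cell $v(t,t)$ is a partial Strang step, so it is enough to estimate the grid-point iterates.

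The argument is a continuity/bootstrap induction on $n$ carrying the two hypotheses $\|v(t_n,t_n)\|_{H^s}\le C_1$ and $E_n:=\|v(t_n,t_n)-u(t_n)\|_{H^{s-3\mu}}\le C_2(\Delta t)^2$, with $C_1,C_2$ depending only on $C_0,s,p,T$ and $\Delta t\le\overline{\Delta t}$ small; both are trivial at $n=0$. The engine is a \emph{consistency} estimate
\[
\|\Psi u(t_{n-1})-u(t_n)\|_{H^{s-3\mu}}\le C_3\,(\Delta t)^3 ,
\]
which I would prove by Taylor-expanding $\Phi_A$, $\Phi_B$ and the exact flow of \eqref{eq:DBO} in $\Delta t$ through second order — the successive time derivatives of the nonlinear flow $\Phi_B$ being controlled in $H^{s},H^{s-1},H^{s-2},H^{s-3}$ by differentiating its equation and using Theorem~\ref{thm:Burger}, and those of the exact solution likewise in $H^{s},\dots,H^{s-3\mu}$ using \eqref{K1}. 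The $O(1)$, $O(\Delta t)$ and $O((\Delta t)^2)$ terms coincide: the pure-$K$ and pure-Burgers parts agree identically (collapsing the other sub-flow makes $\Psi$ exact), and the mixed $O((\Delta t)^2)$ parts cancel by the symmetry of the arrangement — this is exactly where Strang gains its second order and Godunov does not. What survives at order $(\Delta t)^3$ are iterated commutators of $K$ against the Burgers nonlinearity together with a self-interaction term of $\Phi_B$; here \eqref{K2} is the decisive input, since it says precisely that the commutator of $K$ with the first-order Burgers nonlinearity costs only $\mu$ derivatives rather than the naive $1+p$, so that, combined with \eqref{K1}, the algebra property of $H^\sigma$ for $\sigma>1/2$ (available as $s-3\mu>3/2$) and $\|u(t_{n-1})\|_{H^s}\le C_0$, each third-order term lies in $H^{s-3\mu}$ with the stated bound.

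The second ingredient is a \emph{stability} estimate: if $\|w_1\|_{H^{s-3\mu}},\|w_2\|_{H^{s-3\mu}}\le C_0+1$ then $\|\Psi w_1-\Psi w_2\|_{H^{s-3\mu}}\le(1+C_4\Delta t)\|w_1-w_2\|_{H^{s-3\mu}}$, from the $H^{s-3\mu}$-contractivity of $e^{\Delta t K}$ and, for the Burgers half-steps, the standard energy estimate for the difference of two solutions (a Kato--Ponce commutator estimate together with $\|\partial_x(\cdot)\|_{L^\infty}\lesssim\|\cdot\|_{H^{s-3\mu}}$, using $s-3\mu>3/2$), which lets the $H^{s-3\mu}$-difference grow by at most $\exp(C\Delta t\sup\|\cdot\|_{H^{s-3\mu}})$. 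Splitting
\[
v(t_n,t_n)-u(t_n)=\bigl(\Psi v(t_{n-1},t_{n-1})-\Psi u(t_{n-1})\bigr)+\bigl(\Psi u(t_{n-1})-u(t_n)\bigr),
\]
the hypothesis $E_{n-1}\le C_2(\Delta t)^2\le 1$ and $\|u(t_{n-1})\|_{H^s}\le C_0$ make stability applicable to the first bracket while consistency bounds the second, so $E_n\le(1+C_4\Delta t)E_{n-1}+C_3(\Delta t)^3$, whence $E_n\le e^{C_4T}NC_3(\Delta t)^3\le e^{C_4T}C_3T(\Delta t)^2$, re-closing $E_n\le C_2(\Delta t)^2$. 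For the $H^s$ bound one uses that, $E_{n-1}$ being this small, the Burgers half-steps inside $\Psi$ (started from $v(t_{n-1},t_{n-1})\in H^s$) stay, by Theorem~\ref{thm:Burger}, smooth in $H^s$ over a time $\gtrsim C_1^{-1}\gg\Delta t$ with $\|\partial_x(\cdot)\|_{L^\infty}\lesssim C_0+1$ throughout (they remain $H^{s-3\mu}$-close to $u$), so $\|v(t_n,t_n)\|_{H^s}\le(1+C(C_0+1)\Delta t)\|v(t_{n-1},t_{n-1})\|_{H^s}$, and iterating from $\|u_0\|_{H^s}\le C_0$ gives $\|v(t_n,t_n)\|_{H^s}\le C_0e^{C(C_0+1)T}=:C_1$; since this rate is independent of $C_1$ the induction is not circular. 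This closes the bootstrap, yields existence and uniqueness of $v\in C(\Pi_S^{(N)}:H^s)$ (uniqueness from that of each sub-problem), and the bound $\sup_{0\le t\le N\Delta t}\|v(t,t)-u(t)\|_{H^{s-3\mu}}\le C(\Delta t)^2$, interior $t$ being covered as they correspond to a truncated step.

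I expect the consistency estimate to be the crux. Making the second-order expansion of the nonlinear Burgers flow rigorous on the $H^\sigma$-scale is routine but tedious; the genuinely delicate point is the algebraic verification that the symmetric arrangement annihilates \emph{every} term of $\Psi u(t_{n-1})-u(t_n)$ through $O((\Delta t)^2)$ — the bookkeeping of the mixed $K$/Burgers interaction terms — so that only the $O((\Delta t)^3)$ commutator remainder survives, and then bounding that remainder sharply enough, via \eqref{K2}, to lose no more than $3\mu$ derivatives. The discrete Gronwall iteration and the a priori $H^s$ bound are then standard.
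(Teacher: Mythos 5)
Your overall architecture is the classical one-step ``consistency plus stability plus discrete Gronwall'' scheme, which is genuinely different from the paper's route: the paper never forms a one-step operator $\Psi$, but instead works with the two-parameter function $v(t,\ta)$, introduces the defects $F=v_t+vv_x$ and $G=v_\ta-Kv$ from \eqref{eq:ftf}--\eqref{eq:ftg}, derives the error equation \eqref{eq:w'} for $w(t)=v(t,t)-u(t)$ driven by $H=F+G$, and then extracts the second order by estimating the half-step average $w+\ti w$ and the half-step difference $w-\ti w$ separately (\eqref{eq:ave}, \eqref{eq:dif}). In that framework the Strang cancellation is not a formal BCH symmetry argument: it is the concrete identity $F_\ta(t_{l-1})+G_t(t_{l-1\pm 1/2})=X(t_{l-1})-X(t_{l-1\pm 1/2})$ from Remark \ref{rem:FXG}, combined with the Lipschitz estimate $\|X(t_1)-X(t_2)\|_{H^{s_2}}\le C|t_1-t_2|$ of Lemma \ref{lem:Lip}; the leading terms do not vanish individually but their sum is a difference of the same commutator-type quantity $X=-\{K(v^2)_x/2-v_xKv-vKv_x\}$ at two nearby times. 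The paper also re-uses the first-order Godunov result (Theorem \ref{thm:L} and Remark \ref{rem:rev}) as an ingredient, both in \eqref{eq:3ests} and inside Lemma \ref{lem:dif2}; your scheme would not need that, which would be a genuine simplification if it closed.

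It does not yet close, and the gap is exactly where you locate it. The statement $\|\Psi u(t_{n-1})-u(t_n)\|_{H^{s-3\mu}}\le C(\Delta t)^3$ is the entire analytic content of the theorem, and your justification of it --- ``the pure parts agree, the mixed $O((\Delta t)^2)$ parts cancel by symmetry'' --- is the formal Lie-commutator heuristic, not a proof. To make it rigorous you must (i) expand the nonlinear flow $\Phi_B$ to second order in time with a remainder controlled in $H^{s-3\mu}$, which requires bounds on $\p_t^j v$ and $\p_\ta^j v$ for $j\le 3$ with the correct derivative loss $j\max\{1,p\}$ (this is the content of Lemma \ref{lem:vt-vta}, and it is where the hypothesis $s>3/2+3\max\{1,p\}$ is consumed); and (ii) exhibit the $O((\Delta t)^2)$ terms explicitly and verify that what survives is a difference of commutators controlled via \eqref{K2} --- the analogue of Lemmas \ref{lem:Lip} and \ref{lem:FG}. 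Until step (ii) is written out, the proof is an announcement. Two smaller points: the one-step map encoded by the regions $\Om_{i,j}^{(n)}$ is $\Phi_B(\frac{\Delta t}{2})\circ\Phi_A(\Delta t)\circ\Phi_B(\frac{\Delta t}{2})$ (the two \emph{linear} half-steps fuse inside a cell and the \emph{Burgers} half-steps fuse across cells), not $\Phi_A(\frac{\Delta t}{2})\circ\Phi_B(\Delta t)\circ\Phi_A(\frac{\Delta t}{2})$ as you wrote; this is harmless for the method but your bookkeeping of which mixed terms must cancel would be wrong as stated. And your stability lemma, as formulated with both arguments bounded only in $H^{s-3\mu}$, is too weak: the $H^{\si}$ energy estimate for the difference of two Burgers solutions (Proposition \ref{prop:ibp}(A)) requires one of them in $H^{\si+1}$; this is repaired in your application because the second argument is $u(t_{n-1})\in H^{s}$, but the hypothesis should say so.
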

\begin{rem}
Theorem \ref{thm:S} is also true for the Cauchy problem of
\begin{numcases}
{}
\partial_{t} v - Kv = 0, \hs{15pt} x \in \R \, , \, (t,\ta) \in \Sigma_{S,1}^{(n)},
\nonumber \\
\partial_{\ta} v + v v_{x} = 0, \hs{15pt} x \in \R \, , \, (t,\ta) \in \Sigma_{S,2}^{(n)},
\nonumber \\
v(0,0) = u_{0} \in H^{s} (\R). \nonumber
\end{numcases}
The proof is the same as that of Theorem \ref{thm:S}.
\end{rem}
Here we mention the privious results for the splitting method. 
Holden-Karlsen-Risebro-Tao proved the first-order approximation in $\Delta t$ for the KdV equation in $H^{s-3}$ when $u_{0} \in H^{s}$ and $s \geq 5$ is an odd integer in \cite{Tao}. In the same paper, they also proved the second-order approximation for the KdV equation in $H^{s-9}$ when $u_{0} \in H^{s}$ and $s \geq 17$ is an odd integer. Our result is a generalization of these results because we can apply Theorem \ref{thm:L} for the KdV equation for $s >9/2$ and Theorem \ref{thm:S} for the KdV equation for $s > 21/2$, respectively.  
Holden-Karlsen-Risebro proved the first-order approximation in $\Delta t$ in $H^{s-p}$ and the second-order approximation in $H^{s-2p+1}$ for the case $k$ is a polynomial, where $s$ is sufficiently large, $u_{0} \in H^{s}$, and $p \geq 2$ is the degree of $k$ (see \cite{Risebro}). 
Dutta-Holden-Koley-Risebro proved the first-order approximation in $\Delta t$ in $L^{2}$ for the Benjamin-Ono equation for $u_{0} \in H^{5/2}$ and the second-order approximation in $L^{2}$ for the Benjamin-Ono equation for $u_{0} \in H^{9/2}$ in \cite{Risebro-BO}. As far as the author know, no previous results of the splitting method exist for the extended Whitham equation mentioned above. On the other hand, our results include that for the extended Whitham equation, because our theorems work for $p \geq 0$.

The rest of the paper is organized as follows:
In Section 2, we mention some preliminary lemmas for Section 3 and 4. 
In section 3 and 4, we prove the main results for the Godunov and the Strang splittings, respectively.
\end{section}

\begin{section}{preliminaries}

First, we mention the solvability of the inviscid Burgers equation
\begin{numcases}
{}
\partial_{t} v + v v_{x} = 0, \hs{15pt}  x \in \R \, , \, t \in [0,T'],
 \label{QL} \\
v(0) = v_{0} \in H^{s} (\R).  \label{IV-QL}
\end{numcases}

\begin{thm}\label{thm:Burger}
Let $s > 3/2$. Then, there exists $T' = T'(s,\|v_{0}\|_{H^{s}(\R)})>0$ such that for all $T_{0} \leq T'$ there exists a unique solution $v \in C([0,T_{0}]:H^{s}(\R)) \cap C^{1}([0,T_{0}]:H^{s-1}(\R))$ of $(\ref{QL})$ and $(\ref{IV-QL})$ defined on $t \in [0,T_{0}]$.
\end{thm}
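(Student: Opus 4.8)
The statement is the classical local well-posedness for a quasilinear hyperbolic equation and is contained in Kato's theory \cite{Kato}; here I sketch a self-contained energy-method proof. The plan has three parts: (i) an a priori estimate in $H^{s}$ that fixes the lifespan $T'$ and the bound on $[0,T_{0}]$; (ii) construction of a solution by a vanishing-viscosity (or Friedrichs-mollifier) approximation together with a compactness argument; (iii) uniqueness, the time regularity $C^{1}([0,T_{0}]:H^{s-1})$, and the upgrade from weak to strong continuity in $H^{s}$.

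For (i), write $\La = \LR{D}$ and assume $v$ is a smooth solution. Differentiating $\|v\|_{H^{s}}^{2} = \|\La^{s} v\|_{L^{2}}^{2}$ and using $\p_{t}v = -vv_{x}$,
\[
\tfrac{1}{2}\tfrac{d}{dt}\|v\|_{H^{s}}^{2} = - \langle \La^{s}(v v_{x}),\, \La^{s} v\rangle = - \langle [\La^{s},v] v_{x},\, \La^{s} v\rangle - \langle v\, \p_{x}\La^{s} v,\, \La^{s} v\rangle .
\]
The last term equals $\tfrac{1}{2}\int v_{x} (\La^{s} v)^{2}\,dx$ after integration by parts, so it is bounded by $\|v_{x}\|_{L^{\I}}\|v\|_{H^{s}}^{2}$; the commutator term is controlled by the Kato--Ponce estimate $\|[\La^{s},v]v_{x}\|_{L^{2}} \lec \|v_{x}\|_{L^{\I}}\|\La^{s-1}v_{x}\|_{L^{2}} + \|\La^{s}v\|_{L^{2}}\|v_{x}\|_{L^{\I}} \lec \|v_{x}\|_{L^{\I}}\|v\|_{H^{s}}$. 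Since $s>3/2$ gives $H^{s-1}(\R)\hookrightarrow L^{\I}(\R)$, hence $\|v_{x}\|_{L^{\I}}\lec\|v\|_{H^{s}}$, we obtain $\tfrac{d}{dt}\|v\|_{H^{s}} \lec \|v\|_{H^{s}}^{2}$. Integrating, $\|v(t)\|_{H^{s}} \leq \|v_{0}\|_{H^{s}}/(1 - C t\|v_{0}\|_{H^{s}})$, so any $T'$ with $C T'\|v_{0}\|_{H^{s}} < 1$ works and $\sup_{[0,T_{0}]}\|v\|_{H^{s}} \lec \|v_{0}\|_{H^{s}}$ whenever $T_{0}\le T'$.

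For (ii), regularize: e.g. solve the parabolic problem $\p_{t}v^{\e} + v^{\e}v^{\e}_{x} = \e\,\p_{x}^{2}v^{\e}$, $v^{\e}(0)=v_{0}$, globally in time for $H^{s}$ data, or instead the mollified ODE $\p_{t}v^{\e} + J_{\e}(J_{\e}v^{\e}\,\p_{x}J_{\e}v^{\e})=0$ in $H^{s}$ (Picard--Lindel\"of). The computation of (i) applies verbatim to $v^{\e}$: the extra term $-\e\|\p_{x}\La^{s}v^{\e}\|_{L^{2}}^{2}\le 0$ (respectively the $J_{\e}$'s, which are bounded self-adjoint operators commuting with $\La^{s}$) does not spoil the estimate, so the $v^{\e}$ are bounded in $C([0,T_{0}]:H^{s})$ uniformly in $\e$ for $T_{0}\le T'$, and from the equation $\p_{t}v^{\e}$ is bounded in $C([0,T_{0}]:H^{s-2})$. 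By Aubin--Lions (or Arzel\`a--Ascoli on compact $x$-sets, using $H^{s}\hookrightarrow\hookrightarrow H^{s'}\hookrightarrow H^{s-2}$ locally for $s-2<s'<s$) a subsequence converges in $C([0,T_{0}]:H^{s'}_{\mathrm{loc}})$, which is enough to pass to the limit in $v^{\e}v^{\e}_{x}$; the limit $v$ then solves $(\ref{QL})$--$(\ref{IV-QL})$ and lies in $L^{\I}([0,T_{0}]:H^{s})$ with $\p_{t}v=-vv_{x}\in L^{\I}([0,T_{0}]:H^{s-1})$.

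For (iii), if $v,w$ are two such solutions then $z=v-w$ solves $\p_{t}z + v z_{x} + z\,w_{x}=0$, and an $L^{2}$ energy estimate gives $\tfrac{d}{dt}\|z\|_{L^{2}}^{2}\lec(\|v_{x}\|_{L^{\I}}+\|w_{x}\|_{L^{\I}})\|z\|_{L^{2}}^{2}$, so Gronwall yields uniqueness (and, as a byproduct, $L^{2}$-Lipschitz dependence on $v_{0}$). The regularity $v\in C^{1}([0,T_{0}]:H^{s-1})$ then follows by reading off $\p_{t}v=-vv_{x}$ from the equation. The one genuinely delicate point — the main obstacle — is upgrading $v\in C_{w}([0,T_{0}]:H^{s})$ to strong continuity $v\in C([0,T_{0}]:H^{s})$: this is the standard Bona--Smith argument, i.e. one shows $t\mapsto\|v(t)\|_{H^{s}}$ is continuous (using the energy identity for mollified data, passing to the limit to get right-continuity at $t=0$, and invoking time-reversibility of the equation for the remaining times) and combines this with weak continuity and the Hilbert-space fact that weak convergence together with convergence of norms implies strong convergence. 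Every other step is routine, and the hypothesis $s>3/2$ enters exactly once, through the embedding $H^{s-1}\hookrightarrow L^{\I}$ that closes the energy estimate.
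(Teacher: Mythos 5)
Your proposal is mathematically sound, but it takes a genuinely different route from the paper for the simple reason that the paper does not prove this theorem at all: it is obtained by direct appeal to Theorem II of Kato's 1975 paper on quasi-linear symmetric hyperbolic systems, whose abstract semigroup-theoretic framework (stable families of generators, the pair of spaces $Y=H^{s}\subset X=H^{s-1}$ with the isomorphism $\LR{\p_x}$) delivers existence, uniqueness, the $C([0,T_0]:H^{s})\cap C^{1}([0,T_0]:H^{s-1})$ regularity, and continuous dependence in one stroke. You instead reconstruct the result by the concrete energy method: the Kato--Ponce commutator estimate closing the $H^{s}$ a priori bound via $H^{s-1}\hookrightarrow L^{\I}$, a mollifier or vanishing-viscosity approximation with Aubin--Lions compactness, an $L^{2}$ estimate on the difference of two solutions for uniqueness, and a Bona--Smith argument to upgrade weak to strong continuity in $H^{s}$. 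Each step of your sketch is correct (the Riccati-type bound fixing $T'$ with $CT'\|v_0\|_{H^s}<1$, the identity $vv_x-ww_x=vz_x+zw_x$, the algebra property of $H^{s-1}$ for the $C^{1}$ claim all check out), and what it buys is a self-contained, elementary proof that makes visible exactly where $s>3/2$ is used; what the paper's citation buys is brevity and the full strength of Kato's general theory without re-deriving the well-known continuity-in-$H^{s}$ subtleties. Either route is acceptable here, since the theorem is only an input to the splitting analysis.
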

We have Theorem \ref{thm:Burger} from Theorem II in \cite{Kato} by Kato.

\begin{cor}\label{cor:sol}
Let $\si > 3/2$, $\ta_{0} \geq 0$, $v(\ta_{0}, \ta_{0}) \in H^{\si}(\R)$, and $M>0$ satisfy $\| v(\ta_{0},\ta_{0}) \|_{H^{\si}} \leq M$. Then, there exists $\overline{\Delta t}_{B} = \overline{\Delta t}_{B}(\si,M) > 0$ such that for all $\Delta t \leq \overline{\Delta t}_{B}$ there exists a unique solution $v \in C([\ta_{0},\ta_{0} + \Delta t]^{2}:H^{\si}(\R)) \cap C^{1}([\ta_{0},\ta_{0} + \Delta t]^{2}:H^{\si-1}(\R))$ of
\begin{numcases}
{}
\partial_{t} v + v v_{x} = 0,  \hs{15pt} x \in \R \, , \, ( t,\ta) \in (\ta_{0}, \ta_{0} + \Delta t] \times \{ \ta_{0} \},\label{solB}\\
\partial_{\tau} v - Kv = 0, \hs{15pt} x \in \R \, , \, (t,\ta) \in [\ta_{0},\ta_{0} + \Delta t] \times (\ta_{0}, \ta_{0} + \Delta t].\label{solL}
\end{numcases}
\end{cor}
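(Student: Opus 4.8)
The plan is to build the solution by composing the two flows corresponding to the two lines of \eqref{solB}--\eqref{solL}. First, along the segment $\ta = \ta_{0}$ I would solve the inviscid Burgers equation \eqref{solB} in the variable $t$ with the prescribed corner datum $v(\ta_{0},\ta_{0})$: since $\|v(\ta_{0},\ta_{0})\|_{H^{\si}} \le M$ and $\si > 3/2$, Theorem~\ref{thm:Burger}, applied after the shift $t \mapsto t - \ta_{0}$ (which does not affect the equation), provides $T' = T'(\si,M) > 0$ and, for every $T_{0} \le T'$, a unique solution in $C([\ta_{0},\ta_{0}+T_{0}]:H^{\si}(\R)) \cap C^{1}([\ta_{0},\ta_{0}+T_{0}]:H^{\si-1}(\R))$. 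Taking $T_{0} = T'$, this solution is bounded on the compact interval $[\ta_{0},\ta_{0}+T']$, say by $M' = M'(\si,M)$ in $H^{\si}$; I would then set $\overline{\Delta t}_{B} := T'(\si,M)$, so that for every $\Delta t \le \overline{\Delta t}_{B}$ we obtain the bottom-edge trace $t \mapsto v(t,\ta_{0})$ on $[\ta_{0},\ta_{0}+\Delta t]$ with $\sup_{t}\|v(t,\ta_{0})\|_{H^{\si}} \le M'$. Next, for each fixed such $t$, solve the linear equation \eqref{solL} in $\ta$ with datum $v(t,\ta_{0})$ at $\ta = \ta_{0}$; since the linear equation is globally solvable, no further smallness of $\Delta t$ is needed, and $\overline{\Delta t}_{B}$ depends only on $\si$ and $M$.

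For the linear flow I would use the Fourier-multiplier semigroup $S(s) := \F^{-1}[e^{s k(\x)}\F\,\cdot\,]$, $s \ge 0$. By the first condition in \eqref{K1}, $|e^{s k(\x)}| = e^{s\Re k(\x)} \le 1$ when $s \ge 0$, so each $S(s)$ is a contraction on $H^{\ga}(\R)$ for every $\ga \in \R$, and $\{S(s)\}_{s\ge0}$ is a strongly continuous semigroup there (strong continuity being immediate from dominated convergence on the Fourier side); the symmetry $k(-\x) = \overline{k(\x)}$ guarantees that $S(s)$ maps $\R$-valued functions to $\R$-valued functions. The solution on the square is then $v(t,\ta) := S(\ta-\ta_{0})\,v(t,\ta_{0})$: it is $\R$-valued, solves \eqref{solL} on $[\ta_{0},\ta_{0}+\Delta t]\times(\ta_{0},\ta_{0}+\Delta t]$, matches the Burgers trace along $\ta = \ta_{0}$ because $S(0)$ is the identity operator, and obeys $\sup_{(t,\ta)}\|v(t,\ta)\|_{H^{\si}} \le M'$.

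To finish I would check the regularity and uniqueness. Joint continuity of $v$ into $H^{\si}$ on the closed square follows from continuity of $t \mapsto v(t,\ta_{0})$ (Theorem~\ref{thm:Burger}) and joint continuity of $(s,f) \mapsto S(s)f$ on $[0,\Delta t]\times H^{\si}$, the latter from the uniform bound $\|S(s)\|_{H^{\si}\to H^{\si}} \le 1$ and strong continuity. The $t$-derivative on the square exists and equals $S(\ta-\ta_{0})\big(\p_{t}v(t,\ta_{0})\big) = -S(\ta-\ta_{0})\big((v v_{x})(t,\ta_{0})\big)$, which is continuous into $H^{\si-1}$ since $H^{\si-1}(\R)$ is a Banach algebra for $\si > 3/2$ (so $\p_{t}v(\cdot,\ta_{0})$ is continuous into $H^{\si-1}$ by Theorem~\ref{thm:Burger}) and $S(s)$ is a contraction on $H^{\si-1}$; the $\ta$-derivative equals $Kv$, whose Sobolev regularity is controlled by the bound $|k(\x)| \lec \LR{\x}^{p}$ in \eqref{K1}. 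Together these yield the claimed $C^{1}$-property. For uniqueness: any two solutions in the stated class agree along $\ta = \ta_{0}$ by the uniqueness part of Theorem~\ref{thm:Burger}, and then on the whole square by uniqueness for the linear ODE \eqref{solL}.

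I expect the only real obstacle to be bookkeeping rather than anything deep: the existence of each individual flow is immediate (the first is Theorem~\ref{thm:Burger}, the second an explicit multiplier semigroup), and the two substantive points are (i) that $\overline{\Delta t}_{B}$ depends only on $(\si,M)$, which hinges on $\Re k \le 0$ so that $S(s)$ is a contraction and the $H^{\si}$-norm cannot grow along the second flow, and (ii) upgrading the two one-parameter flows to a jointly continuous, jointly $C^{1}$ map on the closed square — in particular continuity up to the (a priori open) edge $\ta = \ta_{0}$, which is what makes matching the linear flow to the Burgers trace legitimate.
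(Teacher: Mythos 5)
Your proposal is correct and takes essentially the same route as the paper: the paper's proof consists of solving \eqref{solL} explicitly as $v(t,\ta,x)=\F_{\x}^{-1}[e^{k(\x)(\ta-\ta_{0})}\F_{x}[v(t,\ta_{0},x)]]$ and invoking Theorem~\ref{thm:Burger} for the Burgers step along $\ta=\ta_{0}$, which is exactly your composition of the two flows. Your write-up merely makes explicit the details the paper leaves implicit (the contraction property from $\Re k\le 0$, joint continuity on the closed square, preservation of real-valuedness, and uniqueness).
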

We can solve (\ref{solL}) as $v(t,\ta,x) = \F_{\x}^{-1} [e^{k(\x) (\ta - \si)}\F_{x}[v(t,\si,x)]]$. Therefore we have Corollary \ref{cor:sol} from Theorem \ref{thm:Burger}.

\begin{lem}\label{lem:ibp2}
Let $s \geq \si > 3/2$. Then there exists $C = C(s)>0$ such that for all $f$, $g \in H^{s}(\R)$
\begin{align}
\| \partial_{x} \langle \partial_x \rangle^{s} (fg) - (\partial_{x} \langle \partial_x \rangle^{s} f)g - f(\partial_{x} \langle \partial_x \rangle^{s} g) \|_{L^{2}} \leq C (\| f \|_{H^{s}} \| g \|_{H^{\si}} + \| f \|_{H^{\si}} \| g \|_{H^{s}}),  \nonumber
\end{align}
where $\langle \partial_x \rangle^{s} = \F_{\x}^{-1} \langle \x \rangle^{s} \F_{x}$.
\end{lem}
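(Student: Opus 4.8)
The plan is to pass to the Fourier side and reduce the estimate to a pointwise bound on a bilinear symbol, followed by Young's convolution inequality. Write $P := \partial_x\LR{\partial_x}^s = \F^{-1}m(\xi)\F$ with $m(\xi) = i\xi\LR{\xi}^s$, and set $Q(f,g) := P(fg) - (Pf)g - f(Pg)$. Using $\widehat{fg} = c\,\widehat f * \widehat g$ and $\xi = (\xi-\eta)+\eta$, a direct computation gives
\[
\widehat{Q(f,g)}(\xi) = c\int_{\R}\mu(\xi-\eta,\eta)\,\widehat f(\xi-\eta)\,\widehat g(\eta)\,d\eta,\qquad \mu(a,b) := m(a+b)-m(a)-m(b).
\]
Since $s \geq \sigma > 3/2 > 1/2$ we have $H^s \subset H^\sigma$, the Fourier transforms lie in $L^1$, and these manipulations are legitimate, so it remains to control the contribution of $\mu$.

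Second, I would establish the symbol bound $|\mu(a,b)| \lec \LR{a}^s\LR{b} + \LR{a}\LR{b}^s$ for all $a,b\in\R$. To this end, decompose $\mu(a,b) = i\,a\big(\LR{a+b}^s - \LR{a}^s\big) + i\,b\big(\LR{a+b}^s - \LR{b}^s\big)$. Because $s \ge 1$, the fundamental theorem of calculus together with $|\nabla\LR{x}^s| \lec \LR{x}^{s-1}$ and $\LR{a+tb} \lec \LR{a}+\LR{b}$ yields $|\LR{a+b}^s - \LR{a}^s| \lec |b|\big(\LR{a}^{s-1}+\LR{b}^{s-1}\big)$, and likewise with the roles of $a$ and $b$ interchanged; multiplying by $|a|$, resp. $|b|$, and using $|a|\le\LR a$, $|b|\le\LR b$ gives the claimed bound for each of the two terms. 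The hypothesis $s\ge\sigma>3/2$ is what ensures $s\ge 1$ here.

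Third, insert this into the integral and split into two pieces. Put $F(\xi) := \LR{\xi}^s|\widehat f(\xi)|$, $G(\xi) := \LR{\xi}^s|\widehat g(\xi)|$, $\Phi(\xi) := \LR{\xi}|\widehat f(\xi)|$, $\Psi(\xi) := \LR{\xi}|\widehat g(\xi)|$, so that $\|F\|_{L^2} = \|f\|_{H^s}$, $\|G\|_{L^2} = \|g\|_{H^s}$, while by Cauchy--Schwarz $\|\Phi\|_{L^1} \le \|\LR{\,\cdot\,}^{1-\sigma}\|_{L^2}\|f\|_{H^\sigma} \lec \|f\|_{H^\sigma}$ and $\|\Psi\|_{L^1} \lec \|g\|_{H^\sigma}$ — the $L^1$ norms being finite precisely because $2(\sigma-1) > 1$, i.e. $\sigma > 3/2$. (This is exactly where the threshold $3/2$ enters: the extra factor $\xi$ in $m$ costs one derivative more than in the classical commutator with symbol $\LR{\xi}^s$, which would need only $\sigma>1/2$.) The symbol bound gives $|\widehat{Q(f,g)}(\xi)| \lec (F * \Psi)(\xi) + (\Phi * G)(\xi)$, hence by Plancherel and Young's inequality $\|h_1 * h_2\|_{L^2}\le\|h_1\|_{L^1}\|h_2\|_{L^2}$,
\[
\|Q(f,g)\|_{L^2} \lec \|F\|_{L^2}\|\Psi\|_{L^1} + \|\Phi\|_{L^1}\|G\|_{L^2} \lec \|f\|_{H^s}\|g\|_{H^\sigma} + \|f\|_{H^\sigma}\|g\|_{H^s},
\]
which is the assertion. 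The only step requiring genuine care is the symbol estimate in the second paragraph, in particular keeping track of which argument carries the top-order weight $\LR{\,\cdot\,}^s$ and which carries the lower-order weight $\LR{\,\cdot\,}$; beyond that the argument is routine and I anticipate no real difficulty.
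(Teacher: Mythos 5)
Your proposal is correct and follows essentially the same route as the paper: both pass to the Fourier side, reduce the estimate to a pointwise bound on the symbol $m(\xi+\eta)-m(\xi)-m(\eta)$ with $m(\xi)=\xi\langle\xi\rangle^{s}$, and conclude via Plancherel, Young's convolution inequality, and Cauchy--Schwarz, with $\sigma>3/2$ used exactly where you say to put the lower-order factor in $L^{1}$. The only cosmetic difference is in proving the symbol bound -- the paper symmetrizes to assume $|\xi_{1}|\le|\xi-\xi_{1}|$ and applies the mean value theorem once, while you split the symbol into two differences and apply the fundamental theorem of calculus to each -- and both yield an adequate estimate.
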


\begin{proof}
We put $h(\x) = \x \langle \x \rangle^{s}$. First, we prove
\begin{align}
|h(\x) - h(\x - \x_{1}) - h(\x_{1})| \leq C (|\x_{1}|\langle \x - \x_{1} \rangle^{s} + |\x - \x_{1}| \langle \x_{1} \rangle^{s}).\label{eq:ibp21}
\end{align}
By symmetry, we only need to prove the case $|\x_{1}| \leq |\x - \x_{1}|$. We have $|h'(\x)| = |1 + (s+1) \x^{2}|\langle \x \rangle^{s - 2} \leq C \langle \x \rangle^{s}$. By the mean-value theorem, we have 
\begin{align}
|h(\x) - h(\x - \x_{1})| &= |\xi_{1}| |h'(\x - \theta \x_{1})| \leq C |\xi_{1}| \langle \x - \theta \x_{1} \rangle^{s} \leq C |\x_{1}| \langle \x - \x_{1} \rangle^{s}, \nonumber
\end{align}
where $\theta \in (0,1)$. Since $|\x_{1}| \leq |\x - \x_{1}|$, we have $|h(\x_{1})| \leq |\x_{1}| \langle \x - \x_{1} \rangle^{s}$. Thus we have (\ref{eq:ibp21}).
Next, we prove Lemma \ref{lem:ibp2}. By the Sobolev inequality, the Plancherel equality, and (\ref{eq:ibp21}), we have
\begin{align}
L.H.S. &\leq C \Big{\|} \int_{\R} \{ h(\x) - h(\x - \x_{1}) - h(\x_{1}) \} \ha{f}(\x - \x_{1}) \ha{g} (\x_{1}) d \x_{1} \Big{\|}_{L^{2}} \nonumber \\
&\leq C \Big{\|} \int_{\R} \{ |\x_{1}| \langle \x - \x_{1} \rangle^{s} + |\x - \x_{1}| \langle \x_{1} \rangle^{s} \} |\ha{f}(\x - \x_{1})| |\ha{g} (\x_{1})|  d \x_{1} \Big{\|}_{L^{2}} \nonumber \\
&\leq C ( \| \langle \x \rangle^{s} |\ha{f}| \|_{L^{2}} \| |\x| |\ha{g}| \|_{L^{1}} + \| \langle \x \rangle^{s} |\ha{g}| \|_{L^{2}} \| |\x| |\ha{f}| \|_{L^{1}}) \nonumber \\
&\leq C ( \| f \|_{H^{s}} \| g \|_{H^{\si}} + \| g \|_{H^{s}} \| f \|_{H^{\si}} ). \nonumber
\end{align}
\end{proof}
\begin{prop}\label{prop:ibp}
Let $s \geq \si >3/2$. Assume that $f$ and $g$ are $\R$-valued functions.\\
$(A)$ Then there exists $C = C(s)>0$ such that for all $f$, $g \in H^{s}(\R)$
\begin{equation}
| \langle f,(fg)_{x} \rangle_{H^{s}} | \leq C \| f \|_{H^{s}}^{2} \| g \|_{H^{s+1}}.
\label{eq:lemA1}
\end{equation}
$(B)$ Then there exists $C = C(s,\si)>0$ such that for all $f \in H^{s}(\R)$
\begin{equation}
| \langle f,f f_{x} \rangle_{H^{s}} | \leq C \| f \|_{H^{s}}^{2} \| f \|_{H^{\si}}.
\label{eq:lemA2}
\end{equation}
\end{prop}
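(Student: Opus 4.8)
The plan is to estimate both inner products by expanding $\langle\cdot,\cdot\rangle_{H^s}$ via the Fourier side, $\langle F,G\rangle_{H^s}=\int \langle\x\rangle^{2s}\ha F(\x)\overline{\ha G(\x)}\,d\x$, and then use the commutator estimate of Lemma \ref{lem:ibp2} to replace the "bad" term $\partial_x\langle\partial_x\rangle^s(fg)$ by the two "good" terms $(\partial_x\langle\partial_x\rangle^s f)g$ and $f(\partial_x\langle\partial_x\rangle^s g)$. Concretely, for part $(A)$ I would write
\[
\langle f,(fg)_x\rangle_{H^s}=\int_\R \langle\partial_x\rangle^s f\,\cdot\,\langle\partial_x\rangle^s(fg)_x\,dx
=\int_\R \langle\partial_x\rangle^s f\,\cdot\,\partial_x\langle\partial_x\rangle^s(fg)\,dx,
\]
and split the last factor as $\partial_x\langle\partial_x\rangle^s(fg)=R+(\partial_x\langle\partial_x\rangle^s f)g+f\,\partial_x\langle\partial_x\rangle^s g$, where $R$ is the commutator controlled by Lemma \ref{lem:ibp2}.

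The contribution of $R$ is handled by Cauchy--Schwarz and Lemma \ref{lem:ibp2} with $\si$ there taken to be (say) some fixed exponent in $(3/2,s]$; since we are allowed $\|g\|_{H^{s+1}}$ on the right of \eqref{eq:lemA1}, even the crude bound $\|R\|_{L^2}\lec \|f\|_{H^s}\|g\|_{H^s}$ suffices, giving $\lec\|f\|_{H^s}^2\|g\|_{H^s}$. The term $\int (\langle\partial_x\rangle^s f)\,(\partial_x\langle\partial_x\rangle^s f)\,g\,dx$ is the one where we must \emph{not} lose a derivative on $f$: I would integrate by parts in $x$, using $2(\langle\partial_x\rangle^s f)(\partial_x\langle\partial_x\rangle^s f)=\partial_x\big((\langle\partial_x\rangle^s f)^2\big)$, to move the $\partial_x$ onto $g$, obtaining $-\tfrac12\int(\langle\partial_x\rangle^s f)^2 g_x\,dx$, which is $\lec\|f\|_{H^s}^2\|g_x\|_{L^\infty}\lec\|f\|_{H^s}^2\|g\|_{H^{s+1}}$ by Sobolev (here $s+1>3/2+1$, so $g_x\in L^\infty$; in fact $\|g\|_{H^{\si+1}}$ with $\si>1/2$ would do, but $H^{s+1}$ is what is claimed). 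The remaining term $\int(\langle\partial_x\rangle^s f)\,f\,(\partial_x\langle\partial_x\rangle^s g)\,dx$ is estimated directly by Cauchy--Schwarz, H\"older, and Sobolev: $\lec\|\langle\partial_x\rangle^s f\|_{L^2}\,\|f\|_{L^\infty}\,\|\partial_x\langle\partial_x\rangle^s g\|_{L^2}\lec\|f\|_{H^s}\|f\|_{H^\si}\|g\|_{H^{s+1}}\lec\|f\|_{H^s}^2\|g\|_{H^{s+1}}$, using $\si\le s$. Collecting the three pieces yields \eqref{eq:lemA1}.

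For part $(B)$ I would specialize the computation to $g=f$, but now being careful since only $\|f\|_{H^\si}$ (with $\si>3/2$, $\si\le s$) is allowed on the right, not $\|f\|_{H^{s+1}}$. With $g=f$ the integration-by-parts term becomes $-\tfrac12\int(\langle\partial_x\rangle^s f)^2 f_x\,dx\lec\|f\|_{H^s}^2\|f_x\|_{L^\infty}\lec\|f\|_{H^s}^2\|f\|_{H^\si}$, since $\si>3/2$ gives $f_x\in L^\infty$ with $\|f_x\|_{L^\infty}\lec\|f\|_{H^\si}$; the commutator term is $\lec\|\langle\partial_x\rangle^s f\|_{L^2}\|R\|_{L^2}\lec\|f\|_{H^s}(\|f\|_{H^s}\|f\|_{H^\si})=\|f\|_{H^s}^2\|f\|_{H^\si}$ by Lemma \ref{lem:ibp2}; and the third term $\int(\langle\partial_x\rangle^s f)\,f\,(\partial_x\langle\partial_x\rangle^s f)\,dx$ is \emph{again} of the form $\tfrac12\int f\,\partial_x\big((\langle\partial_x\rangle^s f)^2\big)\,dx=-\tfrac12\int f_x(\langle\partial_x\rangle^s f)^2\,dx$, the same expression as the first term, hence also $\lec\|f\|_{H^s}^2\|f\|_{H^\si}$. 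Summing gives \eqref{eq:lemA2}.

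The main obstacle is the derivative bookkeeping in part $(B)$: a naive Cauchy--Schwarz on the term containing $\partial_x\langle\partial_x\rangle^s f$ would cost $\|f\|_{H^{s+1}}$, which is not allowed, so the point is to recognize that \emph{both} of the non-commutator terms can be recast as $\pm\tfrac12\int f_x(\langle\partial_x\rangle^s f)^2\,dx$ after an integration by parts, trading the dangerous extra derivative on $f$ for a harmless one on the low-regularity factor. Once this structural observation is in place, everything else is Cauchy--Schwarz, H\"older, the Sobolev embedding $H^{\si}\hookrightarrow W^{1,\infty}$ for $\si>3/2$, and Lemma \ref{lem:ibp2}; no oscillatory-integral or multiplier subtleties arise because \eqref{K1}--\eqref{K2} play no role in this purely nonlinear estimate.
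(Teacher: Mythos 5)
Your proposal is correct and follows essentially the same route as the paper: the same three-term decomposition via the commutator Lemma \ref{lem:ibp2}, the same integration by parts turning $2(\langle\partial_x\rangle^s f)(\partial_x\langle\partial_x\rangle^s f)$ into $\partial_x\big((\langle\partial_x\rangle^s f)^2\big)$, and for part (B) the same observation that with $g=f$ both non-commutator terms reduce to $-\tfrac12\int f_x(\langle\partial_x\rangle^s f)^2\,dx$. No gaps.
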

\begin{proof}
First, we show (A). By the definition of inner product of $H^{s}(\R)$, we have
\begin{align}
\langle f, (fg)_{x} \rangle_{H^{s}} &= \langle \langle \partial_{x} \rangle^{s} f, (\partial_{x} \langle \partial_{x} \rangle^{s} f) g \rangle_{L^{2}} + \langle \langle \partial_{x} \rangle^{s} f, f (\partial_{x} \langle \partial_{x} \rangle^{s} g) \rangle_{L^{2}} \nonumber \\
&\quad + \langle \langle \partial_x \rangle^{s} f , \partial_{x} \langle \partial_x \rangle^{s} (fg) - (\partial_{x} \langle \partial_x \rangle^{s} f)g - f(\partial_{x} \langle \partial_x \rangle^{s} g) \rangle_{L^{2}}.\label{eq:ibp01}
\end{align}
For the first term, by integration by parts and the Sobolev inequality, we have
\begin{align}
| \langle \langle \partial_{x} \rangle^{s} f, (\partial_{x} \langle \partial_{x} \rangle^{s} f) g \rangle_{L^{2}} | &= | - \frac{1}{2} \langle \langle \partial_{x} \rangle^{s} f, (\langle \partial_{x} \rangle^{s} f) (\partial_{x} g) \rangle_{L^{2}} | \nonumber \\
&\leq C \| f \|_{H^{s}}^{2} \| g \|_{H^{\si}}.\label{eq:ibp02}
\end{align}
For the second term, by the Sobolev inequality, we have $| \langle \langle \partial_{x} \rangle^{s} f, f (\partial_{x} \langle \partial_{x} \rangle^{s} g) \rangle_{L^{2}} | \leq C \| f \|_{H^{s}} \| f \|_{H^{\si - 1}} \| g \|_{H^{s+1}}$. By Lemma \ref{lem:ibp2}, the third term in (\ref{eq:ibp01}) is bounded by
\begin{align}
&\| f \|_{H^{s}} \| \partial_{x} \langle \partial_x \rangle^{s} (fg) - (\partial_{x} \langle \partial_x \rangle^{s} f)g - f(\partial_{x} \langle \partial_x \rangle^{s} g) \|_{L^{2}} \nonumber \\ 
&\quad \leq C \| f \|_{H^{s}} (\| f \|_{H^{s}} \| g \|_{H^{\si}} + \| f \|_{H^{\si}} \| g \|_{H^{s}}).\label{eq:ibp03}
\end{align}
Since $s \geq \si$, we have the desired result. Next, we prove (B). We put $g = f$ in  (\ref{eq:ibp01}). Then the second term in (\ref{eq:ibp01}) is equal to the first term. Therefore, we have the desired result by (\ref{eq:ibp02}) and (\ref{eq:ibp03}).
\end{proof}


The following lemma is so called bootstrap lemma, which follows from the continuity of $v$ and the connectivity of $\Pi^{(n)}$.
\begin{lem}{(bootstrap lemma)}\label{lem:bootstrap}
Let $\si > 3/2$, $\Delta t > 0$, $n \in \N$, and $\Pi^{(n)} = \Pi_{G}^{(n)}$ or $\Pi_{S}^{(n)}$. Assume that $v \in C(\Pi^{(n)} : H^{\si})$ and $C_{1} > 0$ satisfy the following two conditions.\\
$(A)$ $\| v(0,0) \|_{H^{\si}} \leq C_{1}$. \\
$(B)$ $\sup_{(t,\ta) \in \Pi^{(n)}} \| v(t,\ta) \|_{H^{\si}} \leq C_{1} / 2$ holds if $\sup_{(t,\ta) \in \Pi^{(n)}} \| v(t,\ta) \|_{H^{\si}} \leq C_{1}$. \\
Then we have
\begin{equation}
\sup_{(t,\ta) \in \Pi^{(n)}} \| v(t,\ta) \|_{H^{\si}} \leq C_{1} / 2. \nonumber
\end{equation}
\end{lem}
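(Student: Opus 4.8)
The plan is the standard continuity (bootstrap) argument. Since $v\in C(\Pi^{(n)}:H^{\si})$, the map $g:\Pi^{(n)}\to[0,\infty)$ given by $g(t,\ta)=\|v(t,\ta)\|_{H^{\si}}$ is continuous, and by $(A)$ it satisfies $g(0,0)\le C_{1}$ (we regard $(0,0)$ as the initial point of $\Pi^{(n)}$, to which $v$ extends continuously with value $u_{0}$). Two purely topological facts about the domain must be isolated first. First, $\Pi^{(n)}$ is path-connected: from the explicit formulas for the $\Om$'s one checks that, although the pieces are half-open, their closures abut along the lines $\ta=t_{l-1}$ and $t=t_{l}$ (for the Godunov case, and along the corresponding lines $\ta=t_{l-1},\ t=t_{l-1/2},\ \ta=t_{l-1/2}$ for the Strang case), so any two points can be joined by a polygonal path lying in $\Pi^{(n)}$. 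Second, $t+\ta$ is non-decreasing along the evolution, so the sub-regions $R_{c}:=\Pi^{(n)}\cap\{(t,\ta):t+\ta\le c\}$, $c\in[0,2n\Delta t]$, are again path-connected, contain $(0,0)$, exhaust $\Pi^{(n)}$ with $R_{2n\Delta t}=\Pi^{(n)}$, and have compact closure $\overline{R_{c}}\subset\Pi^{(n)}\cup\{(0,0)\}$, to which $g$ extends continuously.

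With this in hand, set $h(c):=\sup_{R_{c}}g$. Since $\overline{R_{c}}$ is compact and $g$ is continuous on it, $h$ is finite, non-decreasing, and continuous in $c$, with $h(0)=g(0,0)\le C_{1}$. The content of $(B)$, applied to $R_{c}$ in place of $\Pi^{(n)}$ (the argument that establishes $(B)$ — a Gronwall-type estimate on each step, using Proposition \ref{prop:ibp} on the Burgers steps together with the dissipativity \eqref{K1} on the linear steps — goes through verbatim on any such initial sub-region), is precisely the implication $h(c)\le C_{1}\ \Rightarrow\ h(c)\le C_{1}/2$. Now let $c^{\ast}:=\sup\{c\in[0,2n\Delta t]:h(c)\le C_{1}/2\}$; this set is nonempty, since $h(0)\le C_{1}$ forces $h(0)\le C_{1}/2$ by $(B)$, and by continuity of $h$ it is closed, so $h(c^{\ast})\le C_{1}/2$. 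If $c^{\ast}<2n\Delta t$, then by continuity $h(c)<C_{1}$, hence $h(c)\le C_{1}$, for all $c$ in a right-neighbourhood of $c^{\ast}$, whence $(B)$ gives $h(c)\le C_{1}/2$ there, contradicting the definition of $c^{\ast}$. Therefore $c^{\ast}=2n\Delta t$ and $\sup_{(t,\ta)\in\Pi^{(n)}}\|v(t,\ta)\|_{H^{\si}}=h(2n\Delta t)\le C_{1}/2$, as claimed.

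The only genuinely non-routine points are the two topological claims: the path-connectedness of $\Pi^{(n)}$ (and of each $R_{c}$) despite the half-open/closed structure of the $\Om$'s — the pieces do not literally overlap, but their closures meet, which is enough to thread a path — and the continuity of $h$, which relies on $g$ extending continuously to the compact set $\overline{R_c}$. I also expect one minor cosmetic point, namely observing that hypothesis $(B)$ may legitimately be invoked on the sub-regions $R_{c}$ and not merely on $\Pi^{(n)}$ itself; this is automatic from the way $(B)$ is proved. Everything else is the textbook connectedness argument, so beyond careful bookkeeping of the geometry of $\Pi^{(n)}$ I anticipate no serious obstacle.
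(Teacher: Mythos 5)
The paper offers no proof of this lemma beyond the remark that it ``follows from the continuity of $v$ and the connectivity of $\Pi^{(n)}$,'' and your argument is a correct, careful implementation of exactly that idea, so in substance it matches the paper. The one genuinely important point --- which you correctly identify rather than gloss over --- is that hypothesis $(B)$ as literally written (a single implication about the full region $\Pi^{(n)}$) is vacuously satisfied whenever $\sup_{\Pi^{(n)}}\|v\|_{H^{\si}}>C_{1}$, so the lemma is false as stated; any correct proof must invoke $(B)$ on an exhausting connected family of sub-regions such as your $R_{c}$, and this strengthened form is indeed what the paper's applications supply (in Corollary \ref{cor:rmL} the implication is verified for every $n$ with $n\Delta t\le T$, hence on every initial sub-region). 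The only step I would ask you to write out is the continuity of $h(c)=\sup_{R_{c}}g$: it does not follow from compactness of $\overline{R_{c}}$ alone but also needs that every point of $\Pi^{(n)}$ with $t+\ta=c$ is a limit of points of $\Pi^{(n)}$ with $t+\ta<c$ (true here because each half-open piece $\Om$ can be traversed toward its corner of minimal $t+\ta$ without leaving it), and that works out.
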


\begin{lem}\label{lem:LHs}
Let $\Delta t>0$, $T>0$, $n \in \N$ such that $n \Delta t \leq T$, $\Pi^{(n)} = \Pi_{G}^{(n)}$ (resp. $\Pi_{S}^{(n)}$) and $s \geq \si > 3/2$. Assume that $v \in C(\Pi^{(n)}: H^{\si})$ satisfy $(\ref{eq:B})$--$(\ref{IVL})$ (resp. $(\ref{eq:SB})$--$(\ref{IVS})$) on $\Pi^{(n)}$. Assume that $C_{1} > 0$ and $v$ satisfy $\sup_{(t,\ta) \in \Pi^{(n)}} \| v(t,\ta) \|_{H^{\si}} \leq C_{1}$. Then there exists $C_{1}' = C_{1}' (\| u_{0} \|_{H^{s}}, C_{1}, s, \si, T) > 0$ such that
\begin{equation}
\sup_{(t,\tau) \in \Pi^{(n)} } \| v(t,\tau) \|_{H^{s}} \leq C_{1}'. \label{eq:LHs}
\end{equation}
\end{lem}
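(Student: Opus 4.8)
The plan is to propagate the $H^{s}$-bound cell by cell along the alternating Burgers and linear sub-flows that compose $\Pi^{(n)}$, relying on two elementary facts: the inviscid Burgers flow inflates the $H^{s}$-norm at most exponentially, with a rate governed by the already-controlled $H^{\si}$-norm, whereas the linear flow $\p_{\ta}v=Kv$ is non-expansive on every $H^{s}$.

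First I would handle a single Burgers sub-interval: if $v$ solves $\p_{t}v+vv_{x}=0$ on $t\in[\ta_{0},\ta_{0}+\de]$ with $\de\le\Delta t$ and $v(\ta_{0})\in H^{s}$, then, as long as $v(t)\in H^{s}$, the energy identity $\tfrac12\tfrac{d}{dt}\|v(t)\|_{H^{s}}^{2}=-\LR{v,vv_{x}}_{H^{s}}$ together with Proposition \ref{prop:ibp}(B) and the hypothesis $\|v(t)\|_{H^{\si}}\le C_{1}$ gives $\tfrac{d}{dt}\|v(t)\|_{H^{s}}^{2}\le 2C(s,\si)C_{1}\|v(t)\|_{H^{s}}^{2}$, hence by Gronwall $\|v(t)\|_{H^{s}}\le\|v(\ta_{0})\|_{H^{s}}e^{C(s,\si)C_{1}\de}$. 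To turn this a priori bound into a genuine one I would argue by continuation on the maximal subinterval of $H^{s}$-existence: the Gronwall bound keeps $\|v\|_{H^{s}}$ finite up to that maximal time, so Theorem \ref{thm:Burger} applied there, together with uniqueness in $H^{\si}$ (a solution in $H^{\si}$ exists on all of $[\ta_{0},\ta_{0}+\de]$ by hypothesis), extends the $H^{s}$-solution, forcing the maximal subinterval to be all of $[\ta_{0},\ta_{0}+\de]$. For a linear sub-interval I would use the explicit representation $\ha v(\ta,\x)=e^{k(\x)(\ta-\ta_{0})}\ha v(\ta_{0},\x)$ from Corollary \ref{cor:sol}: since $\Re k(\x)\le0$ by \eqref{K1} and $\ta\ge\ta_{0}$, one has $|e^{k(\x)(\ta-\ta_{0})}|\le1$, so $\|v(\ta)\|_{H^{s}}\le\|v(\ta_{0})\|_{H^{s}}$.

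Then I would iterate over the steps $l=1,\dots,n$ by induction. Put $M_{l}:=\sup\{\|v(t,\ta)\|_{H^{s}}:(t,\ta)$ lies in the $l$-th cell of $\Pi^{(n)}\}$ and $M_{0}:=\|v(0,0)\|_{H^{s}}=\|u_{0}\|_{H^{s}}$. In the Godunov case the $l$-th cell $\Om_{1}^{(l)}\cup\Om_{2}^{(l)}$ is traversed, starting from $v(t_{l-1},t_{l-1})$ (which lies in the $(l-1)$-th cell, hence has $H^{s}$-norm $\le M_{l-1}$), by one Burgers flow of duration $\Delta t$ followed by linear flow; in the Strang case the $l$-th cell is traversed by Burgers/linear pieces whose Burgers durations sum to $\Delta t$. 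Either way the two facts above give $M_{l}\le M_{l-1}e^{C(s,\si)C_{1}\Delta t}$, hence $M_{l}\le\|u_{0}\|_{H^{s}}e^{C(s,\si)C_{1}l\Delta t}\le\|u_{0}\|_{H^{s}}e^{C(s,\si)C_{1}T}$ since $l\Delta t\le n\Delta t\le T$; taking $C_{1}':=\|u_{0}\|_{H^{s}}e^{C(s,\si)C_{1}T}$ and the maximum over $l\le n$ yields \eqref{eq:LHs}.

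The delicate point is the persistence-of-regularity step: Theorem \ref{thm:Burger} only furnishes an $H^{s}$-existence time that a priori shrinks as $\|v\|_{H^{s}}$ grows, so one cannot simply iterate it; the continuation argument is precisely what makes the bound uniform over the $O(T/\Delta t)$ steps, and it is essential there that the hypothesis controls the \emph{low}-regularity norm $\|v\|_{H^{\si}}$, which simultaneously drives the Gronwall inequality and (via Kato's theorem and uniqueness) supplies the solution on the entire cell. The remaining ingredients — the energy identity, the Fourier-multiplier bound, and the telescoping product — are routine.
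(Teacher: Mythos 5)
Your proposal is correct and follows essentially the same route as the paper: the linear flow is non-expansive on $H^{s}$ by \eqref{K1}, the Burgers flow grows at most like $e^{CC_{1}\Delta t}$ via Proposition \ref{prop:ibp}(B) and Gronwall with the rate controlled by the assumed $H^{\si}$-bound, and the telescoping product over the $n$ cells gives $\|u_{0}\|_{H^{s}}e^{CC_{1}T}$. Your additional continuation argument for persistence of $H^{s}$-regularity addresses a point the paper leaves implicit, but it does not change the structure of the proof.
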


\begin{proof}
We prove the case $\Pi^{(n)} = \Pi_{G}^{(n)}$. From (\ref{K1}) and (\ref{eq:L}), $\| v(t,\ta) \|_{H^{s}}$ is monotonically decresing with respect to $\ta$ in $\Om_{2}^{(n)}$. So we only need to prove the boundness of $\| v(t,t_{n-1}) \|_{H^{s}}$ in $\Om_{1}^{(n)}$. By (\ref{eq:B}), (\ref{eq:lemA2}), and $\sup_{(t,\ta) \in \Pi_{G}^{(n)}} \| v(t,\ta) \|_{H^{\si}} \leq C_{1}$, we have
\begin{align}
\frac{d}{dt} \| v(t, t_{n-1}) \|_{H^{s}}^{2} &= - 2 \langle v(t,t_{n-1}) , v(t,t_{n-1}) v_{x}(t,t_{n-1}) \rangle_{H^{s}} \nonumber \\
&\leq C \| v(t,t_{n-1}) \|_{H^{s}}^{2} \| v(t,t_{n-1}) \|_{H^{\si}} \nonumber \\
&\leq C C_{1} \| v(t,t_{n-1}) \|_{H^{s}}^{2}. \label{eq:LHs2}
\end{align}
We have $\| v(t,t_{n-1}) \|_{H^{s}} \leq \| v(t_{n-1},t_{n-1}) \|_{H^{s}}e^{C C_{1} (t-t_{n-1})}$ by applying the Gronwall inequality to (\ref{eq:LHs2}). Therefore it follows that
\begin{align}
\| v(t,\ta) \|_{H^{s}} &\leq \| v(t_{n-1},t_{n-1}) \|_{H^{s}} e^{C C_{1} (t-t_{n-1})} \nonumber \\
&\leq \| v(t_{n-2},t_{n-2}) \|_{H^{s}} e^{C C_{1} (t_{n-1}-t_{n-2})} e^{C C_{1} (t-t_{n-1})} \nonumber \\
&\leq \cdots \leq \| v(0,0) \|_{H^{s}} e^{C C_{1} t} \leq \| u_{0} \|_{H^{s}} e^{C C_{1} T}. 
\end{align}
Similar arguments apply to the case $\Pi^{(n)} = \Pi_{S}^{(n)}$.
\end{proof}
\end{section}

\begin{section}{Estimate for the Godunov splitting}

The main estimate in this section is Proposition \ref{prop:bootL1} below.
\begin{prop}\label{prop:bootL1}
Let $\Delta t>0$, $T>0$, $n \in \N$ such that $n \Delta t \leq T$, and $s_{1} = s - \max \{1,p \} > 3/2$. Assume that $v \in C(\Pi_{G}^{(n)} : H^{s})$  and $C_{1}'>0$ satisfy $(\ref{eq:B})$--$(\ref{IVL})$ and $(\ref{eq:LHs})$. Then there exists $C_{2} = C_{2}(C_{0}, C_{1}', s, s_{1}, T)>0$ such that
\begin{equation}
\sup_{t \in [0, n \Delta t]} \| v(t,t) - u(t) \|_{H^{s_{1}}} \leq C_{2} \Delta t. \nonumber
\end{equation}
\end{prop}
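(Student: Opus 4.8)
The plan is to study the error $e(t):=v(t,t)-u(t)$ on $[0,n\Delta t]$, show it solves a forced version of $(\ref{eq:DBO})$ whose forcing is $O(\Delta t)$ in $H^{s_1}$, and close a Gronwall estimate in $H^{s_1}$. Fix $l\in\{1,\dots,n\}$. On $\Om_1^{(l)}$ the function $w_l(t):=v(t,t_{l-1})$ solves the inviscid Burgers equation, so $\p_t w_l=-w_l\p_x w_l$, and on $\Om_2^{(l)}$ one has $v(t,\ta)=e^{(\ta-t_{l-1})K}v(t,t_{l-1})$; hence, writing $\mu:=t-t_{l-1}\in[0,\Delta t]$, the diagonal value is $V:=v(t,t)=e^{\mu K}w_l(t)$ for $t\in[t_{l-1},t_l]$. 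Differentiating in $t$ (legitimate since, by Corollary \ref{cor:sol}, $w_l$ is $C^1$ into $H^{s-1}$, and $e^{\mu K}$ commutes with $\p_x$ and with $\p_\mu$) gives $\p_t V=KV-e^{\mu K}(w_l\p_x w_l)$, i.e. $V$ solves $\p_t V+V\p_x V-KV=D(t)$ with defect
\[
D(t):=V\p_x V-e^{\mu K}(w_l\p_x w_l)=\tfrac12\p_x\!\big[(e^{\mu K}w_l)^2-e^{\mu K}(w_l^2)\big].
\]
Using the elementary identity $f\p_x f-g\p_x g=\p_x\!\big(g(f-g)\big)+(f-g)\p_x(f-g)$ with $f=V,\ g=u$ and subtracting the equation for $u$, we obtain, on each $(t_{l-1},t_l]$,
\[
\p_t e+\p_x(ue)+e\,\p_x e-Ke=D(t),\qquad e(0)=0 .
\]

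The step I expect to be the main obstacle is the uniform bound $\|D(t)\|_{H^{s_1}}\lec\Delta t$. The naive route---estimating $(I-e^{\mu K})(w_l\p_x w_l)$ by itself---loses $1+p$ derivatives and does not close; the gain comes from the cancellation between $(e^{\mu K}w_l)^2$ and $e^{\mu K}(w_l^2)$, which is precisely what $(\ref{K2})$ quantifies. I would pass to Fourier variables: the bilinear symbol of $D(t)$ is $\tfrac{i\x}{2}\,m_\mu(\x,\y)$ with $m_\mu(\x,\y)=e^{\mu(k(\x-\y)+k(\y))}-e^{\mu k(\x)}$. Since $\Re k\le0$, both exponents have nonpositive real part, so integrating $e^z$ along the segment joining them yields $|m_\mu(\x,\y)|\le\mu\,|k(\x-\y)+k(\y)-k(\x)|$. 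Next, from the rearrangement
\[
\x\big(k(\x)-k(\y)-k(\x-\y)\big)=\big(\x k(\x)-\y k(\y)-(\x-\y)k(\x-\y)\big)-(\x-\y)k(\y)-\y k(\x-\y),
\]
applying $(\ref{K2})$ with frequencies $\x-\y$ and $\y$ to the first bracket and $(\ref{K1})$ to the last two terms gives $|\x|\,|m_\mu(\x,\y)|\lec\Delta t\big(|\x-\y|\LR{\y}^p+|\y|\LR{\x-\y}^p\big)$. Inserting this into $\widehat{D(t)}$ and running a standard bilinear (Cauchy--Schwarz/Young) product estimate in Sobolev spaces, distributing the weight $\LR{\x}^{s_1}$ between the two copies of $w_l$, yields $\|D(t)\|_{H^{s_1}}\lec\Delta t\,\|w_l(t)\|_{H^s}^2\le\Delta t\,(C_1')^2$; here we used $\|w_l(t)\|_{H^s}=\|v(t,t_{l-1})\|_{H^s}\le C_1'$ from $(\ref{eq:LHs})$ and the numerology $s_1+1\le s$, $s_1+p\le s$, $s>\tfrac32+\max\{1,p\}$. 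This is exactly where the loss of $\max\{1,p\}$ derivatives is incurred.

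It then remains to close the energy estimate. After noting that $e\in C([0,n\Delta t];H^s)$ is piecewise $C^1$ into $H^{s_1}$ (so that $t\mapsto\|e(t)\|_{H^{s_1}}^2$ is absolutely continuous), I pair the equation for $e$ with $e$ in $H^{s_1}$. The dissipative term is harmless: $\langle e,Ke\rangle_{H^{s_1}}=\int\LR{\x}^{2s_1}\Re k(\x)\,|\widehat e(\x)|^2\,d\x\le0$ by $(\ref{K1})$. For the transport term, $|\langle e,\p_x(ue)\rangle_{H^{s_1}}|\lec\|e\|_{H^{s_1}}^2\|u\|_{H^{s_1+1}}\le C_0\|e\|_{H^{s_1}}^2$ by Proposition \ref{prop:ibp}(A) and $s_1+1\le s$; for the quadratic term, $|\langle e,e\p_x e\rangle_{H^{s_1}}|\lec\|e\|_{H^{s_1}}^3\le(C_0+C_1')\|e\|_{H^{s_1}}^2$ by Proposition \ref{prop:ibp}(B) together with the a priori bound $\|e(t)\|_{H^{s_1}}\le\|v(t,t)\|_{H^s}+\|u(t)\|_{H^s}\le C_1'+C_0$ coming from $(\ref{eq:LHs})$ and $(\ref{bdd-u})$; and the forcing contributes $|\langle e,D\rangle_{H^{s_1}}|\le\|e\|_{H^{s_1}}\|D\|_{H^{s_1}}$. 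Hence
\[
\frac{d}{dt}\|e(t)\|_{H^{s_1}}^2\le C(C_0,C_1')\,\|e(t)\|_{H^{s_1}}^2+\|D(t)\|_{H^{s_1}}^2
\]
on each $(t_{l-1},t_l)$. Integrating over $[0,n\Delta t]\subseteq[0,T]$, using $e(0)=0$ and $\int_0^{n\Delta t}\|D(t')\|_{H^{s_1}}^2\,dt'\le T\,(C\Delta t(C_1')^2)^2\lec\Delta t^2$, and applying Gronwall's inequality gives $\sup_{t\in[0,n\Delta t]}\|e(t)\|_{H^{s_1}}\le C_2\Delta t$ with $C_2=C_2(C_0,C_1',s,s_1,T)$, which is the assertion.
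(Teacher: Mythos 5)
Your proof is correct, and its overall architecture coincides with the paper's: you derive the same perturbed equation for the error $e=v(t,t)-u(t)$ (the paper's \eqref{eq:w'L}), close the same $H^{s_1}$ energy/Gronwall estimate using \eqref{K1} and Proposition \ref{prop:ibp} (the paper's Lemma \ref{lem:bGWL1}), and your defect $D(t)=\tfrac12\p_x\big[(e^{\mu K}w_l)^2-e^{\mu K}(w_l^2)\big]$ is literally the paper's forcing term $F(t,\ta)=v_t+vv_x$ evaluated on the diagonal, once one inserts $v(t,\ta)=e^{(\ta-t_{l-1})K}v(t,t_{l-1})$. Where you genuinely diverge is in how the $O(\Delta t)$ bound on this forcing is obtained (the paper's Lemma \ref{lem:bGWL2}). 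The paper differentiates $F$ in $\ta$, observes $\p_\ta F-KF=X$ with $X=-\{K(v^2)_x/2-vKv_x\}-v_xKv$ bounded in $H^{s_1}$ via \eqref{K2}, and integrates the resulting inequality $\p_\ta\|F\|_{H^{s}}\le C$ from $\ta=t_{l-1}$, where $F$ vanishes. You instead estimate the semigroup commutator directly at the symbol level, using $|e^{z_1}-e^{z_2}|\le|z_1-z_2|$ for $\Re z_i\le 0$ to reduce to $\mu\,|\x|\,|k(\x-\y)+k(\y)-k(\x)|$, and then your rearrangement identity feeds this into \eqref{K2} together with \eqref{K1}; the two extra terms $(\x-\y)k(\y)$ and $\y k(\x-\y)$ you pick up are exactly the symbol of the paper's $X_2$-type contribution, so the two computations are estimating the same cancellation. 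Your route is slightly more self-contained (no need to justify differentiating $F$ in $\ta$ or to invoke the vanishing of $F$ on $\Om_1^{(l)}$), at the cost of relying on the explicit exponential formula for the linear flow; the paper's route generalizes more readily to situations where the linear propagator is not written out explicitly. Both incur the same loss of $\max\{1,p\}$ derivatives at the same point, and the regularity bookkeeping in your bilinear estimate ($s_1+1\le s$, $s_1+p\le s$, $s_1>3/2$) matches the paper's \eqref{eq:L4+}.
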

Proposition \ref{prop:bootL1} follows from Lemmas \ref{lem:bGWL1} and \ref{lem:bGWL2} below.
\begin{lem}\label{lem:bGWL1}
Let $F = v_{t} + v v_{x}$ and $F(t) = F(t,t)$. Under the same assumptions of Proposition \ref{prop:bootL1}, there exists $C = C(C_{0}, C_{1}' ,s , s_{1}, T)>0$ such that
\begin{equation}
\| v(t,t) - u(t) \|_{H^{s_{1}}} \leq C \int_{0}^{t} \| F(t') \|_{H^{s_{1}}}  d t'
\label{eq:GronL}
\end{equation}
for all $t \in [0,n \Delta t]$.
\end{lem}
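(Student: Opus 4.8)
The plan is to recognise that the diagonal trace $w(t):=v(t,t)$ solves the full equation (\ref{eq:DBO}) up to the explicit defect $F$, and then to close a Gronwall estimate in $H^{s_1}$ for the error $r:=w-u$; the key structural point is that $K$ is dissipative on $H^{s_1}$ because $\Re k\le 0$, so no derivatives are lost from the $Ku$-term and the argument closes at regularity $H^{s_1}$.

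First I would write down the equation satisfied by $w$. Fix $l$ with $1\le l\le n$. By (\ref{eq:L}) and Corollary \ref{cor:sol}, on the square $[t_{l-1},t_l]^2$ the function $v$ is the inviscid Burgers trace $t\mapsto v(t,t_{l-1})$ propagated in $\tau$ by the linear semigroup $e^{(\tau-t_{l-1})K}$ (contractive on every $H^{\sigma}$ by (\ref{K1})), hence $v$ is $C^1$ there with values in $H^{s-1}\subset H^{s_1}$, using $s_1=s-\max\{1,p\}\le\min\{s-1,s-p\}$. Differentiating along the diagonal, using $\partial_\tau v=Kv$ on $\Sigma_{G,2}^{(n)}$ and the definition $F=v_t+vv_x$, gives for $t\in(t_{l-1},t_l)$
\[
\frac{d}{dt}w(t)=(\partial_t v)(t,t)+(\partial_\tau v)(t,t)=F(t)-w(t)w_x(t)+Kw(t),
\]
that is, $\partial_t w+ww_x-Kw=F$ on each such interval, while $w$ is continuous on $[0,n\Delta t]$ (since $v\in C(\Pi_G^{(n)}:H^s)$) and $w(0)=u_0$ by (\ref{IVL}). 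Subtracting (\ref{eq:DBO}) and writing $ww_x-uu_x=\tfrac12\partial_x\big(r(w+u)\big)$, the error $r=w-u$ satisfies $r(0)=0$ and $\partial_t r-Kr+\tfrac12\partial_x\big(r(w+u)\big)=F$.

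Next I would differentiate $\|r(t)\|_{H^{s_1}}^2$, pairing the equation for $r$ with $r$ in $H^{s_1}$. This yields three terms: the term $\langle Kr,r\rangle_{H^{s_1}}$, which equals $\int_\R \Re k(\xi)\,\langle\xi\rangle^{2s_1}|\widehat r(\xi)|^2\,d\xi\le 0$ by (\ref{K1}) (here $r$ is $\R$-valued and $K$ commutes with $\langle\partial_x\rangle^{s_1}$); the nonlinear term, which by Proposition \ref{prop:ibp}(A) applied with index $s_1$ to $f=r$, $g=w+u$ is bounded by $C\|r\|_{H^{s_1}}^2\|w+u\|_{H^{s_1+1}}$, where $s_1+1\le s$ lets me use $\|w+u\|_{H^{s_1+1}}\le\|w\|_{H^s}+\|u\|_{H^s}\le C_1'+C_0$ by Lemma \ref{lem:LHs} (i.e. (\ref{eq:LHs})) and (\ref{bdd-u}); and the forcing term, bounded by $\|F\|_{H^{s_1}}\|r\|_{H^{s_1}}$. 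With $C_3:=C_0+C_1'$ this gives, a.e.\ on $[0,n\Delta t]$,
\[
\frac{d}{dt}\|r(t)\|_{H^{s_1}}^2\le C C_3\|r(t)\|_{H^{s_1}}^2+C\|r(t)\|_{H^{s_1}}\|F(t)\|_{H^{s_1}}.
\]
Since $r$ is continuous on $[0,n\Delta t]$ and piecewise $C^1$ into $H^{s_1}$ (corners only at the finitely many grid times $t_l$), $t\mapsto\|r(t)\|_{H^{s_1}}^2$ is absolutely continuous; estimating $\sqrt{\|r(t)\|_{H^{s_1}}^2+\varepsilon}$ to avoid division by zero, integrating, and letting $\varepsilon\downarrow 0$, Gronwall's inequality together with $r(0)=0$ gives
\[
\|r(t)\|_{H^{s_1}}\le e^{C C_3 t}\int_0^t\|F(t')\|_{H^{s_1}}\,dt'\le e^{C(C_0+C_1')T}\int_0^t\|F(t')\|_{H^{s_1}}\,dt',
\]
which is (\ref{eq:GronL}).

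I expect the main difficulty to be bookkeeping rather than hard analysis: carefully justifying the equation for the diagonal trace $w$ away from the grid times (where both $F$ and $w'$ jump), checking that $t\mapsto F(t,t)$ is genuinely $H^{s_1}$-integrable — on each $\Omega_2^{(l)}$ one has $F=v_t+vv_x\in H^{s-1}\subset H^{s_1}$ — and, most importantly, isolating the place where the sign condition $\Re k\le 0$ from (\ref{K1}) enters, since it is precisely this that lets the energy estimate close at the level $H^{s_1}$ rather than losing the $p$ derivatives carried by $K$. No commutator estimate beyond Proposition \ref{prop:ibp} should be needed.
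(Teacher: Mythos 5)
Your proposal is correct and follows essentially the same route as the paper: differentiate the diagonal trace, obtain the perturbed Burgers-type equation for the error with forcing $F$, use $\Re k\le 0$ to discard the $K$-term, control the quadratic terms via Proposition \ref{prop:ibp} together with Lemma \ref{lem:LHs} and \eqref{bdd-u}, and close with Gronwall. The only differences are cosmetic (you fold $-ww_x-(uw)_x$ into the single term $-\tfrac12\partial_x(r(w+u))$ and use only part (A) of Proposition \ref{prop:ibp}, and you spell out the absolute-continuity and regularization details that the paper leaves implicit).
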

\begin{proof}
Let $w(t) = v(t,t) - u(t)$. By (\ref{eq:DBO}), (\ref{eq:B}), and the definition of $F$, we have
\begin{align}
\frac{\partial}{\partial t}w(t,x) &= v_{t} (t,\ta,x) |_{\ta = t} + v_{\ta} (t,\ta,x) |_{\ta = t} - u_{t} (t,x) \nonumber \\
&= - ww_{x} -(uw)_{x} + Kw + v_{t} + v v_{x} \nonumber \\
&= - ww_{x} -(uw)_{x} + Kw + F \label{eq:w'L}
\end{align}
In view of \eqref{eq:w'L}, we call $F$ the forcing term. Then we have
\begin{equation}
\frac{d}{dt} \| w(t) \|_{H^{s_{1}}}^{2} = 2 \langle w, - ww_{x} -(uw)_{x} + Kw + F \rangle_{H^{s_{1}}}.
\label{eq:L1}
\end{equation}
Note that $\langle w,Kw \rangle_{H^{s}} \leq 0$ from (\ref{K1}). By the Schwarz inequality, Proposition \ref{prop:ibp}, Lemma \ref{lem:LHs}, and (\ref{bdd-u}), we get
\begin{align}
\frac{d}{dt} \| w(t) \|_{H^{s_{1}}} &\leq C \{ \| w \|_{H^{s_{1}}} ( \| w \|_{H^{s_{1}}} + \| u \|_{H^{s_{1} + 1}} ) + \| F \|_{H^{s_{1}}} \} \nonumber \\
&\leq C (\| w \|_{H^{s_{1}}} + \| F \|_{H^{s_{1}}}). \label{eq:L1-1}
\end{align}
Here we used $\| w \|_{H^{s_{1}}} \leq \| u \|_{H^{s}} + \| v \|_{H^{s}} \leq C_{0} + C_{1}'$ and $\| u \|_{H^{s_{1} + 1}} \leq C_{0}$.  Applying the Gronwall inequality and $w(0) = 0$ to (\ref{eq:L1-1}), and we have
\begin{equation}
\|w(t)\|_{H^{s_{1}}} \leq C \int_{0}^{t} \| F(\si) \|_{H^{s_{1}}} d \si. \nonumber
\end{equation}
\end{proof}

\begin{lem}\label{lem:bGWL2}
Let $F = v_{t} + v v_{x}$ and $F(t) = F(t,t)$. Under the same assumptions of Proposition \ref{prop:bootL1}, there exists $C = C(\|u_{0}\|_{H^{s}}, C_{1}' , s, s_{1}, T)>0$ such that
\begin{equation}
\sup_{t \in [0,n \Delta t]} \| F(t) \|_{H^{s_{1}}} \leq C \Delta t.
\label{eq:FL1}
\end{equation}
\end{lem}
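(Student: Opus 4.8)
The plan is to localize everything to a single linear step of the scheme and establish the pointwise-in-$t$ bound $\|F(t)\|_{H^{s_1}} \lec \ta'\,\|v(t,t_{l-1})\|_{H^s}^2$, where $\ta'$ is the elapsed ``$\ta$-time'' at $t$; since $\ta'\le\Delta t$ and $\|v(t,t_{l-1})\|_{H^s}\le C_1'$ by \eqref{eq:LHs}, \eqref{eq:FL1} follows at once. First I would note that on the Burgers pieces $\Om_1^{(l)}$ equation \eqref{eq:B} gives $F\equiv 0$, so only the linear pieces contribute to $F(t)=F(t,t)$. Fix $t\in(0,n\Delta t]$, let $l$ be the unique index with $t_{l-1}<t\le t_l$ (so $(t,t)\in\Om_2^{(l)}$), and put $\ta':=t-t_{l-1}\in(0,\Delta t]$, $a:=v(\cdot,t_{l-1})$, and $S(\si):=\F^{-1}[e^{\si k(\x)}\F]$; by \eqref{K1} one has $\Re k\le 0$, hence $\|S(\si)f\|_{H^r}\le\|f\|_{H^r}$ for all $\si\ge 0$ and $r$. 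On $\Om_2^{(l)}$ the solution of \eqref{eq:L} is $v(t,\ta)=S(\ta-t_{l-1})a$, with $a$ solving the inviscid Burgers equation in the first variable on $(t_{l-1},t_l]$ by \eqref{eq:B}; thus $v_t(t,t)=S(\ta')a_t=-S(\ta')(aa_x)$ while $v_x(t,t)=S(\ta')a_x$ because $S(\ta')$ commutes with $\p_x$. Using $aa_x=\tfrac12(a^2)_x$ and again that $S(\ta')$ commutes with $\p_x$, this yields the key identity
\[
F(t) = (S(\ta')a)\,\p_x(S(\ta')a) - S(\ta')(aa_x) = \tfrac12\,\p_x\big[\,(S(\ta')a)^2 - S(\ta')(a^2)\,\big].
\]

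Passing to Fourier variables,
\[
\ha{F(t)}(\x) = \frac{i\x}{2}\int_\R \Big(e^{\ta'(k(\x-\x_1)+k(\x_1))} - e^{\ta' k(\x)}\Big)\,\ha a(\x-\x_1)\,\ha a(\x_1)\,d\x_1 ,
\]
so the matter reduces to estimating the symbol difference together with the factor $\x$. For the difference of exponentials I would write it as $\int_0^{\ta'}\p_\si\big(e^{(\ta'-\si)k(\x)}e^{\si(k(\x-\x_1)+k(\x_1))}\big)\,d\si$ and bound both exponential factors by $1$ (legitimate since $\si,\ta'-\si\ge 0$ and $\Re k\le 0$), obtaining
\[
\Big|\,e^{\ta'(k(\x-\x_1)+k(\x_1))} - e^{\ta' k(\x)}\,\Big| \le \ta'\,\big|\,k(\x-\x_1)+k(\x_1)-k(\x)\,\big| .
\]
For the factor $\x$ I would use the algebraic rearrangement
\[
\x\big(k(\x-\x_1)+k(\x_1)-k(\x)\big) = \x_1 k(\x-\x_1) + (\x-\x_1)k(\x_1) - \big(\x k(\x) - \x_1 k(\x_1) - (\x-\x_1)k(\x-\x_1)\big),
\]
whose last term is $\lec |\x_1|\LR{\x-\x_1}^p + |\x-\x_1|\LR{\x_1}^p$ by \eqref{K2} (taking $\x_1$, $\x-\x_1$ in the roles of $\x$, $\y$) and whose first two terms are bounded by the same quantity via \eqref{K1}; hence $|\x\,(k(\x-\x_1)+k(\x_1)-k(\x))|\lec |\x_1|\LR{\x-\x_1}^p + |\x-\x_1|\LR{\x_1}^p$.

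Combining the three displays gives the pointwise bound $|\ha{F(t)}(\x)| \lec \ta'\,\big[(\LR{\cdot}^p|\ha a|)*(|\cdot|\,|\ha a|)\big](\x)$. Multiplying by $\LR{\x}^{s_1}$, taking $L^2$ norms, and using that $H^{s_1}(\R)$ is an algebra (as $s_1>3/2$), the right-hand side is $\lec \ta'\,\|a\|_{H^{s_1+p}}\|a\|_{H^{s_1+1}}\le \ta'\,\|a\|_{H^s}^2$, the last inequality since $s_1+p\le s$ and $s_1+1\le s$. As $\ta'\le\Delta t$ and $\|a\|_{H^s}=\|v(t,t_{l-1})\|_{H^s}\le C_1'$ by Lemma \ref{lem:LHs}, we get $\|F(t)\|_{H^{s_1}}\lec (C_1')^2\,\Delta t$ uniformly for $t\in[0,n\Delta t]$ (and $F(0)=0$), which is \eqref{eq:FL1}.

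The one step that really has to be arranged correctly is the commutator identity for $F$: only after pulling $\p_x$ outside the bracket does the ``bad'' symbol combination $k(\x-\x_1)+k(\x_1)-k(\x)$ appear multiplied by the output frequency $\x$, which is exactly the combination controlled by \eqref{K2}. If one instead estimated the symbol difference by the crude bound $|k(\x-\x_1)+k(\x_1)-k(\x)|\lec\LR{\x}^p$, one would lose an extra $\min\{1,p\}$ derivatives and the $H^{s-\max\{1,p\}}$ target would be out of reach; after that, the bilinear estimate and the bootstrap-free conclusion are routine.
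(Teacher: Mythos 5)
Your proof is correct, and it reaches the same essential cancellation as the paper's, but it gets there by a different route. The paper derives the forced linear equation $\partial_{\ta}F - KF = X$ on each $\Om_{2}^{(l)}$, observes that $X$ has the Fourier symbol $\tfrac{i}{2}\x\{k(\x-\x_{1})+k(\x_{1})-k(\x)\}$ (written there as $\x k(\x)-\x_{1}k(\x_{1})-(\x-\x_{1})k(\x-\x_{1})$ plus the harmless $v_{x}Kv$, $vKv_{x}$ terms), bounds $\|X\|_{H^{s_{1}}}\leq C$ by \eqref{K2}, and integrates the energy inequality $\partial_{\ta}\|F\|_{H^{s_{1}}}\leq C$ from the vanishing data $F(t,t_{k})=0$. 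You instead solve the step explicitly, write $F$ as the commutator $\tfrac12\partial_{x}[(S(\ta')a)^{2}-S(\ta')(a^{2})]$, and apply the fundamental theorem of calculus in the semigroup parameter; your $\si$-integral is exactly the Duhamel representation $F(t,\ta)=\int S(\ta-\si)X(t,\si)\,d\si$ of the paper's equation, and your rearrangement of $\x(k(\x-\x_{1})+k(\x_{1})-k(\x))$ reproduces the paper's use of \eqref{K2} together with $|k(\x)|\lec\LR{\x}^{p}$. So the two arguments are the Duhamel and energy forms of one computation; yours has the small advantage of making the factor $\ta'\leq\Delta t$ and the dissipativity of $S$ completely explicit without a Gronwall step, while the paper's differential-inequality form is what generalizes directly to the Strang analysis in Section 4 (Remark \ref{rem:FXG}). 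One cosmetic point: the final bilinear bound is not literally the algebra property of $H^{s_{1}}$ but the convolution estimate $\|\LR{\x}^{s_{1}}(f*g)\|_{L^{2}}\lec\|\LR{\x}^{s_{1}}f\|_{L^{2}}\|g\|_{L^{1}}+\|f\|_{L^{1}}\|\LR{\x}^{s_{1}}g\|_{L^{2}}$ used in \eqref{eq:L4+}; with that phrasing the losses $s_{1}+p\leq s$ and $s_{1}+1\leq s$ come out exactly as in the paper.
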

\begin{proof}
By (\ref{eq:L}) and the definition of $F$, the forcing term $F$ satisfies
\begin{equation}
\partial_{\tau} F(t,\ta) - KF(t,\ta) = X(t,\ta) \label{eq:L2}
\end{equation}
in $(t,\ta) \in \Pi_{G}^{(n)}$, where $X(t,\ta) = X_{1}(t,\ta) + X_{2}(t,\ta)$ and $X_{1}(t,\ta)$ and $X_{1}(t,\ta)$ are defined as below.
\begin{equation}
X_{1} (t,\ta) =  - \Big{\{} \frac{1}{2} K(v^{2})_{x} - v K v_{x} \Big{\}} \, , \, X_{2} (t,\ta) = - v_{x} K v  \nonumber
\end{equation}
By (\ref{eq:L2}), we get
\begin{equation}
\partial_{\ta} \| F(t,\ta) \|_{H^{s}}^{2} = 2 \langle F , KF \rangle_{H^{s}} + 2 \langle F,X \rangle_{H^{s}}.
\label{eq:L3}
\end{equation}
The first term in (\ref{eq:L3}) is equal to or less than 0 because of (\ref{K1}).
For the second term, by (\ref{K2}), the Sovolev inequality, and Lemma \ref{lem:LHs}, we have
\begin{align}
\| X_{1} \|_{H^{s_{1}}} &= \frac{1}{2} \| \langle \xi \rangle^{s_{1}} \int_{\R} \{ \x k(\x) - \x_{1} k(\x_{1}) - (\x - \x_{1}) k(\x - \x_{1}) \} \ha{v}(t,\ta,\x - \x_{1}) \ha{v}(t,\ta,\x_{1}) d \x_{1} \|_{L^{2}} \nonumber \\
&\quad \leq C ( \| \langle \x \rangle^{s_{1}} |\x| |\ha{v}| \|_{L^{2}} \| \langle \x \rangle^{p} |\ha{v}| \|_{L^{1}} + \| \langle \x \rangle^{s_{1}} \langle \x \rangle^{p} | \ha{v}| \|_{L^{2}} \| |\x| |\ha{v}| \|_{L^{1}})  \nonumber \\
&\quad \leq C ( \| v \|_{H^{s_{1}+1}} \| v \|_{H^{s_{1} + p}} + \| v \|_{H^{s_{1}+p}} \| v \|_{H^{s_{1} + 1}} ) \leq C, \label{eq:L4+}
\end{align}
where we used $s = s_{1} + \max \{ 1,p \}$ and $s_{1} > 3/2$ to have the last inequality in $(\ref{eq:L4+})$. Under the same assumption of  $s$ and $s_{1}$ written above, by the Sobolev inequality, we have $\| X_{2} \|_{H^{s_{1}}} \leq C \| v \|_{H^{s_{1} + 1}} \| v \|_{H^{s_{1} + p}} \leq C$. Therefore, we have 
\begin{equation}
\partial_{\ta} \| F(t,\ta) \|_{H^{s}} \leq C.\label{eq:L6}
\end{equation}
Take $k \in \N$ such that $(t,\ta) \in [t_{k},t_{k+1}]^{2}$. Since $F(t,t_{k}) = 0$,
\begin{align}
\| F(t,\ta) \|_{H^{s}} &= \| F(t,\ta) \|_{H^{s}} - \| F(t,t_{k}) \|_{H^{s}} \nonumber \\
&= \int_{t_{k}}^{\ta} \partial_{\si} \| F(t,\si) \|_{H^{s}} d \si \leq C|\ta - t_{k}| \leq C \Delta t. \nonumber
\end{align}
\end{proof}
As a corollary of Proposition \ref{prop:bootL1}, we have the following.
\begin{cor}\label{cor:rmL}
Let $T>0$ and $s_{1} = s - \max \{1,p \} > 3/2$. Then there exist $\overline{\Delta t}_{*} = \overline{\Delta t}_{*}(C_{0}, s, s_{1}, T) > 0$ and $C_{*} = C_{*}(C_{0},s,s_{1},T)>0$ such that, for all $\Delta t \leq \overline{\Delta t}_{*}$, $n \in \N$ satisfying $n \Delta t \leq T$, and $v \in C(\Pi_{G}^{(n)} : H^{s_{1}})$ satisfying $(\ref{eq:B})$--$(\ref{IVL})$ on $\Pi_{G}^{(n)}$, it follows that $v \in C(\Pi_{G}^{(n)} : H^{s})$ and
\begin{align}
\sup_{(t,\ta) \in \Pi_{G}^{(n)}} \| v(t,\ta) \|_{H^{s_{1}}} &\leq 2 C_{0}, \label{eq:Lgoal} \\
\sup_{t \in [0, n \Delta t]} \| v(t,t) - u(t) \|_{H^{s_{1}}} &\leq C_{*} \Delta t. \label{eq:vtt-u}
\end{align}
\end{cor}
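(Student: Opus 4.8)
The plan is to set $C_1 := 4C_0$ and apply the bootstrap Lemma \ref{lem:bootstrap} with $\si = s_1$ and this $C_1$. Hypothesis (A) is immediate from $(\ref{IVL})$ and $(\ref{bdd-u})$: $\|v(0,0)\|_{H^{s_1}} = \|u_0\|_{H^{s_1}} \le \|u_0\|_{H^s} \le C_0 \le C_1$. The substance is hypothesis (B): I must produce $\overline{\Delta t}_* = \overline{\Delta t}_*(C_0,s,s_1,T) > 0$ such that, for $\Delta t \le \overline{\Delta t}_*$, every $v \in C(\Pi_G^{(n)} : H^{s_1})$ solving $(\ref{eq:B})$--$(\ref{IVL})$ with the a priori bound $\sup_{\Pi_G^{(n)}}\|v\|_{H^{s_1}} \le C_1$ in fact satisfies $\sup_{\Pi_G^{(n)}}\|v\|_{H^{s_1}} \le C_1/2 = 2C_0$. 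The same argument will, \emph{en route}, produce both the regularity upgrade $v \in C(\Pi_G^{(n)} : H^s)$ and the bound $(\ref{eq:vtt-u})$; since Lemma \ref{lem:bootstrap} turns (A)+(B) into the unconditional bound $(\ref{eq:Lgoal})$ (which then feeds back the a priori hypothesis needed for those two consequences), the corollary follows.

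So fix $v$ with $\sup_{\Pi_G^{(n)}}\|v\|_{H^{s_1}} \le C_1$. First I would upgrade the regularity by induction over the cells $l = 1,\dots,n$: the datum $v(0,0)=u_0$ lies in $H^s$; on each linear cell $v$ equals $\F^{-1}[e^{k(\x)(\ta-t_{l-1})}\F v(t,t_{l-1})]$, which by $\Re k \le 0$ is non-expansive in $H^s$ and keeps $H^s$-regularity; and on each Burgers cell $(\ref{eq:lemA2})$ gives $\tfrac{d}{dt}\|v(t,t_{l-1})\|_{H^s}^2 \le C\|v(t,t_{l-1})\|_{H^{s_1}}\|v(t,t_{l-1})\|_{H^s}^2 \le 4CC_0\|v(t,t_{l-1})\|_{H^s}^2$, so the $H^s$-norm cannot blow up ahead of the (bounded) $H^{s_1}$-norm. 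Hence, once $\Delta t \le \overline{\Delta t}_B(s,C_1')$ with $C_1' := C_0 e^{4CC_0 T}$, Corollary \ref{cor:sol} with $\si = s$ can be iterated cell by cell and uniqueness identifies the resulting $H^s$ solution with $v$, so $v \in C(\Pi_G^{(n)} : H^s)$; Lemma \ref{lem:LHs} (with $\si = s_1$, $C_1 = 4C_0$, using $\|u_0\|_{H^s}\le C_0$) then bounds $\sup_{\Pi_G^{(n)}}\|v\|_{H^s}$ by a constant $C_1' = C_1'(C_0,s,s_1,T)$, and Proposition \ref{prop:bootL1} applied with this $C_1'$ yields $(\ref{eq:vtt-u})$ with some $C_* = C_*(C_0,s,s_1,T)$.

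It remains to spread the diagonal estimate $\|v(t,t)\|_{H^{s_1}} \le \|u(t)\|_{H^{s_1}} + C_*\Delta t \le C_0 + C_*\Delta t$ to all of $\Pi_G^{(n)}$. For $(t,\ta) \in \Om_2^{(l)}$ the linear-flow formula and $\Re k \le 0$ give $\|v(t,\ta)\|_{H^{s_1}} \le \|v(t,t_{l-1})\|_{H^{s_1}}$, so it suffices to bound $\|v(t,t_{l-1})\|_{H^{s_1}}$ for $t \in [t_{l-1},t_l]$ (which also covers $\Om_1^{(l)}$). Writing $v(t,t_{l-1}) = v(t_{l-1},t_{l-1}) - \int_{t_{l-1}}^{t}(vv_x)(\si,t_{l-1})\,d\si$, using that $H^{s_1+1}$ is an algebra with $s_1+1 \le s$ to get $\|(vv_x)(\si,t_{l-1})\|_{H^{s_1}} \le C\|v\|_{H^{s_1+1}}^2 \le C(C_1')^2$, and invoking the diagonal estimate at the grid point $(t_{l-1},t_{l-1})$, one obtains $\|v(t,t_{l-1})\|_{H^{s_1}} \le C_0 + (C_* + C(C_1')^2)\Delta t =: C_0 + C_3\Delta t$. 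Choosing $\overline{\Delta t}_* := \min\{\overline{\Delta t}_B(s,C_1'),\, C_0/C_3\}$ forces the right-hand side to be $\le 2C_0$, establishing (B); Lemma \ref{lem:bootstrap} then gives $(\ref{eq:Lgoal})$, while $(\ref{eq:vtt-u})$ and $v \in C(\Pi_G^{(n)}:H^s)$ were obtained above (the a priori hypothesis $\sup_{\Pi_G^{(n)}}\|v\|_{H^{s_1}}\le C_1$ that those steps require is now a theorem).

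The step I expect to be the main obstacle is the regularity upgrade $v \in C(\Pi_G^{(n)} : H^s)$: one must iterate the local $H^s$-theory of Corollary \ref{cor:sol} across all $n$ cells while keeping the $H^s$-norm dominated by the a priori $H^{s_1}$-bound, so that the local existence times never collapse — this is exactly where $(\ref{eq:lemA2})$ and a continuation argument enter. The remaining estimates recycle the commutator/forcing bounds already established, and the only bookkeeping is to merge the finitely many smallness conditions on $\Delta t$ into a single $\overline{\Delta t}_*$.
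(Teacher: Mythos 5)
Your proposal is correct and follows essentially the same route as the paper: bootstrap with $C_{1}=4C_{0}$, Lemma \ref{lem:LHs} to upgrade to an $H^{s}$ bound under the a priori $H^{s_{1}}$ hypothesis, Proposition \ref{prop:bootL1} for the diagonal error, and an $O(\Delta t)$ single-cell perturbation to carry the bound to off-diagonal points of $\Pi_{G}^{(n)}$. The only cosmetic differences are that you make the persistence-of-$H^{s}$-regularity step explicit (the paper absorbs it into Lemma \ref{lem:LHs}) and you spread the diagonal estimate via the semigroup contraction plus integrating the Burgers equation from the grid point, whereas the paper integrates $\partial_{\ta}v=Kv$ from $(t,t)$ to $(t,\ta)$ directly.
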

\begin{proof}
First, we prove \eqref{eq:Lgoal}. For that purpose, we only need to prove (A) and (B) of Lemma \ref{lem:bootstrap} with $C_{1} = 4 C_{0}$.  Obviously, (A) holds by \eqref{bdd-u}. Next, we prove (B). Assume that $\sup_{(t,\ta) \in \Pi_{G}^{(n)}} \| v(t,\ta) \|_{H^{s_{1}}} \leq 4 C_{0}$. By Lemma \ref{lem:LHs} with $C_{1} = 4 C_{0}$, there exists $C = C( C_{0}, s, s_{1}, T) >0$ such that $\sup_{(t,\ta) \in \Pi_{G}^{(n)}} \| v(t,\ta) \|_{H^{s}} \leq C$. By $s_{1} + p \leq s$, (\ref{K1}), and \eqref{eq:L}, we have
\begin{align}
\|v(t,\ta) - v(t,t)\|_{H^{s_{1}}} &\leq \int_{t}^{\ta} \|\partial_{\sigma} v(t,\sigma)\|_{H^{s_{1}}} d \sigma \nonumber \\
&\leq |t - \ta| \sup_{(t,\ta) \in \Pi_{G}^{(n)}} \|v(t,\ta)\|_{H^{s_{1} + p}} \leq C \Delta t. \label{eq:vtt-vtta}
\end{align}
Then, by Proposition \ref{prop:bootL1}, it follows that for all $\Delta t \leq \overline{\Delta t}_{*}$,
\begin{align}
\| v(t,\ta) \|_{H^{3/2 + \epsilon}} &\leq \| v(t,\tau) - v(t,t) \|_{H^{s_{1}}} + \| v(t,t) - u(t) \|_{H^{s_{1}}} + \| u(t) \| _{H^{s}} \nonumber \\
&\leq C \Delta t + C_{0} \leq 2 C_{0}. \nonumber
\end{align}
Here we take $\overline{\Delta t}_{*}$ such that $C \overline{\Delta t}_{*} = C_{0}$. Thus, we obtain (B). Therefore, we have \eqref{eq:Lgoal} by Lemma \ref{lem:bootstrap}.\\
By applying Lemma \ref{lem:LHs} to \eqref{eq:Lgoal}, it is also proved that $v \in C(\Pi_{G}^{(n)} : H^{s})$ and there exists $C = C(C_{0}, s, s_{1}, T) >0$ such that $\sup_{(t,\ta) \in \Pi_{G}^{(n)}} \| v(t,\ta) \|_{H^{s}} \leq C$. Therefore, by Proposition \ref{prop:bootL1}, we have (\ref{eq:vtt-u}).
\end{proof}

Finally we prove Theorem \ref{thm:L}.
\begin{proof}
Let $\overline{\Delta t} = \min \{ \overline{\Delta t}_{*} , \overline{\Delta t}_{B} (s_{1}, 2C_{0}) \}$, where $\overline{\Delta t}_{B} = \overline{\Delta t}_{B}(\si,M)$ is defined in Corollary \ref{cor:sol}. Note that $\overline{\Delta t} = \overline{\Delta t}(C_{0}, s, s_{1}, T)$.

We put conditions $(A)_{n}$ and $(B)_{n}$ for $n \in \N$ satisfying $1 \leq n \leq N$ as below.\\
$(A)_{n}$ : For any $\Delta t \leq \overline{\Delta t}$, there exists a unique solution $v \in C(\Pi_{G}^{(n)}:H^{s_{1}}(\R))$ which satisfies $(\ref{eq:B})$--$(\ref{IVL})$. \\
$(B)_{n}$ : For any $\Delta t \leq \overline{\Delta t}$, the solution $v \in C(\Pi_{G}^{(n)}:H^{s_{1}}(\R))$ of \eqref{eq:B}--\eqref{IVL} satisfies $v \in C(\Pi_{G}^{(n)}:H^{s}(\R))$, \eqref{eq:Lgoal}, and \eqref{eq:vtt-u}.

The proof is by induction on $n$. Obviously, $(A)_{1}$ and $(B)_{1}$ are true. 
Let $l \in \N$ such that $1 \leq l \leq N-1$. We assume $(A)_{l}$ and $(B)_{l}$, and prove $(A)_{l+1}$ and $(B)_{l+1}$. First, we prove that $(A)_{l+1}$ holds. Since $\Delta t \leq \overline{\Delta t} \leq \overline{\Delta t}_{B*}$, we have $(A)_{l+1}$ by \eqref{eq:Lgoal} with $n = l$ and Corollary \ref{cor:sol}. Next, we prove $(B)_{l+1}$ from $(A)_{l+1}$, but this has already been proved as Corollary \ref{cor:rmL}. 

By induction on $n$, we have Theorem \ref{thm:L} for $C = C_{2}(C_{0}, C_{1}'|_{C_{1} = 2C_{0}}, s, s_{1}, T)$, where $C_{2}$ is the constant in Proposition \ref{prop:bootL1}.
\end{proof}
\end{section}

\begin{section}{Estimate for the Strang splitting}

In this section, we prove Theorem \ref{thm:S}. We put $w(t) = v(t,t) - u(t)$, $\La_{1}^{(n)} = \cup_{l = 1}^{n}(\Om_{1,1}^{(l)} \cup \Om_{2,1}^{(l)})$, $\La_{2}^{(n)} = \cup_{l = 1}^{n}(\Om_{2,2}^{(l)} \cup \Om_{1,2}^{(l)})$, and tilde is a time-shift operator, that is $\ti{f}(t,x) = f (t + \Delta t / 2, x)$.

The main proposition in this section is Proposition \ref{prop:2order} below. We obtain Theorem \ref{thm:S} in the same manner as in Section 3 if we use Proposition \ref{prop:2order} instead of Proposition \ref{prop:bootL1}. Therefore, we only need to prove Proposition \ref{prop:2order}.
\begin{prop}\label{prop:2order}
Let $\Delta t>0$, $T>0$, $n \in \N$ such that $n \Delta t \leq T$, and $s_{2} = s - 3 \max \{1,p \} > 3/2$. Assume that there exists a unique solution $v \in C(\Pi_{S}^{(n)} : H^{s})$ of $(\ref{eq:SB})$--$(\ref{IVS})$ and a constant $C_{1}'>0$ satisfies $(\ref{eq:LHs})$. Then, there exists $C_{2} = C_{2}(C_{0}, C_{1}', s, s_{2}, T)>0$ such that
\begin{equation}
\sup_{t \in [0, t_{n} ]} \| w(t) \|_{H^{s_{2}}} \leq C_{2} (\Delta t)^{2}.
\label{eq:goalS}
\end{equation}
\end{prop}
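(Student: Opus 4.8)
The plan is to mimic the Godunov argument of Section 3, splitting the error $w(t) = v(t,t)-u(t)$ into a Gronwall step and a forcing-term estimate, but now tracking the cancellation that produces a gain of one extra power of $\Delta t$. As in Lemma \ref{lem:bGWL1}, write $F = v_t + vv_x$, $F(t) = F(t,t)$, and derive the evolution equation for $w$: since $u$ solves \eqref{eq:DBO} and $v$ solves the Strang equations \eqref{eq:SB}--\eqref{eq:SL} alternately, one gets $\partial_t w = -ww_x - (uw)_x + Kw + F$ on the diagonal, exactly as in \eqref{eq:w'L}. Proposition \ref{prop:ibp}, the bound \eqref{bdd-u}, and \eqref{eq:LHs} (which holds by hypothesis) then give, via Gronwall and $w(0)=0$,
\begin{equation}
\sup_{t\in[0,t_n]} \|w(t)\|_{H^{s_2}} \leq C \int_0^{t_n} \|F(t')\|_{H^{s_2}}\, dt'. \nonumber
\end{equation}
So it suffices to show $\sup_{t}\|F(t)\|_{H^{s_2}} \le C(\Delta t)^2$, which is the Strang analogue of Lemma \ref{lem:bGWL2} — and this is where the real work lies.

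For the forcing estimate, first I would compute the equation satisfied by $F$ on each subregion of $\Pi_S^{(n)}$. On the linear strips ($\Sigma_{S,2}$, where $\partial_\tau v = Kv$) one finds $\partial_\tau F - KF = X$ with $X = X_1 + X_2$, $X_1 = -\{\tfrac12 K(v^2)_x - vKv_x\}$ and $X_2 = -v_x Kv$, just as in \eqref{eq:L2}; the commutator bound \eqref{K2} together with the Sobolev inequality and \eqref{eq:LHs} gives $\|X\|_{H^{s_2+\max\{1,p\}}} \le C$, hence $\|\partial_\tau\|F\|_{H^{s_2+\max\{1,p\}}}\| \le C$ on these strips. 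The key structural point for Strang is that $F$ vanishes not merely at the left endpoint of each strip but that the two half-steps of $\Phi_A$ straddling a full $\Phi_B$ step are symmetric about the midpoint $t_{n-1/2}$; one exploits this by examining the quantity $F$ together with its $\tau$-derivative at $t_{n-1}$, using the time-shift operator $\ti{\cdot}$ and the symmetry of $\La_1^{(n)}$, $\La_2^{(n)}$ about $t_{n-1/2}$, so that the $O(\Delta t)$ contributions from the two halves cancel and only the $O((\Delta t)^2)$ remainder survives. Concretely, I would Taylor-expand $\|F(t,\tau)\|_{H^{s_2}}$ (or better, $F$ itself in $H^{s_2}$) around $\tau = t_{n-1/2}$ on each cell $[t_{n-1},t_n]^2$, note $F(t,t_{n-1/2})$ and the relevant first-order term are controlled by $\Delta t$ (not $1$) because of the midpoint symmetry built into the Strang scheme, and conclude $\|F(t,\tau)\|_{H^{s_2}} \le C(\Delta t)^2$ using the uniform $H^{s_2 + 2\max\{1,p\}}$ and $H^{s_2+3\max\{1,p\}}$ bounds (this is why the loss is $3\max\{1,p\}$ rather than $\max\{1,p\}$: two derivatives go into the two extra $\tau$-differentiations needed to see the second-order remainder, on top of the one lost in the commutator).

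The main obstacle is precisely making the midpoint-symmetry cancellation rigorous at the level of Sobolev norms: one cannot just Taylor-expand a norm, so I expect to work with $F$ as an $H^{s_2}$-valued function, differentiate \eqref{eq:L2} (and its Burgers-side counterpart) once more in $\tau$ to produce $\partial_\tau^2 F = K\partial_\tau F + \partial_\tau X = K(KF+X) + \partial_\tau X$, bound $\partial_\tau X$ in $H^{s_2+2\max\{1,p\}}$ (another application of \eqref{K2}, the Sobolev inequality, \eqref{eq:L}, and \eqref{eq:LHs}, now costing a further $\max\{1,p\}$ derivatives), and then integrate twice from $t_{n-1/2}$, matching the two half-step contributions so that the linear-in-$\Delta t$ terms telescope away across $\La_1^{(n)}$ and $\La_2^{(n)}$. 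A secondary technical nuisance is bookkeeping the four subcells $\Om_{1,1}^{(n)}, \Om_{1,2}^{(n)}, \Om_{2,1}^{(n)}, \Om_{2,2}^{(n)}$ and the diagonal restriction $t \mapsto (t,t)$ consistently with the time-shift $\ti{v}(t) = v(t+\Delta t/2)$; once the per-cell estimate $\|F(t,\tau)\|_{H^{s_2}}\le C(\Delta t)^2$ is in hand, summing over the $\le N \le T/\Delta t$ cells in the Gronwall integral gives the $C_2(\Delta t)^2$ bound, and Proposition \ref{prop:2order} follows. The remaining deduction of Theorem \ref{thm:S} — the bootstrap on $\|v\|_{H^{s_2}}$, promotion to $H^s$ via Lemma \ref{lem:LHs}, local solvability via Corollary \ref{cor:sol}, and induction on $n$ — is word-for-word the argument of Corollary \ref{cor:rmL} and the proof of Theorem \ref{thm:L}, with $s_1$ replaced by $s_2$ and $\Delta t$ by $(\Delta t)^2$ in the error bound, as the paper already indicates.
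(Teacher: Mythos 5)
There is a genuine gap at the heart of your reduction. You reduce the proposition to the pointwise-in-time bound $\sup_{t}\|F(t)\|_{H^{s_{2}}}\le C(\Delta t)^{2}$, but this bound is false. On the first half of each cell the diagonal lies in $\Om_{2,1}^{(l)}$, where $v$ solves the linear equation in $\ta$ and the forcing $F=v_{t}+vv_{x}$ satisfies $\partial_{\ta}F-KF=X$ with $F(t,t_{l-1})=0$; hence $F(t,t)=\int_{t_{l-1}}^{t}e^{(t-\si)K}X(t,\si)\,d\si$, which is genuinely of size $(t-t_{l-1})\,X\sim\Delta t$ with a nonvanishing leading coefficient. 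The same holds for $G=v_{\ta}-Kv$ on the second half. No ``midpoint symmetry'' makes the forcing small at a single point $(t,t)$: the second-order accuracy of Strang comes from a cancellation between the $O(\Delta t)$ contribution $+(t-t_{l-1})X(t_{l-1})$ accumulated on the first half-step and the contribution $-(t-t_{l-1/2})X(t_{l-1/2})$ accumulated on the second half-step, i.e.\ between forcing values at two \emph{different} times separated by $\Delta t/2$. Your Gronwall step places the $H^{s_{2}}$-norm inside the time integral, $\|w(t)\|\le C\int_{0}^{t}\|F(t')\|\,dt'$, which destroys exactly this cancellation; with only the true pointwise bound $\|H(t)\|\lesssim\Delta t$ you recover nothing better than the first-order estimate of Lemma \ref{lem:bGWL2}.

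The paper's proof is organized precisely to capture that cross-half-step cancellation before any norm is taken: it introduces the half-step shift $\ti{w}(t)=w(t+\Delta t/2)$ and proves the two estimates \eqref{eq:ave} and \eqref{eq:dif} separately. For the average $z=w+\ti{w}$, the forcing in the equation \eqref{eq:z2} is the paired quantity $H(t)+\ti{H}(t)=F(t)+G(t\pm\Delta t/2)$, and Lemma \ref{lem:FG} shows by Taylor expansion (using $F(t_{l-1})=G(t_{l-1\pm1/2})=0$, $F_{t}(t_{l-1})=G_{\ta}(t_{l-1\pm1/2})=0$, Remark \ref{rem:FXG}, and the Lipschitz bound on $X$ from Lemma \ref{lem:Lip}) that this pairing is $O((\Delta t)^{2})$; the Gronwall argument for $z$ additionally needs $\|z(0)\|_{H^{s_{2}}}\le C(\Delta t)^{2}$ (Lemma \ref{lem:small}) and the first-order bounds $\|w\|_{H^{s_{2}+1}}\le C\Delta t$, $\|\ti{u}-u\|_{H^{s_{2}+1}}\le C\Delta t$ from Theorem \ref{thm:L} and Remark \ref{rem:rev} to absorb the quadratic terms $(w\ti{w})_{x}$ and $(\ti{w}(\ti{u}-u))_{x}$. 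The difference estimate \eqref{eq:dif} is then a separate, nontrivial argument (Lemma \ref{lem:dif2}) comparing $v$ over each half-cell with an auxiliary exact solution $V$ of the full equation; your proposal omits this entirely, and without it the shift trick cannot be closed. So while your instinct about where the cancellation lives is right, the vehicle you chose for it (a pointwise bound on $F$ fed into a scalar Gronwall inequality) cannot deliver the result, and the missing difference estimate is not a bookkeeping detail but roughly half of the proof.
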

For the proof of Proposition \ref{prop:2order}, we only need to prove
\begin{align}
\sup_{t \in [0, t_{n-1/2} ]} \| w(t) + \ti{w}(t) \|_{H^{s_{2}}} &\leq C' (\Delta t)^{2} \label{eq:ave} \\
\sup_{t \in [0, t_{n-1/2}]} \| w(t) - \ti{w}(t) \|_{H^{s_{2}}} &\leq C' (\Delta t)^{2} \label{eq:dif}
\end{align}
instead of (\ref{eq:goalS}), since
\begin{align}
\sup_{t \in [0, t_{n}]} \| w(t) \|_{H^{s_{2}}} &\leq \sup_{t \in [0, t_{n-1/2}]} \| w(t) \|_{H^{s_{2}}} + \sup_{t \in [0, t_{n-1/2}]} \| \ti{w}(t) \|_{H^{s_{2}}} \nonumber \\
&\leq \sup_{t \in [0, t_{n-1/2}]} \| w(t) + \ti{w}(t) \|_{H^{s_{2}}} + \sup_{t \in [0, t_{n-1/2}]} \| w(t) - \ti{w}(t) \|_{H^{s_{2}}}. \nonumber
\end{align}

First, we prepare some notations to prove (\ref{eq:ave}). We put
\begin{align}
F (t,\tau) &=
\begin{cases}
v_{t}(t,\tau) + v(t,\tau) \, v_{x}(t,\tau), \hs{12pt} (t,\tau) \in \La_{1}^{(n)}, \\
0, \hs{124pt} (t,\tau) \in \La_{2}^{(n)},
\end{cases}
\label{eq:ftf}\\
G(t,\tau) &=
\begin{cases}
0, \hs{124pt} (t,\tau) \in \La_{1}^{(n)},\\
v_{\tau}(t,\tau) - Kv(t,\tau), \hs{37pt} (t,\tau) \in \La_{2}^{(n)}. 
\end{cases}
\label{eq:ftg}
\end{align}
We also define the total forcing term $H$ in $\Pi_{S}^{(n)}$ as $H(t,\tau) = F(t,\tau) + G(t,\tau)$ and $H(t) = H(t,t)$.
By $(\ref{eq:DBO})$, $(\ref{eq:SL})$, and $(\ref{eq:ftf})$ for the case  $(t,t) \in \La_{1}^{(n)}$ and $(\ref{eq:DBO})$, $(\ref{eq:SB})$, and $(\ref{eq:ftg})$ for the case $(t,t) \in \La_{2}^{(n)}$, $w'$ is written in $t \in [0, t_{n}]$ as
\begin{equation}
w' = - ww_{x} - (uw)_{x} + Kw + H(t). \label{eq:w'}
\end{equation}
For simplicity, we put $z(t) = w(t) + \ti{w}(t)$ in $t \in [0, t_{n-1/2}]$.

Next, we prepare some lemmas to estimate $\| z(t) \|_{H^{s_{2}}}$.
\begin{lem}\label{lem:GWS}
Let $H(t,\ta) = F(t,\ta) + G(t,\ta)$ and $H(t)= H(t,t)$, where $F$ satisfies \eqref{eq:ftf} and $G$ satisfies \eqref{eq:ftg}. Under the same assumption of Proposition \ref{prop:2order}, there exists $C = C (C_{0}, C_{1}' ,s , s_{2}, T)>0$ such that for all $t \in [0, t_{n-1/2}]$,
\begin{align}
&\| z(t) \|_{H^{s_{2}}} \nonumber \\
&\quad \leq \| z(0) \|_{H^{s_{2}}} e^{C t} + C t \{ \sup_{t \in [0, t_{n-1/2}]} \| H(t) + \ti{H}(t) \|_{H^{s_{2}}} \nonumber \\
&\qquad + (\sup_{t \in [0, t_{n}]} \| w \|_{H^{s_{2} + 1}})^{2} + \sup_{t \in [0, t_{n}]} \| w \|_{H^{s_{2} + 1}} \sup_{t \in [0, t_{n-1/2}]} \| \ti{u} - u \|_{H^{s_{2} + 1}} \}.
\label{eq:z}
\end{align}
\end{lem}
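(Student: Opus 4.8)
The plan is to run a standard energy estimate for $z = w + \ti{w}$ in $H^{s_2}$, based on the forcing identity \eqref{eq:w'}. First I would compute $z'$. Since \eqref{eq:w'} holds on all of $[0,t_n]$ and $t+\Delta t/2 \in [0,t_n]$ whenever $t\in[0,t_{n-1/2}]$, evaluating \eqref{eq:w'} at $t+\Delta t/2$ --- using that $\partial_x$ and $K$ commute with the time shift --- gives $\ti{w}' = -\ti{w}\,\ti{w}_x - (\ti{u}\,\ti{w})_x + K\ti{w} + \ti{H}$. Adding this to \eqref{eq:w'} and using the elementary identities $ww_x + \ti{w}\,\ti{w}_x = zz_x - (w\ti{w})_x$ and $(uw)_x + (\ti{u}\,\ti{w})_x = (uz)_x + ((\ti{u}-u)\ti{w})_x$ yields
\[
z' = -zz_x + (w\ti{w})_x - (uz)_x - ((\ti{u}-u)\ti{w})_x + Kz + (H+\ti{H}).
\]

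Next I would estimate $\frac{d}{dt}\|z\|_{H^{s_2}}^2 = 2\langle z, z'\rangle_{H^{s_2}}$ term by term. The term $\langle z, Kz\rangle_{H^{s_2}}$ is nonpositive by \eqref{K1}. For $\langle z, zz_x\rangle_{H^{s_2}}$ and $\langle z, (uz)_x\rangle_{H^{s_2}}$ I would apply part (B) and part (A) of Proposition \ref{prop:ibp}; since $s_2 > 3/2$ and, by \eqref{bdd-u} and \eqref{eq:LHs}, both $\|z\|_{H^{s_2}}$ and $\|u\|_{H^{s_2+1}}$ are bounded by a constant depending only on $C_0$ and $C_1'$ (here $s_2+1\le s$ is used), both of these are $\le C\|z\|_{H^{s_2}}^2$. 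The two bilinear remainders I would bound by the Schwarz inequality together with $\|(fg)_x\|_{H^{s_2}} \le \|fg\|_{H^{s_2+1}} \le C\|f\|_{H^{s_2+1}}\|g\|_{H^{s_2+1}}$ (as $H^{s_2+1}(\R)$ is an algebra and $u,v\in H^{s_2+1}$), and the forcing contribution by $|\langle z, H+\ti{H}\rangle_{H^{s_2}}| \le \|z\|_{H^{s_2}}\|H+\ti{H}\|_{H^{s_2}}$. Dividing by $2\|z\|_{H^{s_2}}$ gives
\[
\frac{d}{dt}\|z\|_{H^{s_2}} \le C\|z\|_{H^{s_2}} + C\big(\|w\|_{H^{s_2+1}}\|\ti{w}\|_{H^{s_2+1}} + \|\ti{u}-u\|_{H^{s_2+1}}\|\ti{w}\|_{H^{s_2+1}} + \|H+\ti{H}\|_{H^{s_2}}\big)
\]
with $C = C(C_0,C_1',s,s_2,T)$.

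Finally I would replace the time-shifted factors by the suprema appearing in \eqref{eq:z}: for $t\in[0,t_{n-1/2}]$ one has $t+\Delta t/2\in[0,t_n]$, hence $\|\ti{w}(t)\|_{H^{s_2+1}} \le \sup_{[0,t_n]}\|w\|_{H^{s_2+1}}$ and likewise $\|w(t)\|_{H^{s_2+1}} \le \sup_{[0,t_n]}\|w\|_{H^{s_2+1}}$, while $\|\ti{u}(t)-u(t)\|_{H^{s_2+1}} \le \sup_{[0,t_{n-1/2}]}\|\ti{u}-u\|_{H^{s_2+1}}$ and $\|H(t)+\ti{H}(t)\|_{H^{s_2}} \le \sup_{[0,t_{n-1/2}]}\|H+\ti{H}\|_{H^{s_2}}$. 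The bracket then becomes constant in $t$, and Gronwall's inequality --- absorbing $e^{CT}$ into $C$ since $t\le T$ --- yields exactly \eqref{eq:z}.

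I do not expect a real obstacle: this is the routine energy step, and the substantive work of controlling $\sup\|H+\ti{H}\|_{H^{s_2}}$, $\sup\|w\|_{H^{s_2+1}}$ and $\sup\|\ti{u}-u\|_{H^{s_2+1}}$ is deferred to subsequent lemmas. The only points needing care are the bookkeeping of which interval each supremum runs over (and checking that the shift by $\Delta t/2$ stays inside $[0,t_n]$), and the justification that $t\mapsto\|z(t)\|_{H^{s_2}}^2$ is $C^1$ so that the energy identity holds. The latter is where the spare regularity $s\ge s_2+3\max\{1,p\}$ enters: it guarantees $z'\in H^{s_2}$ (the worst terms $zz_x$, $(uz)_x$ cost one derivative and $Kz$ costs $p$ derivatives, both at most $3\max\{1,p\}$), so that together with the $C^1$-in-each-variable regularity of $v$ from Corollary \ref{cor:sol} the identity $\frac{d}{dt}\|z\|_{H^{s_2}}^2 = 2\langle z,z'\rangle_{H^{s_2}}$ is rigorous and every $H^{s_2}$-pairing above is absolutely convergent.
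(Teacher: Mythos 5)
Your proposal is correct and follows essentially the same route as the paper: derive the evolution equation for $z=w+\ti w$ by summing \eqref{eq:w'} with its time-shifted version, bound $\langle z,Kz\rangle_{H^{s_2}}\le 0$ via \eqref{K1}, handle $zz_x$ and $(uz)_x$ with Proposition \ref{prop:ibp}, treat the bilinear remainders and forcing by Schwarz/Sobolev, and conclude with Gronwall. (Your sign $+(w\ti w)_x$ in the $z'$ equation is the correct one --- the paper's display \eqref{eq:z2} carries a harmless sign typo there --- and your extra remarks on differentiability and interval bookkeeping are consistent with, if more explicit than, the paper's argument.)
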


\begin{proof}
By $\eqref{eq:w'}$, it follows that for $t \in [0, t_{n-1/2}]$,
\begin{equation}
\ti{w}' = \ti{H}(t) + K \ti{w} - (\ti{u} \ti{w})_{x} - \ti{w} \ti{w}_{x}. \label{eq:wt'}
\end{equation}
By $(\ref{eq:w'})$ and $(\ref{eq:wt'})$, we have the equation for $z'$ in $t \in [0, t_{n-1/2}]$ as below.
\begin{equation}
z' = H(t) + \ti{H}(t) - \Big{(} \frac{1}{2}z^{2} + uz \Big{)}_{x} + Kz - \Big{\{} \ti{w} (\ti{u} - u) + w \ti{w} \Big{\}}_{x}.
\label{eq:z2}
\end{equation}
Then it follows that
\begin{align}
\frac{d}{dt} \| z(t) \|_{H^{s_{2}}}^{2} = \Big{\langle} z , H(t) + \ti{H}(t) - \Big{(} \frac{1}{2}z^{2} + uz \Big{)}_{x} + Kz - \Big{\{} \ti{w} (\ti{u} - u) + w \ti{w} \Big{\}}_{x} \Big{\rangle}_{H^{s_{2}}}. \nonumber
\end{align}
Note that $\langle z, Kz \rangle_{H^{s_{2}}} \leq 0$ from \eqref{K1}. By the Sobolev inequality, Proposition \ref{prop:ibp}, Lemma \ref{lem:LHs}, and \eqref{bdd-u}, it follows that for $t \in [0, t_{n-1/2}]$,
\begin{align}
\frac{d}{dt} \| z(t) \|_{H^{s_{2}}} &\leq C \{ \| z \|_{H^{s_{2}}} + \| H(t) + \ti{H}(t) \|_{H^{s_{2}}} \nonumber \\
&\quad + \| w \|_{H^{s_{2} + 1}} \| \ti{w} \|_{H^{s_{2} + 1}} + \| \ti{w} \|_{H^{s_{2} + 1}} \| \ti{u} - u \|_{H^{s_{2} + 1}} \}.
\label{eq:z3}
\end{align}
Here we used the following inequality. 
\begin{align}
\| z \|_{H^{s_{2}+1}} &\leq \| w \|_{H^{s_{2}+1}} + \| \ti{w} \|_{H^{s_{2}+1}} \nonumber \\
&\leq \| u \|_{H^{s_{2}+1}} + \| v \|_{H^{s_{2}+1}} + \| \ti{u} \|_{H^{s_{2}+1}} + \| \ti{v} \|_{H^{s_{2}+1}} \leq C. \nonumber
\end{align}
Applying the Gronwall inequality to (\ref{eq:z3}), we have (\ref{eq:z}).
\end{proof}

\begin{lem}\label{lem:Lip}
Let $X = - \{ K(v^{2})_{x}/2 - v_{x} Kv - v Kv_{x} \} $. Under the same assumption of Proposition \ref{prop:2order} , there exists $C = C(\|u_{0}\|_{H^{s}}, C_{1}', s, s_{2}, T)>0$ such that
\begin{equation}
\| X(t_{1}) - X(t_{2}) \|_{H^{s_{2}}} \leq C | t_{1} - t_{2} | \label{eq:X}
\end{equation}
for all $t_{1}$, $t_{2} \in [0, t_{n}]$.
\end{lem}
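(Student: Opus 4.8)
The plan is to exploit the bilinear structure of $X$ together with the commutator gain supplied by \eqref{K2}, and to reduce everything to the Lipschitz continuity in $t$ of the diagonal trace $t\mapsto v(t,t)$. First I would record that $X$ is a symmetric quadratic expression in $v$: with $B(f,g) := -\tfrac12\{K(fg)_x - (fKg)_x - (gKf)_x\}$ one has $X = B(v,v)$, and $B$ is bilinear and symmetric because $K$, $\partial_x$ and pointwise multiplication are. On the Fourier side $\ha{B(f,g)}(\x) = -\tfrac{i}{2}\int m(\x,\x_1)\ha f(\x-\x_1)\ha g(\x_1)\,d\x_1$ with $m(\x,\x_1) = \x k(\x) - \x k(\x_1) - \x k(\x-\x_1)$. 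Writing $m(\x,\x_1) = \{\x k(\x) - \x_1 k(\x_1) - (\x-\x_1)k(\x-\x_1)\} - (\x-\x_1)k(\x_1) - \x_1 k(\x-\x_1)$ and applying \eqref{K2} to the first bracket and \eqref{K1} to the remainder gives $|m(\x,\x_1)| \lec |\x_1|\langle\x-\x_1\rangle^p + |\x-\x_1|\langle\x_1\rangle^p$, exactly the bound already used in \eqref{eq:L4+}. Distributing $\langle\x\rangle^{s_2} \lec \langle\x-\x_1\rangle^{s_2} + \langle\x_1\rangle^{s_2}$ and applying Young's inequality (the surviving $|\cdot|$ or $\langle\cdot\rangle^p$ factor being placed on the $L^1$ factor, which costs only an $H^{3/2+}$-norm since $s_2>3/2$) then yields the bilinear estimate $\|B(f,g)\|_{H^{s_2}} \lec \|f\|_{H^{s_2+\max\{1,p\}}}\|g\|_{H^{s_2+\max\{1,p\}}}$.

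The reduction is then immediate: by symmetry and bilinearity $X(t_1) - X(t_2) = B\big(v(t_1,t_1) - v(t_2,t_2),\,v(t_1,t_1) + v(t_2,t_2)\big)$, so
\begin{equation}
\|X(t_1) - X(t_2)\|_{H^{s_2}} \lec \|v(t_1,t_1) - v(t_2,t_2)\|_{H^{s_2+\max\{1,p\}}}\big(\|v(t_1,t_1)\|_{H^{s_2+\max\{1,p\}}} + \|v(t_2,t_2)\|_{H^{s_2+\max\{1,p\}}}\big), \nonumber
\end{equation}
and since $s_2 + \max\{1,p\} = s - 2\max\{1,p\} \le s$, the last two factors are at most $\sup_{\Pi_S^{(n)}}\|v\|_{H^s} \le C_1'$ by \eqref{eq:LHs}. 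Hence it suffices to prove $\|v(t_1,t_1) - v(t_2,t_2)\|_{H^\mu} \le C|t_1-t_2|$ for $\mu := s - 2\max\{1,p\}$, i.e.\ that the diagonal trace is Lipschitz into $H^\mu$.

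For this I would use that on each half-step $v(t,t)$ is an alternating composition of the inviscid Burgers flow $U_\theta$ and the linear flow $e^{\theta K}$ — e.g.\ $v(t,t) = e^{(t-t_{l-1})K}U_{t-t_{l-1}}\big(v(t_{l-1},t_{l-1})\big)$ on $\Om_{2,1}^{(l)}$ and $v(t,t) = U_{t-t_{l-1/2}}e^{(t-t_{l-1/2})K}\big(v(t_{l-1/2},t_{l-1/2})\big)$ on $\Om_{1,2}^{(l)}$ — all the $H^s$-norms arising being $\le C_1'$ by \eqref{eq:LHs}. Three facts then combine. (i) $\theta\mapsto e^{\theta K}w$ is Lipschitz into $H^\mu$ for $w\in H^{\mu+p}$, from $\partial_\theta e^{\theta K}w = Ke^{\theta K}w$, $\|e^{\theta K}\|_{H^\mu\to H^\mu}\le1$ (since $\Re k\le0$), and $|k|\lec\langle\cdot\rangle^p$. (ii) $\theta\mapsto U_\theta(w)$ is Lipschitz into $H^\mu$ for $w\in H^{\mu+1}$, from $\partial_\theta U_\theta(w) = -U_\theta(w)\partial_xU_\theta(w)$ and the algebra property of $H^\mu$ ($\mu>1/2$). (iii) $U_\theta$ is Lipschitz in the initial data in $H^\mu$: if $v_1,v_2$ solve \eqref{QL} on $[0,\Delta t/2]$ with $\|v_j\|_{H^s}\le C_1'$, then $d:=v_1-v_2$ solves $d_t = -\big(\tfrac12(v_1+v_2)d\big)_x$, so Proposition \ref{prop:ibp}(A) gives $\tfrac{d}{d\theta}\|d\|_{H^\mu}^2 \lec \|d\|_{H^\mu}^2\|v_1+v_2\|_{H^{\mu+1}} \lec \|d\|_{H^\mu}^2$, hence $\|U_\theta(v_1(0)) - U_\theta(v_2(0))\|_{H^\mu}\lec\|v_1(0)-v_2(0)\|_{H^\mu}$ uniformly for $\theta\in[0,\Delta t/2]$. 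All the exponent constraints $\mu+1\le s$, $\mu+p\le s$, $\mu>3/2$ hold because $s>3/2+3\max\{1,p\}$. A triangle-inequality decomposition of $v(t_2,t_2) - v(t_1,t_1)$ over the half-steps between $t_1$ and $t_2$, using the continuity of $v$ on $\Pi_S^{(n)}$ to glue the endpoints, then completes the argument.

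The hard part is this last step: organizing the half-step composition structure of $v(t,t)$ and proving the Lipschitz-in-data bound (iii) for the inviscid Burgers flow. Everything else — the multiplier estimate from \eqref{K2}, the bilinear bound, the reduction by bilinearity, and the time-Lipschitz facts (i)--(ii) — is routine given \eqref{K1}, \eqref{K2}, \eqref{eq:LHs} and Proposition \ref{prop:ibp}.
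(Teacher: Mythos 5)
Your argument is correct, but it is organized quite differently from the paper's. The paper proves \eqref{eq:X} by bounding the partial derivatives $X_t$ and $X_\tau$ in $H^{s_2}$: it differentiates the explicit expression for $X$, substitutes the governing equations (e.g.\ $\partial_\tau v = Kv$ on $\La_1^{(n)}$, which turns $X_\tau$ into terms such as $(K^2v)v_x$ and $K(v\,Kv_x)$), and estimates each term by Sobolev product bounds together with Lemma \ref{lem:vt-vta}, using the exponent budgets $s_2+2+p\le s$ and $s_2+1+2p\le s$; the Lipschitz bound then follows by integrating along the connected domain $\Pi_S^{(n)}$. You instead exploit the algebraic structure: writing $X=B(v,v)$ with $B$ symmetric bilinear, polarizing $X(t_1)-X(t_2)=B(v(t_1,t_1)-v(t_2,t_2),\,v(t_1,t_1)+v(t_2,t_2))$, proving the bilinear bound $\|B(f,g)\|_{H^{s_2}}\lec\|f\|_{H^{s_2+\max\{1,p\}}}\|g\|_{H^{s_2+\max\{1,p\}}}$ from \eqref{K1}--\eqref{K2} exactly as in \eqref{eq:L4+}, and reducing to Lipschitz continuity of the diagonal trace $t\mapsto v(t,t)$ in $H^{s-2\max\{1,p\}}$, which you obtain from the half-step flow composition together with a Lipschitz-in-data estimate for the inviscid Burgers flow (your step (iii), which is not in the paper but follows correctly from Proposition \ref{prop:ibp}(A) and Gronwall). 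The trade-off: the paper's route is shorter because Lemma \ref{lem:vt-vta} and Remark \ref{rem:FXG} are already available, and after differentiating it no longer needs the cancellation in \eqref{K2}; your route retains that cancellation in the difference and consequently spends only $2\max\{1,p\}$ derivatives rather than the paper's $2+p$ (resp.\ $1+2p$), at the cost of the extra machinery of the flow-composition decomposition and the stability estimate for the Burgers solution map. Both fit comfortably within the hypothesis $s_2=s-3\max\{1,p\}>3/2$, so the conclusions coincide.
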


\begin{rem}\label{rem:FXG}
Since $F=v_{t} + v v_{x}$, $G=v_{\ta} - Kv$, and $v$ satisfies (\ref{eq:SB}) and (\ref{eq:SL}), we have
\begin{align}
F_{\ta} - KF &= X \, , \, (t,\ta) \in \La_{1}^{(n)}, \nonumber \\
\{ G_{t} + (vG)_{x} \} &= -X  \, , \, (t,\ta) \in \La_{2}^{(n)}. \nonumber
\end{align}
\end{rem}
Before proving Lemma \ref{lem:Lip}, we estimate $v_{t}$ and $v_{\ta}$.
\begin{lem}\label{lem:vt-vta}
Let $j = 1, \, 2, \, 3$ and $3/2 < \si \leq s - j \max \{ 1,p \}$ for each $j$. Under the same assumption of Proposition \ref{prop:2order}, there exists $C = C(\|u_{0}\|_{H^{s}}, C_{1}', s, s_{2}, T)>0$ such that
\begin{align}
\| (\partial_{t})^{j} v(t,\ta) \|_{H^{\si}} &\leq \prod_{m = 0}^{j} \sup_{(t,\ta) \in \Pi_{S}^{(n)}} \| v(t,\ta) \|_{H^{\si+m}}, \label{eq:v(t)j} \\
\| (\partial_{\ta})^{j} v(t,\ta) \|_{H^{\si}} &\leq C \sup_{(t,\ta) \in \Pi_{S}^{(n)} }\| v(t,\ta) \|_{H^{\si + j p}}. \label{eq:v(ta)j}
\end{align}
\end{lem}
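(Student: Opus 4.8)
The plan is to estimate the iterated time derivatives $(\p_t)^j v$ and $(\p_\ta)^j v$ directly from the two defining equations \eqref{eq:SB} and \eqref{eq:SL} by repeated differentiation. For the $t$-derivatives: on $\La_1^{(n)}$ the function $v$ solves $\p_t v = -vv_x$, so $\p_t v = -vv_x$, and differentiating once more in $t$ and using $\p_t v = -vv_x$ again gives $(\p_t)^2 v = -(\p_t v)\,v_x - v\,(\p_t v)_x = (vv_x)v_x + v(vv_x)_x$, and so on; each further $t$-differentiation replaces one $\p_t v$ by $-vv_x$ (which costs one spatial derivative and one extra factor of $v$) and also acts by the product rule. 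The net effect is that $(\p_t)^j v$ is a sum of terms each of which is a product of $j+1$ factors drawn from $v$ and its $x$-derivatives, with the total number of $x$-derivatives distributed among them equal to $j$. Since $H^\si$ with $\si>3/2$ is an algebra and $\|fg\|_{H^\si}\lec\|f\|_{H^\si}\|g\|_{H^\si}$, and since moving $m$ spatial derivatives onto one factor costs $\|v\|_{H^{\si+m}}$, every such term is bounded by $\prod_{m=0}^{j}\|v\|_{H^{\si+m}}$ (the worst allocation puts all $j$ derivatives on a single factor and leaves $j$ others in $H^\si$); absorbing the combinatorial constant — or, as the statement is phrased, simply bounding by the product with the implicit understanding that the constant is $1$ after taking sups, which works because each $\|v\|_{H^{\si+m}}\ge\|v\|_{H^\si}\ge 1$ is not quite automatic, so more carefully one writes a constant $C$ and the stated clean form follows by the algebra estimate with constant normalized — gives \eqref{eq:v(t)j}. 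One must check that $\si\le s-j\max\{1,p\}$ guarantees $\si+j\le s$ (true since $\max\{1,p\}\ge 1$), so all the Sobolev norms appearing are controlled by $\sup_{\Pi_S^{(n)}}\|v\|_{H^s}<\I$ from Lemma \ref{lem:LHs}.

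For the $\ta$-derivatives the argument is cleaner because \eqref{eq:SL} is linear: on $\La_2^{(n)}$, $\p_\ta v = Kv$, hence $(\p_\ta)^j v = K^j v$, and by \eqref{K1} the multiplier $k$ satisfies $|k(\x)|\lec\LR{\x}^p$, so $|k(\x)^j|\lec\LR{\x}^{jp}$ and therefore $\|K^j v\|_{H^\si}\lec\|v\|_{H^{\si+jp}}$ by Plancherel. This is exactly \eqref{eq:v(ta)j}, with $C=C(p,j)$; the hypothesis $\si\le s-j\max\{1,p\}$ again ensures $\si+jp\le s$ so the right-hand side is finite. I would present the $\ta$-derivative case first since it is a one-line Fourier computation, then do the $t$-derivative case.

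The main obstacle — really the only place where care is needed — is the bookkeeping in the $t$-derivative case: one must set up a clean inductive description of $(\p_t)^j v$ as a finite linear combination of monomials $\prod_{i}\p_x^{a_i}v$ with $\sum_i a_i = j$ and exactly $j+1$ factors, and then verify that the $H^\si$ algebra estimate allocates derivatives so that the bound is the stated product. A convenient way to avoid writing the explicit combinatorics is induction on $j$: assume $\|(\p_t)^{j-1}v\|_{H^{\si'}}\lec\prod_{m=0}^{j-1}\|v\|_{H^{\si'+m}}$ for all admissible $\si'$, then write $(\p_t)^j v = \p_t\big((\p_t)^{j-1}v\big)$ and, since $(\p_t)^{j-1}v$ is a polynomial in $v$ and its $x$-derivatives, apply $\p_t v=-vv_x$ and the product/chain rule together with the algebra property and the inductive hypothesis at the shifted index $\si'=\si+1$. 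Since $j\le 3$ throughout this paper one could alternatively just verify $j=1,2,3$ by hand, which is what I expect the author does.

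Finally, I would note explicitly that these estimates are applied on each piece $\Om_{1,1}^{(l)},\Om_{1,2}^{(l)}$ (where $\p_t v=-vv_x$ holds) and $\Om_{2,1}^{(l)},\Om_{2,2}^{(l)}$ (where $\p_\ta v=Kv$ holds) respectively, so that the suprema on the right-hand sides are over all of $\Pi_S^{(n)}$ as stated, and that by Lemma \ref{lem:LHs} (applied with the constant $C_1'$ from \eqref{eq:LHs}) these suprema are finite, which is what makes \eqref{eq:v(t)j}–\eqref{eq:v(ta)j} useful in the sequel for estimating $F$, $G$, and $X$.
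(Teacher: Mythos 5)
There is a genuine gap, and it sits exactly at the one point where this lemma has real content. You assume that $\p_t v=-vv_x$ holds throughout $\La_1^{(n)}$ and that $\p_\ta v=Kv$ holds throughout $\La_2^{(n)}$, and your closing paragraph confirms that you intend to prove \eqref{eq:v(t)j} only on the pieces where the Burgers equation holds and \eqref{eq:v(ta)j} only on the pieces where the linear equation holds. But $\La_1^{(n)}=\bigcup_l(\Om_{1,1}^{(l)}\cup\Om_{2,1}^{(l)})$ contains the squares $\Om_{2,1}^{(l)}\subset\Sigma_{S,2}^{(n)}$, on which $v$ evolves by the \emph{linear} equation \eqref{eq:SL} and $\p_t v$ is not given by $-vv_x$; symmetrically, $\La_2^{(n)}$ contains the squares $\Om_{1,2}^{(l)}\subset\Sigma_{S,1}^{(n)}$, on which $v$ evolves by the \emph{Burgers} equation and $(\p_\ta)^j v\ne K^j v$. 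The lemma is stated, and later used (for $X_t$ and $X_\ta$ in Lemma \ref{lem:Lip}, and for $\Phi$ and $\Psi$ in Lemma \ref{lem:FG}, which involve mixed $t,\ta$ derivatives of $F$ and $G$), on all of $\Pi_S^{(n)}$, so these regions cannot be skipped.

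The missing idea is a propagation argument in the transverse variable. For $(\p_t)^j v$ on $\Om_{2,1}^{(l)}$ one differentiates \eqref{eq:SL} $j$ times in $t$ to get $\p_\ta (\p_t)^j v = K(\p_t)^j v$, so by \eqref{K1} the norm $\|(\p_t)^j v(t,\ta)\|_{H^\si}$ is nonincreasing in $\ta$ and is controlled by its value on the edge $\ta=t_{l-1}$, where the Burgers equation does give the polynomial expression you describe. For $(\p_\ta)^j v$ on $\Om_{1,2}^{(l)}$ one differentiates \eqref{eq:SB} in $\ta$ to get $\p_t v_\ta + (v v_\ta)_x=0$, applies Proposition \ref{prop:ibp} (A) and the Gronwall inequality to obtain $\|v_\ta(t,\ta)\|_{H^\si}\le e^{CC_1'\Delta t}\|v_\ta(t_{l-1/2},\ta)\|_{H^\si}$, and only then uses $v_\ta=Kv$ on the edge $t=t_{l-1/2}$; this Gronwall factor is where the constant $C$ in \eqref{eq:v(ta)j} comes from. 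The ingredients you do supply (the description of $(\p_t)^j v$ as a polynomial in $v$ and its $x$-derivatives, the $H^\si$ algebra estimate with the derivative-allocation bookkeeping, and $\|K^j v\|_{H^\si}\lec\|v\|_{H^{\si+jp}}$ from $|k(\x)|\lec\LR{\x}^p$) agree with the paper, but without the two propagation steps the proof does not establish the stated inequalities on all of $\Pi_S^{(n)}$.
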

\begin{proof}
First, we prove \eqref{eq:v(t)j} for the case $(t,\ta) \in \La_{1}^{(n)}$ with $j = 1$. Since \eqref{K1} and \eqref{eq:SL}, it follows that for $(t,\ta) \in \La_{1}^{(n)} \cap \Sigma_{S,2}^{(n)}$ and $3/2 < \si \leq s$,
\begin{equation}
\partial_{\ta} \| v_{t}(t,\ta) \|_{H^{\si}}^{2} = 2 \langle v_{t}, K v_{t} \rangle_{H^{\si}} \leq 0.
\end{equation}
By \eqref{eq:SB}, it follows that for $(t,\ta) \in \La_{1}^{(n)}$, $l \in \N$ such that $t \in [t_{l-1},t_{l-1/2}]$, and $3/2 < \si \leq s-1$,
\begin{align}
\| v_{t}(t,\ta) \|_{H^{\si}} \leq \| v_{t}(t,t_{l-1}) \|_{H^{\si}} \leq \| v(t,t_{l-1}) \|_{H^{\si}} \| v(t,t_{l-1}) \|_{H^{\si+1}} 
\leq \prod_{j = 0}^{1} \sup_{(t,\ta) \in \Pi_{S}^{(n)}} \| v(t,\ta) \|_{H^{\si+j}}. \nonumber
\end{align}
Next, we prove \eqref{eq:v(t)j} for the case $(t,\ta) \in \La_{1}^{(n)}$ with $j = 2, \, 3$. By induction argument, we have that $\partial_{t} (v v_{x})$ becomes the ($p+1$)-th polynomial with $p$ derivatives. By \eqref{K1}, we have $\partial_{\ta} \| (\partial_{t})^{j}v (t,\ta) \|_{H^{\si}}^{2} \leq 0$. Therefore, the same argument as the case $j = 1$ works and we have \eqref{eq:v(t)j}. 

Next, we prove \eqref{eq:v(t)j} for the case $(t,\ta) \in \La_{2}^{(n)}$ with $j = 1$.
By \eqref{eq:SL}, it follows that for $(t,\ta) \in \La_{2}^{(n)}$ and $3/2 < \si \leq s - 1$, 
\begin{align}
\| v_{t}(t,\ta) \|_{H^{\si}} \leq \| v(t,\ta) \|_{H^{\si}} \| v(t,\ta) \|_{H^{\si+1}} \leq \prod_{j = 0}^{1} \sup_{(t,\ta) \in \Pi_{S}^{(n)}} \| v(t,\ta) \|_{H^{\si+j}}. \nonumber
\end{align}
Then, we have \eqref{eq:v(t)j}. \eqref{eq:v(t)j} for the case $(t,\ta) \in \La_{2}^{(n)}$ with $j = 2, \, 3$ is easily proved since $\partial_{t} (v v_{x})$ becomes the ($p+1$)-th polynomial with $p$ derivatives.

Next, we prove \eqref{eq:v(ta)j} for the case $(t,\ta) \in \La_{2}^{(n)}$  with $j = 1$. By \eqref{eq:SB} and Proposition \ref{prop:ibp} (A), it follows that for $(t,\ta) \in \La_{2}^{(n)} \cap \Sigma_{S,1}^{(n)}$ and $3/2 < \si \leq s-1$,
\begin{equation}
\partial_{t} \| v_{\ta} (t,\ta) \|_{H^{\si}}^{2} = 2 \langle v_{\ta}, (v v_{x})_{\ta} \rangle_{H^{\si}} \leq C \| v_{\ta} \|_{H^{\si}}^{2} \| v \|_{H^{\si+1}}. \nonumber
\end{equation}
By the Gronwall inequality, \eqref{K1}, and \eqref{eq:SL}, it follows that for $(t,\ta) \in \La_{2}^{(n)}$, $l \in \N$ such that $\ta \in [t_{l-1/2},t_{l}]$, and $3/2 < \si \leq s- \max \{ 1,p \}$,
\begin{align}
\| v_{\ta}(t,\ta) \|_{H^{\si}} &\leq e^{C C_{1}' (t-t_{l - 1/2})} \| v_{\ta} (t_{l - 1/2},\ta) \|_{H^{\si}} \nonumber \\
&\leq e^{C C_{1}' \Delta t} \| v (t_{l - 1/2},\ta) \|_{H^{\si + p}} \leq e^{C C_{1}' \Delta t} \sup_{(t,\ta) \in \Pi_{S}^{(n)} }\| v(t,\ta) \|_{H^{\si + p}}. \nonumber
\end{align}
Next, we prove \eqref{eq:v(ta)j} for the case $(t,\ta) \in \La_{2}^{(n)}$  with $j = 2, \, 3$. In the same manner as the case $j = 1$, by the Gronwall inequality, we have $\| (\partial_{\ta})^{j} v (t,\ta) \|_{H^{\si}} \leq e^{C C_{1}' (t-t_{l - 1/2})} \| (\partial_{\ta})^{j} v (t_{l - 1/2},\ta) \|_{H^{\si}}$. Thus, by \eqref{eq:SL}, we have \eqref{eq:v(ta)j}. Finally, we prove \eqref{eq:v(ta)j} for the case $(t,\ta) \in \La_{1}^{(n)}$ with $j = 1, \, 2, \, 3$. By \eqref{K1} and \eqref{eq:SL}, it follows that for $(t,\ta) \in \La_{1}^{(n)}$ and $3/2 < \si \leq s-j$,
\begin{align}
\| (\partial_{\ta})^{j} v (t,\ta) \|_{H^{\si}} \leq \| v (t,\ta) \|_{H^{\si + j p}} \leq  \sup_{(t,\ta) \in \Pi_{S}^{(n)} }\| v(t,\ta) \|_{H^{\si + j p}}. \nonumber
\end{align}
Therefore, we have \eqref{eq:v(ta)j}. 
\end{proof}
Next, we prove Lemma \ref{lem:Lip}.
\begin{proof} 
For the proof of Lemma \ref{lem:Lip}, we only need to prove the boundness of $X_{t}$ and $X_{\ta}$ in $H^{s_{2}}$. We have the boundness for $(t,\ta) \in \La_{2}^{(n)}$ in the same manner as for $(t,\ta) \in \La_{1}^{(n)}$, so we only give the proof for the case $(t,\ta) \in \La_{1}^{(n)}$. First, we prove the boundness of $X_{t}$ in $H^{s_{2}}$. By the definition of $X$, we have
\begin{align}
X_{t} = (Kv_{t}) v_{x} + (Kv_{x}) v_{t} + v_{x t} (Kv) + (Kv_{xt}) v - K (v_{xt} v) - K (v_{t} v_{x}). \nonumber
\end{align}
Since $s_{2} + 2 + p \leq s$, we have
\begin{align}
\| X_{t} \|_{H^{s_{2}}} &\leq C \| v \|_{H^{s_{2} + 2 + p}} \| v \|_{H^{s_{2} + 1 + p}} \leq C.  \label{eq:Xt}
\end{align}
Next, we prove for the boundness of $X_{\ta}$ in $H^{s_{2}}$. Since $s_{2} + 1 + 2p \leq s$, \eqref{eq:SL}, and Remark \ref{rem:FXG}, in the same manner as \eqref{eq:Xt}, we have 
\begin{align}
\| X_{\ta} \|_{H^{s_{2}}} &\leq \|(K^{2} v) v_{x} \|_{H^{s_{2}}} + 2 \|(Kv) (Kv_{x}) \|_{H^{s_{2}}} + \| v (K^{2} v_{x}) \|_{H^{s_{2}}} \nonumber \\
&\quad + \| K(v (Kv_{x})) \|_{H^{s_{2}}} + \| K((Kv) v_{x}) \|_{H^{s_{2}}} \nonumber \\
&\leq C. \nonumber
\end{align} 
\end{proof}
Next, we estimate $H + \ti{H} = F + G + \ti{F} + \ti{G}$. In view of $\ti{F} = G = 0$ in $\La_{1}^{(n)}$, $F = \ti{G} = 0$ in $\La_{2}^{(n-1)}$, and Remark \ref{rem:FXG}, it is natural to estimate $\ti{F} + G$ and $F + \ti{G}$. 
\begin{lem}\label{lem:FG}
Let $l \in \N$, $F$ satisfy \eqref{eq:ftf}, and $G$ satisfy \eqref{eq:ftg}. Under the same assumptions of Proposition \ref{prop:2order}, there exists $C = C (\|u_{0}\|_{H^{s}}, C_{1}', s, s_{2}, T)>0$ such that
\begin{equation}
\Big{\|} F(t) + G (t + \frac{\Delta t}{2}) \Big{\|}_{H^{s_{2}}} \leq C (\Delta t)^{2} \nonumber
\end{equation}
for all $t \in [t_{l-1},t_{l- 1/2}] \subset [0, t_{n-1/2}]$, and
\begin{equation}
\Big{\|} F(t) + G (t - \frac{\Delta t}{2}) \Big{\|}_{H^{s_{2}}} \leq C (\Delta t)^{2} \nonumber
\end{equation}
for all $t \in [t_{l-1},t_{l- 1/2}] \subset [t_{1}, t_{n-1/2}]$.
\end{lem}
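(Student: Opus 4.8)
The plan is to integrate the two forcing equations along the diagonal, extract their leading‑order parts, and observe that these parts cancel, each reducing to an integral of the auxiliary term $X$ over an interval of length $\le\Delta t/2$. Fix $l$ with $[t_{l-1},t_{l-1/2}]\subset[0,t_{n-1/2}]$ and let $t\in[t_{l-1},t_{l-1/2}]$, $t'=t+\Delta t/2\in[t_{l-1/2},t_{l}]$. First I would locate the relevant diagonal points: $(t,t)\in\Om_{2,1}^{(l)}\subset\La_{1}^{(n)}\cap\Sigma_{S,2}^{(n)}$, so there $F=v_{t}+vv_{x}$, the equation $\p_{\ta}v=Kv$ holds, and hence, by Remark \ref{rem:FXG}, $\p_{\ta}F-KF=X$; moreover $F$ vanishes on $\{\ta=t_{l-1}\}$ since $(t,t_{l-1})\in\Om_{1,1}^{(l)}$, where the Burgers equation holds. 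Similarly $(t',t')\in\Om_{1,2}^{(l)}\subset\La_{2}^{(n)}\cap\Sigma_{S,1}^{(n)}$, so there $G=v_{\ta}-Kv$, $\p_{t}v+vv_{x}=0$ holds, and Remark \ref{rem:FXG} gives $\p_{t}G+(vG)_{x}=-X$, with $G$ vanishing on $\{t=t_{l-1/2}\}$ since $(t_{l-1/2},\ta)\in\Om_{2,2}^{(l)}$. Integrating these identities in the respective evolution variables along the diagonal,
\begin{align*}
F(t,t) &= \int_{t_{l-1}}^{t}\bigl(KF(t,\si)+X(t,\si)\bigr)\,d\si,\\
G(t',t') &= -\int_{t_{l-1/2}}^{t'}\bigl((vG)_{x}(\si,t')+X(\si,t')\bigr)\,d\si,
\end{align*}
where crucially the two integration intervals have the same length $a:=t-t_{l-1}=t'-t_{l-1/2}\le\Delta t/2$.

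Next I would discard the terms that are already $O((\Delta t)^{2})$. An energy estimate for $F$ on $\Om_{2,1}^{(l)}$, using $\Re\langle F,KF\rangle_{H^{s_{2}+p}}\le0$ from \eqref{K1}, $F|_{\ta=t_{l-1}}=0$, and the boundedness of $\|X\|_{H^{s_{2}+p}}$ (this is where the loss $s_{2}+1+2p\le s$, i.e.\ $s_{2}=s-3\max\{1,p\}$, enters, together with the bound on $\|v\|_{H^{s}}$ from Lemma \ref{lem:LHs}), gives $\|F(t,\si)\|_{H^{s_{2}+p}}\le C\Delta t$ and hence $\bigl\|\int_{t_{l-1}}^{t}KF(t,\si)\,d\si\bigr\|_{H^{s_{2}}}\le C\int_{t_{l-1}}^{t}\|F(t,\si)\|_{H^{s_{2}+p}}\,d\si\le Ca\Delta t$. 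Likewise, an energy estimate for $G$ on $\Om_{1,2}^{(l)}$ using Proposition \ref{prop:ibp}(A), the Gronwall inequality, $G|_{t=t_{l-1/2}}=0$ and $\|X\|_{H^{s_{2}+1}}\le C$ gives $\|G(\si,t')\|_{H^{s_{2}+1}}\le C\Delta t$ and hence $\bigl\|\int_{t_{l-1/2}}^{t'}(vG)_{x}(\si,t')\,d\si\bigr\|_{H^{s_{2}}}\le Ca\Delta t$. Since $a\le\Delta t/2$, both are $O((\Delta t)^{2})$.

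It then remains to estimate $\int_{t_{l-1}}^{t}X(t,\si)\,d\si-\int_{t_{l-1/2}}^{t'}X(\si,t')\,d\si$. Substituting $\si=t_{l-1}+r$ in the first integral and $\si=t_{l-1/2}+r$ in the second — both ranging over $r\in[0,a]$ by the equality of lengths above — this equals $\int_{0}^{a}\bigl(X(t,t_{l-1}+r)-X(t_{l-1/2}+r,t')\bigr)\,dr$. The two arguments of $X$ both lie in $[t_{l-1},t_{l}]^{2}\cap\Pi_{S}^{(n)}$, at distance $O(\Delta t)$ from each other, and this set is connected by paths of length $O(\Delta t)$ along which $X$ is continuous (as $v$ is) and piecewise $C^{1}$ with $\|X_{t}\|_{H^{s_{2}}},\|X_{\ta}\|_{H^{s_{2}}}\le C$ — precisely the bounds established in the proof of Lemma \ref{lem:Lip}. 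Hence $\|X(t,t_{l-1}+r)-X(t_{l-1/2}+r,t')\|_{H^{s_{2}}}\le C\Delta t$ uniformly in $r\in[0,a]$, and integrating over $r$ with $a\le\Delta t/2$ yields the claimed bound $C(\Delta t)^{2}$. The second inequality of the lemma follows in exactly the same way, now taking $t'=t-\Delta t/2\in[t_{l-3/2},t_{l-1}]$, so that $(t',t')\in\Om_{1,2}^{(l-1)}$ and $G$ vanishes at $t=t_{l-3/2}$ because of $\Om_{2,2}^{(l-1)}$ — which forces $l\ge2$, i.e.\ the restriction $t\ge t_{1}$ in the hypothesis.

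The main obstacle is the bookkeeping: correctly matching each diagonal segment to its $\Om$-region and to the corresponding forcing identity from Remark \ref{rem:FXG}, and, above all, arranging the change of variables so that the two $X$-integrals are over intervals of equal length $a$ — only then do the zeroth-order parts cancel exactly, leaving the Lipschitz-in-time defect of $X$, which supplies the extra power of $\Delta t$. The derivative counting ($X$ carries one spatial derivative plus the $p$-fold loss from $K$, and is differentiated once more in the Lipschitz estimate) is what dictates the $3\max\{1,p\}$ in $s_{2}$, and should be checked case by case against $p\ge1$ and $p<1$.
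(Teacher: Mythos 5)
Your argument is correct, but it is not the paper's argument. The paper Taylor-expands the diagonal map $t \mapsto F(t,t)+G(t\pm\frac{\Delta t}{2},t\pm\frac{\Delta t}{2})$ to second order at $t=t_{l-1}$: the zeroth-order terms vanish ($F=0$ on $\Sigma_{S,1}^{(n)}$, $G=0$ on $\Sigma_{S,2}^{(n)}$), the surviving first-order contribution is exactly $X(t_{l-1})-X(t_{l-1\pm 1/2})=O(\Delta t)$ by Lemma \ref{lem:Lip}, and the integral remainder is controlled by $\|\Phi\|_{H^{s_2}}+\|\Psi\|_{H^{s_2}}\le C$ with $\Phi=F_{tt}+2F_{t\ta}+F_{\ta\ta}$, $\Psi=G_{tt}+2G_{t\ta}+G_{\ta\ta}$ — this is where the $j=2,3$ cases of Lemma \ref{lem:vt-vta} are consumed. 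You instead integrate the transport identities of Remark \ref{rem:FXG} from the sets where $F$ and $G$ vanish, kill the $KF$ and $(vG)_x$ contributions by energy estimates showing $F,G=O(\Delta t)$ (in the spirit of Lemma \ref{lem:bGWL2} from the Godunov section), and let the two $X$-integrals over intervals of equal length cancel up to the Lipschitz defect of $X$. The trade-off: you avoid the second derivatives of $F,G$ (hence the third-order bounds on $v$) at the price of two auxiliary energy estimates, and you need Lemma \ref{lem:Lip} for off-diagonal pairs of points rather than only on the diagonal — as you note, this is not what the lemma states but is exactly what its proof (boundedness of $X_t$ and $X_\ta$ on $\Pi_S^{(n)}$, plus continuity of $X$ across the corners of the $\Om$-regions) delivers. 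Your regularity bookkeeping ($s_2+p+\max\{1,p\}\le s$, $s_2+1+\max\{1,p\}\le s$, $s_2+2+p\le s$, $s_2+1+2p\le s$) checks out for all $p\ge 0$, so both routes close under the hypothesis $s_2=s-3\max\{1,p\}>3/2$; your route arguably uses slightly less of that room.
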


\begin{proof}
Let $\Phi = F_{tt} + 2 F_{t \ta} + F_{\ta \ta}$ and $\Psi = G_{tt} + 2 G_{t \ta} + G_{\ta \ta}$. By Taylor expansion at $t = t_{l-1}$, we have
\begin{align}
&F(t) + G(t \pm \frac{\Delta t}{2}) \nonumber \\
&\quad = \{ F(t_{l-1}) + G(t_{l -1 \pm \frac{1}{2}}) \} + (t - t_{l-1}) \{ (F_{t} + F_{\ta}) (t_{l-1}) + (G_{t} + G_{\ta}) (t_{l -1 \pm \frac{1}{2}})\} \nonumber \\
&\qquad + \frac{(t - t_{l-1})^{2}}{2} \int_{0}^{1} \{ \Phi(\theta(t - t_{l-1}) + t_{l-1}) + \Psi (\theta(t \pm \frac{\Delta t}{2} - t_{l-1\pm \frac{1}{2}}) + t_{l-1\pm \frac{1}{2}}) \} d \theta. \nonumber
\end{align}
Since $F = 0$ in $\La_{2}^{(n)} \cup \Sigma_{S,1}^{(n)}$ because of \eqref{eq:SB} and \eqref{eq:ftf}, and $G = 0$ in $\La_{1}^{(n)} \cup \Sigma_{S,2}^{(n)}$ because of \eqref{eq:SL} and \eqref{eq:ftg}, we have $F(t_{l-1}) = G(t_{l-1 \pm 1/2}) = 0$ and $F_{t}(t_{l-1}) = G_{\ta}(t_{l-1 \pm 1/2}) = 0$. By Remark \ref{rem:FXG} and $F(t_{l-1}) = G(t_{l-1 \pm 1/2}) = 0$, we have $F_{\ta}(t_{l-1}) + G_{t}(t_{l-1 \pm \frac{1}{2}}) = X(t_{l-1}) - X(t_{l -1  \pm \frac{1}{2}})$. Then, we have the second order estimate in $\Delta t$ of the second term in $H^{s_{2}}$ space by Lemma \ref{lem:Lip}. $\| \Phi \|_{H^{s_{2}}} + \| \Psi \|_{H^{s_{2}}} \leq C$ is proved by \eqref{eq:SB}--\eqref{IVS}, Proposition \ref{prop:ibp}, \eqref{eq:LHs}, (\ref{eq:v(t)j}) , and (\ref{eq:v(ta)j}). Therefore it follows that for $t \in [t_{l-1},t_{l- 1/2}]$,
\begin{align}
\| F(t) + G(t \pm \frac{\Delta t}{2}) \|_{H^{s_{2}}} &\leq |t - t_{l-1}| \| X(t_{l-1 \pm \frac{1}{2}}) - X(t_{l -1}) \|_{H^{s_{2}}} + \frac{C}{2} |t - t_{l-1}|^{2} \nonumber \\
&\leq C (\Delta t)^{2}.
\end{align}
\end{proof}
\begin{lem}\label{lem:small}
Let $s_{2} = s - 3 \max \{1,p \} > 3/2$. Assume that $u \in C([0,t_{1/2}]: H^{s})$ satisfies $(\ref{eq:DBO})$ on $[0,t_{1/2}]$ and $v \in C(\La_{1}^{(1)} : H^{s})$ satisfies $(\ref{eq:SB})$--$(\ref{IVS})$ on $\La_{1}^{(1)}$. Assume that a constant $C_{1}' > 0$ satisfies $\sup_{(t,\tau) \in \La_{1}^{(1)}} \| v(t,\tau) \|_{H^{s}} \leq C_{1}'$. Then, there exists $C = C (\sup_{[0, t_{1/2}]} \|u(t)\|_{H^{s}}, C_{1}', s, s_{2})>0$ such that
\begin{equation}
\| w(t) \|_{H^{s_{2}}} \leq C (\Delta t)^{2} \nonumber
\end{equation}
for all $t \in [0, t_{1/2}]$.
\end{lem}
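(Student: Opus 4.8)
The plan is to mimic the structure of Lemmas \ref{lem:bGWL1} and \ref{lem:bGWL2} from Section 3, but restricted to the single interval $[0,t_{1/2}]$, which is the ``first half-step'' of the Strang scheme. On this interval $(t,\ta)\in\La_1^{(1)}$, so by \eqref{eq:ftf}--\eqref{eq:ftg} the total forcing term is $H(t)=F(t)=v_t(t,t)+v(t,t)v_x(t,t)$, and \eqref{eq:w'} reads $w' = -ww_x-(uw)_x+Kw+F(t)$ with $w(0)=0$. First I would run the energy estimate exactly as in \eqref{eq:L1}--\eqref{eq:L1-1}: differentiating $\|w(t)\|_{H^{s_2}}^2$, using $\langle w,Kw\rangle_{H^{s_2}}\le0$ from \eqref{K1}, Proposition \ref{prop:ibp}, the Sobolev inequality, and the a priori bounds $\|u\|_{H^s}\le C$, $\|v\|_{H^s}\le C_1'$ to get
\begin{equation}
\frac{d}{dt}\|w(t)\|_{H^{s_2}}\le C\big(\|w(t)\|_{H^{s_2}}+\|F(t)\|_{H^{s_2}}\big),\nonumber
\end{equation}
and then Gronwall with $w(0)=0$ gives $\|w(t)\|_{H^{s_2}}\le C\int_0^t\|F(t')\|_{H^{s_2}}\,dt'$ for $t\in[0,t_{1/2}]$.

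The remaining point is to show $\|F(t)\|_{H^{s_2}}\le C(\Delta t)^2$ on $[0,t_{1/2}]$; combined with the Gronwall bound and $t\le t_{1/2}\le\Delta t$ this would actually give $C(\Delta t)^2$ — wait, that would give $(\Delta t)^2$ only if $\|F\|\lesssim\Delta t$, so in fact I only need $\|F(t)\|_{H^{s_2}}\lesssim\Delta t$, which integrates to $\lesssim(\Delta t)^2$. To get $\|F(t)\|_{H^{s_2}}\lesssim\Delta t$ I would argue as in Lemma \ref{lem:bGWL2}: on $\La_1^{(1)}$ the forcing $F$ satisfies (by Remark \ref{rem:FXG}, or directly differentiating \eqref{eq:SB}, \eqref{eq:SL}) an equation whose $H^{s_2}$-energy has derivative bounded, using \eqref{K2}, the Sobolev inequality, and the high-norm bound $\sup\|v\|_{H^s}\le C_1'$ together with $s_2+3\max\{1,p\}\le s$; and crucially $F(t,t_{1/2})=0$ where $\ta=t_{1/2}$ is the initial $\ta$-slice for the linear sub-step, or more simply $F$ vanishes on the relevant corner of $\La_1^{(1)}$ (since $F=v_t+vv_x=0$ along $\{\ta=t_{n-1}\}$ by \eqref{eq:SB}). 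Hence along the flow $\|F(t)\|_{H^{s_2}}$ is the integral over a $\ta$-interval of length $\le\Delta t$ of a bounded quantity, so $\|F(t)\|_{H^{s_2}}\le C\Delta t$.

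Putting these together: $\|w(t)\|_{H^{s_2}}\le C\int_0^{t_{1/2}}\|F(t')\|_{H^{s_2}}\,dt'\le C\cdot t_{1/2}\cdot C\Delta t\le C(\Delta t)^2$, as claimed. I expect the main obstacle to be the bookkeeping on the domain $\La_1^{(1)}=\Om_{1,1}^{(1)}\cup\Om_{2,1}^{(1)}$: one must be careful about which variable ($t$ or $\ta$) is being evolved on which piece, verify that $F$ indeed vanishes on the appropriate initial slice so that the ``integrate a bounded derivative over an interval of length $\Delta t$'' argument applies, and check that the loss of derivatives in \eqref{eq:L4+}-type estimates for the source of the $F$-equation costs at most $3\max\{1,p\}$ so that the hypothesis $s_2=s-3\max\{1,p\}>3/2$ suffices (the factor $3$ being generous here since on a single half-step only one $\ta$-derivative and one $t$-derivative of $v$ enter, costing $\max\{1,p\}+1\le 3\max\{1,p\}$). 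None of these steps is deep; they are all local versions of estimates already established in Sections 2 and 3.
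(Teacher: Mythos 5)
Your argument is correct, but it takes a genuinely different route from the paper's. The paper proves Lemma \ref{lem:small} by a second-order Taylor expansion of $w$ at $t=0$: the zeroth-order term vanishes since $w(0)=v(0,0)-u(0)=0$, the first-order term $\partial_t v(0,0)+\partial_\ta v(0,0)-u'(0)=-v(0,0)v_x(0,0)+Kv(0,0)+u(0)u_x(0)-Ku(0)$ vanishes because the two split generators sum to the full one at the initial point, and the integral remainder involving $\partial_{tt}v$, $\partial_{t\ta}v$, $\partial_{\ta\ta}v=K^2v$ is bounded via Lemma \ref{lem:vt-vta} and \eqref{eq:SL}, yielding $\|w(t)\|_{H^{s_2}}\le Ct^2$. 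You instead rerun the Godunov machinery of Section 3 on the single half-interval: the Gronwall step of Lemma \ref{lem:bGWL1} gives $\|w(t)\|_{H^{s_2}}\le C\int_0^t\|F(t')\|_{H^{s_2}}\,dt'$, and the forcing-term argument of Lemma \ref{lem:bGWL2} (via Remark \ref{rem:FXG}, \eqref{K1}, and the \eqref{eq:L4+}-type bound on $X$) gives $\|F(t,t)\|_{H^{s_2}}\le Ct$, so integrating over $[0,t]$ with $t\le \Delta t/2$ produces the extra factor of $\Delta t$ and hence $C(\Delta t)^2$. Both proofs close comfortably under $s_2=s-3\max\{1,p\}>3/2$: your route costs roughly $s_2+\max\{1,p\}\le s$ derivatives (to bound $X$) plus $s_2+1\le s$ (for the Gronwall step), slightly less than the $s_2+2\max\{1,p\}\le s$ the paper needs for $K^2v$ in the remainder, and it has the merit of reusing Section 3 verbatim, whereas the paper's expansion exhibits the first-order cancellation of the splitting explicitly. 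One small correction to your write-up: the slice on which $F$ vanishes in $\La_1^{(1)}$ is $\{\ta=t_0\}=\{\ta=0\}$, where \eqref{eq:SB} holds, not $\{\ta=t_{1/2}\}$; your parenthetical self-correction (``$F=v_t+vv_x=0$ along $\{\ta=t_{n-1}\}$'') is the statement you actually need.
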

\begin{proof}
We use the Taylor expantion of $w(t)$ at $t = 0$. That is
\begin{align}
w(t) &= w(0) + t \big{(} \partial_{t} v(t,\ta) |_{t = \ta = 0} + \partial_{\ta} v(t,\ta) |_{t = \ta = 0} - u'(0) \big{)} \nonumber \\
&\quad + \frac{t^{2}}{2} \int_{0}^{1} \{ \partial_{tt} v(\si t,\si t) + 2 \partial_{t \ta} v(\si t,\si t) + \partial_{\ta \ta} v(\si t,\si t) \} d \si. \nonumber
\end{align}
The term of the 0th order of $t$ is $0$ because of $w(0)=v(0,0) - u(0)$ and $u(0) = v(0,0)$. Applying $(\ref{eq:DBO})$, $(\ref{eq:SB})$, $(\ref{eq:SL})$, and $u(0) = v(0,0)$ to the term of the 1st order of $t$, we have
\begin{align}
\partial_{t} v(t,\ta) |_{t = \ta = 0} &+ \partial_{\ta} v(t,\ta) |_{t = \ta = 0} - u'(0) \nonumber \\
&= -v(0,0) v_{x}(0,0) + K v(0,0) + u(0)u_{x}(0) - K u(0) = 0. \nonumber
\end{align}
By \eqref{eq:v(t)j} and \eqref{eq:SL}, it follows that
\begin{align}
&\| \partial_{tt} v(\si t,\si t) + 2 \partial_{t \ta} v(\si t,\si t) + \partial_{\ta \ta} v(\si t,\si t) \|_{H^{s_{2}}} \nonumber \\
&\quad \leq \| \partial_{tt} v(\si t,\si t) \|_{H^{s_{2}}} + 2 \| K \partial_{t} v(\si t,\si t) \|_{H^{s_{2}}} + \| K^{2} v(\si t,\si t) \|_{H^{s_{2}}} \nonumber \\
&\quad \leq \prod_{j = 0}^{2} \max_{(t,\ta) \in \La_{1}^{(1)}} \| v(t,\ta) \|_{H^{s_{2}+j}} + \prod_{j = 0}^{1} \max_{(t,\ta) \in \La_{1}^{(1)}} \| v(t,\ta) \|_{H^{s_{2} + p +j}} + \max_{(t,\ta) \in \La_{1}^{(1)}} \| v(t,\ta) \|_{H^{s_{2} + 2p}} \nonumber \\
&\quad \leq C
\end{align}
Therefore, $\| w(t) \|_{H^{s_{2}}} \leq C t^{2}$. Since $t \leq (\Delta t)/2$, we have the desired result. 
\end{proof}

Next, we prove (\ref{eq:ave}) by Lemmas \ref{lem:GWS}, \ref{lem:small}, Theorem \ref{thm:L}, and Remark \ref{rem:rev}.
\begin{proof}
First, we prove three inequalities
\begin{equation}
\| z(0) \|_{H^{s_{2}}} \leq C (\Delta t)^{2} , \, \| \ti{u} - u \|_{H^{s_{2} + 1}} \leq C \Delta t , \,  \| w \|_{H^{s_{2} + 1}} \leq C \Delta t. \label{eq:3ests}
\end{equation}
We have the first inequality in $(\ref{eq:3ests})$ by applying Lemma \ref{lem:small} for $t = (\Delta t)/2$. Note that $z(0) = w(0) + \ti{w}(0) = w(\frac{\Delta t}{2})$. To prove the second one, we use $(\ref{eq:DBO})$, $(\ref{bdd-u})$, and $\tilde{u}(t) - u(t) = \int_{0}^{\frac{\Delta t}{2}} u'(t + \si) d \si$. The third one is already proved as Theorem \ref{thm:L} and Remark \ref{rem:rev}.  

Finally, we prove \eqref{eq:ave}. For $(t,t) \in \La_{1}^{(n)}$, since $\ti{F} = G = 0$, we have $\| H + \ti{H} \|_{H^{s_{2}}} = \| F + \ti{G} \|_{H^{s_{2}}} \leq C (\Delta t)^{2}$ by Lemma \ref{lem:FG}. For $(t,t) \in \La_{2}^{(n-1)}$ (not $\La_{2}^{(n)}$), since $F = \ti{G} = 0$, we have $\| H + \ti{H} \|_{H^{s_{2}}} = \| \ti{F} + G \|_{H^{s_{2}}} \leq C (\Delta t)^{2}$ by Lemma \ref{lem:FG}. 
Thus, it follows that for $(t,t) \in \La_{1}^{(n)} \cup \La_{2}^{(n-1)}$, 
\begin{align}
\| H(t) + \ti{H}(t) \|_{H^{s_{2}}} \leq C (\Delta t)^{2}. \label{eq:HtiH}
\end{align}
Since $(t,t) \in \La_{1}^{(n)} \cup \La_{2}^{(n-1)}$ is equivalent to $t \in [0, t_{n-1/2}]$, \eqref{eq:ave} follows by applying \eqref{eq:3ests} and \eqref{eq:HtiH} to Lemma \ref{lem:GWS}.
\end{proof}

Next, we prove \eqref{eq:dif}.
We use the following lemma to prove \eqref{eq:dif} later.

\begin{lem}\label{lem:dif2}
Let $l \in \N$ and assumptions of Proposition \ref{prop:2order} hold.\\
$(A)$ Then there exists $C = C (C_{0}, C_{1}', s, s_{2}, T)>0$ such that for all $t \in [t_{l-1}, t_{l - 1/2}] \subset [0, t_{n}]$,
\begin{equation*}
\| w(t) - w(t_{l-1}) \|_{H^{s_{2}}} \leq C (\Delta t)^{2}.
\end{equation*}
$(B)$ Then there exists $C = C (C_{0}, C_{1}', s, s_{2}, T)>0$ such that for all $t \in [t_{l - 1/2}, t_{l}] \subset [0, t_{n}]$,
\begin{equation*}
\| w(t) - w(t_{l - 1/2}) \|_{H^{s_{2}}} \leq C (\Delta t)^{2}.
\end{equation*}
\end{lem}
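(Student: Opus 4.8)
The plan is to integrate the evolution equation \eqref{eq:w'}, $w'=-ww_x-(uw)_x+Kw+H(t)$, over each half-subinterval and to check that the whole right-hand side is of size $O(\Delta t)$ in $H^{s_2}$ there; since the half-subintervals have length $\Delta t/2$, integrating then yields the claimed $O((\Delta t)^2)$ bound. Concretely, for (A) I would write, for $t\in[t_{l-1},t_{l-1/2}]$,
\[
w(t)-w(t_{l-1})=\int_{t_{l-1}}^{t}\Bigl(-ww_x-(uw)_x+Kw+H(\si)\Bigr)\,d\si ,
\]
estimate $\|\,\cdot\,\|_{H^{s_2}}$ of the integrand uniformly in $\si$, and conclude $\|w(t)-w(t_{l-1})\|_{H^{s_2}}\le(\Delta t/2)\sup_\si\|w'(\si)\|_{H^{s_2}}$; for (B) the same argument applies with $t_{l-1}$ replaced by $t_{l-1/2}$.

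For the three ``$w$-terms'' the bound $O(\Delta t)$ is routine. By Theorem \ref{thm:L} and Remark \ref{rem:rev} (applied at regularity $s$, they are valid at any Sobolev level $\le s-\max\{1,p\}$, in particular at $s_2+\max\{1,p\}$, since $s_2+\max\{1,p\}=s-2\max\{1,p\}\le s-\max\{1,p\}$), one has $\|w\|_{H^{s_2+\max\{1,p\}}}\lec\Delta t$; for the level $H^{s_2+1}$ this is recorded as the third estimate in \eqref{eq:3ests}. Then, using that $H^{s_2}$ is a Banach algebra ($s_2>3/2$), \eqref{bdd-u}, $s_2+1\le s$, and $\|Kw\|_{H^{s_2}}\lec\|w\|_{H^{s_2+p}}$, one gets $\|ww_x\|_{H^{s_2}}\lec\|w\|_{H^{s_2}}\|w\|_{H^{s_2+1}}\lec(\Delta t)^2$, $\|(uw)_x\|_{H^{s_2}}\lec C_0\|w\|_{H^{s_2+1}}\lec\Delta t$, and $\|Kw\|_{H^{s_2}}\lec\|w\|_{H^{s_2+\max\{1,p\}}}\lec\Delta t$.

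The only nontrivial point is $\|H(\si)\|_{H^{s_2}}\lec\Delta t$ on the subinterval, and here the bookkeeping of the $\Om$-pieces enters. On the diagonal over $(t_{l-1},t_{l-1/2}]$ one has $(\si,\si)\in\Om_{2,1}^{(l)}\subset\La_1^{(n)}$, so $H(\si)=F(\si)$; and $(\si,t_{l-1})\in\Om_{1,1}^{(l)}\subset\Sigma_{S,1}^{(n)}$, so $F(\si,t_{l-1})=0$. By Remark \ref{rem:FXG}, $F_\ta-KF=X$ on $\La_1^{(n)}$, hence by \eqref{K1}, $\partial_\ta\|F(\si,\ta)\|_{H^{s_2}}\le\|X(\si,\ta)\|_{H^{s_2}}$; moreover $\|X\|_{H^{s_2}}\lec 1$ by the product estimates used in the proof of Lemma \ref{lem:Lip}, using \eqref{eq:LHs} and $s_2+1+p\le s$. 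Integrating in $\ta$ from $t_{l-1}$ gives $\|F(\si,\ta)\|_{H^{s_2}}\lec|\ta-t_{l-1}|$, so on the diagonal $\|F(\si)\|_{H^{s_2}}\lec|\si-t_{l-1}|\le\Delta t$. For (B), on the diagonal over $(t_{l-1/2},t_l]$ one has $(\si,\si)\in\Om_{1,2}^{(l)}\subset\La_2^{(n)}$, so $H(\si)=G(\si)$; and $G=0$ on $\{t_{l-1/2}\}\times(t_{l-1/2},t_l]\subset\Sigma_{S,2}^{(n)}$. From $G_t+(vG)_x=-X$ on $\La_2^{(n)}$ (Remark \ref{rem:FXG}), \eqref{K1}, Proposition \ref{prop:ibp}(A), \eqref{eq:LHs}, and $\|X\|_{H^{s_2}}\lec 1$, one obtains $\partial_t\|G(t,\ta)\|_{H^{s_2}}\lec\|G(t,\ta)\|_{H^{s_2}}+1$, so the Gronwall inequality with the zero value at $t=t_{l-1/2}$ gives $\|G(t,\ta)\|_{H^{s_2}}\lec|t-t_{l-1/2}|\le\Delta t$ on the subinterval. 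Combining with the previous paragraph, $\|w'(\si)\|_{H^{s_2}}\lec\Delta t$ throughout each half-subinterval, which gives (A) and (B).

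I expect the main obstacle to be exactly this control of the forcing term: correctly identifying which $\Om^{(l)}$-piece the diagonal runs through on each half-step, so that $F$ (resp. $G$) genuinely starts from $0$ at the left endpoint, and then closing the linear-growth estimate for $F$ (resp. the Gronwall estimate for $G$) via Remark \ref{rem:FXG} together with the $H^{s_2}$-boundedness of $X$. Everything else reduces to the algebra property of $H^{s_2}$, Proposition \ref{prop:ibp}, \eqref{eq:LHs}, \eqref{bdd-u}, and the first-order estimate \eqref{eq:3ests}.
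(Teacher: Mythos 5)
Your argument is correct, but it follows a genuinely different route from the paper's. You bound the increment by integrating the error equation \eqref{eq:w'} over the half-step and showing that every term of $w'$ is $O(\Delta t)$ in $H^{s_{2}}$; the only non-routine ingredient is the local bound $\| H(\si) \|_{H^{s_{2}}} \lec \Delta t$, which you obtain by transplanting the forcing-term analysis of Lemma~\ref{lem:bGWL2} to the Strang grid ($F$ vanishes on $\Om_{1,1}^{(l)}$ and grows linearly in $\ta$ through $F_{\ta}-KF=X$; $G$ vanishes on $\Om_{2,2}^{(l)}$ and grows linearly in $t$ through $G_{t}+(vG)_{x}=-X$ and Gronwall), together with the term-by-term bound $\| X \|_{H^{s_{2}}} \lec 1$, which indeed holds since $s_{2}+1+p \le s$. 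The paper proceeds differently: it introduces the auxiliary exact solution $V$ of the full equation \eqref{eq:DBOv} launched from $v(t_{l-1},t_{l-1})$ and splits $w(t)-w(t_{l-1})$ as in \eqref{eq:tri2}; the piece $v(t,t)-V(t)$ is $O((\Delta t)^{2})$ by the Taylor-expansion argument of Lemma~\ref{lem:small} (the zeroth- and first-order terms cancel because $v$ and $V$ agree to first order at $t_{l-1}$), while the remaining pieces equal $\int_{t_{l-1}}^{t}(V_{t}-u_{t})\,dt'$ and are bounded by $\Delta t \, \sup \| V-u \|_{H^{s_{2}'}} \lec (\Delta t)^{2}$ via \eqref{eq:tri3}. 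Both proofs rest on the same external input, namely the first-order estimate $\| w \|_{H^{s-\max\{1,p\}}} \lec \Delta t$ from Theorem~\ref{thm:L} and Remark~\ref{rem:rev} (you use it for the $(uw)_{x}$ and $Kw$ terms; the paper uses it for $\| w(t_{l-1}) \|_{H^{s_{2}'}}$ in \eqref{eq:tri3}). Your version avoids the auxiliary flow $V$ and Lemma~\ref{lem:small} entirely, at the cost of redoing the Godunov forcing-term estimate locally on each half-step; the paper's version reuses Lemma~\ref{lem:small} and needs no bound on $H$ itself over the half-step. Either way the constants depend only on $C_{0}$, $C_{1}'$, $s$, $s_{2}$, $T$, as required.
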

\begin{proof}
We have (B) in the same manner as (A), so we only prove (A).
Let $V(t,x)$ satisfy
\begin{equation}
\begin{cases}
\partial_{t} V - V V_{x} + K V = 0, \hs{15pt}  (x,t) \in \R \times [t_{l-1},T],\\
V (\cdot, t_{l-1}) = v(\cdot,t_{l-1},t_{l-1}) \in H^{s}(\R).
\end{cases}
\label{eq:DBOv}
\end{equation}
Note that the first equations in \eqref{eq:DBO} and \eqref{eq:DBOv} are the same, and we may also apply \eqref{bdd-u} to \eqref{eq:DBOv}. 
First, we decompose $ w(t) - w(t_{l-1})$ as
\begin{align}
&w(t) - w(t_{l-1}) \nonumber \\
&\quad = \{ v (t) - V(t) \} + \{ V(t) - v(t_{l-1},t_{l-1}) \} - \{ u(t) - u(t_{l-1}) \}.
\label{eq:tri2}
\end{align}
We have the second-order approximation in $\Delta t$ from the first term in \eqref{eq:tri2} by Lemma \ref{lem:small}.
The second and the third terms are
\begin{align}
&\{ V(t) - v(t_{l-1},t_{l-1}) \} - \{ u(t) - u(t_{l-1}) \} \nonumber \\
&\quad = \int_{t_{l-1}}^{t} \{ V_{t}(t') - u_{t}(t') \} d t' \nonumber \\
&\quad = \int_{t_{l-1}}^{t} \{ V(t') V_{x}(t') - K V(t') - u(t') u_{x}(t') + K u(t') \} d t' \nonumber \\
&\quad = \int_{t_{l-1}}^{t} \{ V(t')(V(t') - u(t'))_{x} + (V(t') - u(t')) u_{x}(t') - K (V - u)(t') \} d t'. \nonumber
\end{align}
Let $s_{2}' = s_{2}+ \max \{  1,p \} (= s - 2 \max \{  1,p \})$. Since $\| V \|_{H^{s_{2}}} \leq C$ and $\| u_{x} \|_{H^{s_{2}}} \leq C_{0}$, we have
\begin{align}
&\Big{\|} \{ V(t) - v(t_{l-1},t_{l-1}) \} - \{ u(t) - u(t_{l-1}) \} \Big{\|}_{H^{s_{2}}} \nonumber \\
&\quad \leq C \int_{t_{l-1}}^{t} \| V(t') - u(t') \|_{H^{s_{2}'}} d t' \leq C \Delta t \sup_{t \in [t_{l-1},t_{l-1/2}]} \| V(t) - u(t) \|_{H^{s_{2}'}}. \nonumber
\end{align}
$\| V(t) - u(t) \|_{H^{s_{2}'}}$ is estimated from \eqref{eq:DBO}, \eqref{bdd-u}, \eqref{eq:DBOv}, Theorem \ref{thm:L}, and Remark \ref{rem:rev} as
\begin{align}
&\| V(t) - u(t) \|_{H^{s_{2}'}} \nonumber \\
&\quad \leq \| V(t) - v(t_{l-1},t_{l-1} ) \|_{H^{s_{2}'}} + \| w(t_{l-1}) \|_{H^{s_{2}'}} + \| u(t_{l-1}) - u (t) \|_{H^{s_{2}'}} \nonumber \\
&\quad \leq \int_{t_{l-1}}^{t} \| V'(t') \|_{H^{s_{2}'}} d t' + C \Delta t + \int_{t_{l-1}}^{t} \| u'(t') \|_{H^{s_{2}'}} d t' \leq C \Delta t.
\label{eq:tri3}
\end{align}
Thus, we have the second-order approximation in $\Delta t$ from the second and the third terms in \eqref{eq:tri2}. Therefore we have Lemma \ref{lem:dif2} (A).
\end{proof}

Finally, we prove (\ref{eq:dif}).
\begin{proof}
Let $l \in \N$. By Lemma \ref{lem:dif2}, it follows that for $t \in [t_{l-1} , t_{l - 1/2}] \subset [0, t_{n-1/2}]$,
\begin{align}
&\| \ti{w}(t) - w(t) \|_{H^{s_{2}}} \nonumber \\
&\leq \| w(t + \Delta t /2) - w(t_{l - 1/2}) \|_{H^{s_{2}}} + \| w(t_{l - 1/2}) - w(t_{l-1}) \|_{H^{s_{2}}} + \| w(t_{l-1}) - w(t) \|_{H^{s_{2}}} \nonumber \\
&\leq C (\Delta t)^{2}, \nonumber
\end{align}
and for $t \in [t_{l - 1/2} , t_{l}] \subset [0, t_{n-1/2}]$,
\begin{align}
&\| \ti{w}(t) - w(t) \|_{H^{s_{2}}} \nonumber \\
&\leq \| w(t + \Delta t /2) - w(t_{l}) \|_{H^{s_{2}}} + \| w(t_{l}) - w(t_{l - 1/2}) \|_{H^{s_{2}}} + \| w(t_{l - 1/2}) - w(t) \|_{H^{s_{2}}} \nonumber \\
&\leq C (\Delta t)^{2}. \nonumber
\end{align}
\end{proof}
\subsection*{Acknowledgements}
The author would like to thank his advisor, Kotaro Tsugawa, for suggesting the problem and for all the guidance and encourarement along the way

\end{section}

\end{document}